\documentclass[11pt]{amsart}

\usepackage[english]{babel}
\usepackage[letterpaper,top=3cm,bottom=3cm,left=3cm,right=3cm,marginparwidth=1.75cm]{geometry}
\usepackage{amsmath,amssymb,amsthm}
\usepackage{graphicx}
\usepackage[colorlinks=true, allcolors=blue]{hyperref}
\usepackage{xcolor}
\usepackage{comment}
\usepackage{enumerate}

\usepackage{tikz}
\usetikzlibrary{trees,calc}

\theoremstyle{definition}
\newtheorem{definition}{Definition}[section]

\newtheorem{remark}[definition]{Remark}

\theoremstyle{plain}
\newtheorem{theorem}{Theorem} 
\newtheorem{lemma}[definition]{Lemma} 
 
\newtheorem{proposition}[definition]{Proposition}

\numberwithin{equation}{section}

\newcommand\pa{\partial}
\newcommand\Ga{\Gamma}
\newcommand\ga{\gamma}
\newcommand\gm{\gamma}
\newcommand\Om{\Omega}

\title{An inverse problem for semilinear wave equations on metric tree graphs}

\author{Sergei Avdonin}
\address{Department of Mathematics and Statistics, University of Alaska at Fairbanks, Fairbanks, Alaska, USA, and Moscow Center of Fundamental and Applied Mathematics of Lomonosov Moscow State University }
\email{saavdonin@alaska.edu}

\author{Matti Lassas}
\address{Department of Mathematics and Statistics, University of Helsinki, Helsinki, Finland}
\email{matti.lassas@helsinki.fi}

\author{Jinpeng Lu}
\address{Department of Mathematics and Statistics, University of Helsinki, Helsinki, Finland}
\email{jinpeng.lu@helsinki.fi }

\author{Medet Nursultanov}
\address{Department of Mathematics and Statistics, University of Helsinki, Helsinki, Finland}
\email{medet.nursultanov@gmail.com}

\author{Lauri Oksanen}
\address{Department of Mathematics and Statistics, University of Helsinki, Helsinki, Finland}
\email{lauri.oksanen@helsinki.fi}

\subjclass[2020]{35R30, 35L71, 34B45}
\keywords{Inverse problem, semilinear wave equation, metric graph, Dirichlet-to-Neumann map, geometric optics}

\begin{document}
\maketitle

\vspace{-5mm}

\begin{abstract}
We study the inverse problem for a semilinear wave equation on metric tree graphs. From the Dirichlet-to-Neumann map defined at all but one of the boundary vertices, we recover unknown connectivity of the graph, lengths of the edges, the time-independent potential and the time-dependent coefficient of the nonlinear term of the equation.
\end{abstract}

\section{Introduction}

Let $\Omega=\Omega(V,E)$ be a finite connected metric graph with the set of vertices $V$ and the set of edges $E$. 
We consider the following semilinear wave equation on the metric graph $\Omega$ up to time $T>0$,
%with real-valued time-independent potential $q$ and time-dependent coefficient function $a$:
\begin{equation}\label{eq-wave-nonlinear-general}
    \begin{cases}
    (\partial_{t}^2 -\partial_x^2 ) u(x,t)+q(x,t) u(x,t)+a(x,t)u^3 (x,t)=0, &x\in \Omega\setminus V, \ t \in (0,T),\\
    u(x,t)|_{t \leq 0}=0, & x\in \Omega,
    \end{cases}
\end{equation}
with given boundary conditions on a subset of vertices $\Gamma\subset V$ and matching conditions on the other vertices $V\setminus \Gamma$. 
We are interested in the following inverse problem.

\smallskip
\emph{Inverse Problem.}
Given the Dirichlet-to-Neumann map for the equation \eqref{eq-wave-nonlinear-general} on a subset of vertices $\Gamma\subset V$ up to time $T$, is it possible to determine the graph structure (i.e., the connectivity and lengths of edges) and the functions $a,q$?

\smallskip
Clearly one can only hope to solve this problem up to isometry of the graph, and only recover the functions $a,q$ in some time domain depending on $T$.
%Outside of such examples, counterexample is not known to us.
As the first step, we study this problem on metric trees,
i.e. graphs without cycles.
The goal of this paper is to solve the inverse problem for metric trees with the Dirichlet-to-Neumann map defined on the set of all but one of the leaves, in the case that the potential $q$ is time-independent.

\subsection{Inverse problems for linear and nonlinear equations on metric graphs.}  Inverse theory of network-like structures is an important part of the rapidly developing area of
applied mathematics -- analysis on graphs with a variety of important applications to many problems of science and engineering. The known results in this direction concern almost exclusively linear differential equations defined on trees and periodic lattices.

A variety of inverse spectral problems on metric graphs were studied 
in many papers, see e.g. books \cite{BerkolaikoKuchment}, \cite{Kur2023}, survey \cite{Yurko2016} and references therein. Dynamical inverse problems on metric trees were studied, e.g. in \cite{AK08, avbell15, AE}.
%A discrete version of the inverse conductivity problem on tree graphs was studied in \cite{GR}.
  In particular, the most significant result of \cite{AK08} developed a constructive and robust procedure for recovering tree's parameters, which became known as the  leaf-peeling  (LP)  method.
  This method was extended to inverse boundary problems for various types of partial differential equations (PDEs) on trees
  in a series of subsequent papers, see, e.g. \cite{avbell15, ACLM, AMN, AZ21}.
  The proposed identification procedure is  recursive, and it allows
  recalculating efficiently the inverse data from the
  original tree to the smaller trees, ``pruning'' leaves step by step up to the rooted edge. 
  A discrete version of the inverse conductivity problem was studied on tree graphs \cite{GR} and on (multi-dimensional) square lattices \cite{CM,DJ}.
  Inverse problems on (perturbed) periodic lattices are studied in e.g. \cite{Ando13,AIM,ABE,IK,IM,LGGY,WYB}, and recently in \cite{BEILL} based on a different approach developed in \cite{BILL,BILL2}.
  
Nonlinear equations on graphs naturally appear in several fields of sciences, such as condensed matter physics, hydrodynamics, nonlinear waveguides and optic fibers, see e.g. \cite{Adami,Sobirov} and references therein.
  Initial boundary value problems and control problems for nonlinear PDEs on metric graphs were studied in several works, see e.g. \cite{mehmeti1994nonlinear,GLL}.  
To the best of our knowledge, we are not aware of papers concerning inverse problems for nonlinear differential equations on metric graphs. In the present paper we study inverse problems for nonlinear equations on trees, and plan to consider general metric graphs in subsequent papers. We use the ideas for solving inverse problems for nonlinear PDEs on manifolds developed in \cite{KLU} and
  the ideas of leaf-peeling method or, more exactly, its ``leaf-cleaning'' version \cite{avdonin2015source}.

\subsection{Main result}

Let $\Omega=\Omega(V,E)$ be a finite, connected graph, where $V,E$ are the sets of vertices and edges of $\Omega$ respectively.  Each edge in $E$ is associated with two vertices in $V$ called its endpoints.  We assume that $V = \{v_i: i \in I\}$ and $E= \{e_j: j \in J\}$ are nonempty;  $I$ and $J$ are subsets of $\mathbb{N}.$ 
An edge $e_j$ between $v_i$ and $v_k$ is denoted as $e_j(v_iv_k)$.  The set of indices of the edges  incident to $v_i$ is denoted by $J(v_i)$.  For trees,  the set of all leaves (vertices of degree $1$), denoted by  $ \Gamma:=\{v_i \in  V: |J(v_i)|=1\}, $  plays the role of the graph boundary.
Here, we use $|\cdot|$ to denote the cardinality of a set of features on a graph. 
We recall that a graph is called a {\bf metric graph} if every edge $ e_j \in E$  is identified with an interval $(0,l_j) $ of the real line with a positive length $l_j$.

To describe our result on inverse problem for trees, we assume that controls and observations are available at all leaves in $\Gamma$ except one leaf $\gamma_0$ where the Dirichlet condition is imposed.
Namely, we set 
\begin{equation}
\Gamma_0:=\Gamma\setminus \{\gamma_0\},
\end{equation}
and consider the equation
\begin{equation}\label{eq-wave-nonlinear}
    \begin{cases}
    (\partial_{t}^2 -\partial_x^2 ) u(x,t)+q(x) u(x,t)+a(x,t)u^3(x,t)=0, &x\in \Omega\setminus V, \ t \in (0,T),\\
    u(\cdot,t)|_{\Gamma_0}=f(t),\quad u(\gamma_0,t)=0 & t \in (0,T),\\
    %u(\gamma_0,t)=0, & \gamma_0\in \Gamma, \, t\in (0,T),\\
    u(x,t)|_{t \leq 0}=0, &  x\in \Omega, \\
%          u(x,0) = 0, \ \partial_t u(x,0) = 0, &x\in \Omega,\\
          u \textrm{ subject to the Kirchhoff-Neumann condition on } V\setminus\Gamma.
    \end{cases}
\end{equation}
The Kirchhoff-Neumann condition states the continuity of the solution $u$  at each vertex 
and
that the sum of the normal derivatives of $u$  in the direction outwards the vertex is zero; a precise formulation is given in \eqref{s-d}.
Note that the Kirchhoff-Neumann condition at the vertices of degree $1$ reduces to the usual Neumann condition.

We introduce the Dirichlet-to-Neumann map on $\Gamma_0$ by the rule
\begin{equation}\label{DN-nonlinear}
\Lambda_T f = \partial u^f|_{\Gamma_0 \times (0,T)},
\end{equation}
where $u^f$ is the solution of the problem \eqref{eq-wave-nonlinear}.
Note that we will only use the Dirichlet-to-Neumann map for small sources $f$ (in certain norms, depending on $T$), in which case the semilinear equation \eqref{eq-wave-nonlinear} has a unique solution, see Lemma \ref{lemma_for_direct_problem}.
Observe that the wave observations cannot distinguish vertices of degree $2$, at which the wave behavior is not altered by the Kirchhoff-Neumann condition at the vertex.

We write $C^1(\Omega\times[0,T])$ for the space of functions that are continuous on $\Omega\times[0,T]$, and are $C^1$ on each edge-time rectangle $[0,l_j]\times[0,T]$ for every $e_j\in E$.
In \eqref{eq-wave-nonlinear}  we assume that
\begin{equation} \label{a-q-C2}
a\in C^1(\Omega\times[0,T]).
\end{equation}
We prove that the Dirichlet-to-Neumann map \eqref{DN-nonlinear} on $\Gamma_0$ up to time $T$ for the semilinear wave equation \eqref{eq-wave-nonlinear} determines the tree structure and coefficients $a,q$ up to some time, assuming $T$ is sufficiently large.

%The assumption $q\in C^1$ is needed for remainder estimate in geometric optics, see \eqref{eq-remainder-estimate}.
%The assumption $a\in C^1$ is needed in Section \ref{direct}.
\begin{theorem} \label{main-tree}
Let $\Omega$ be a finite metric tree and $\Gamma$ be the set of all leaves of $\Omega$. 
Let $\gamma_0$ be one leaf and denote $\Gamma_0=\Gamma\setminus \{\gamma_0\}.$ 
Suppose that we are given the Dirichlet-to-Neumann map $\Lambda_T$ on $\Gamma_0$ for the semilinear wave equation \eqref{eq-wave-nonlinear} up to time $T$, with $q\in C^1(\Omega)$ and $a\in C^1(\Omega \times [0,T])$.
Denote 
\begin{equation}\label{diameter-gamma0}
D(\gamma_0):=\max_{\gamma\in \Gamma_0} \, d (\gm,\, \gm_0 ),
\end{equation}
where $d$ is the distance function on the metric tree.
Assume $T>2D(\gamma_0)$. Then $\Lambda_T$ uniquely determines the metric tree structure of $\Omega$ up to vertices of degree $2$, the (time-independent) potential $q$ on $\Omega$, and the coefficient $a$ on
\begin{equation} \label{Domain_recovery}
\Big\{(x,t)\in \Omega\times [0,T]: D(\gamma_0)+d(x,\gamma_0) \leq t \leq T-D(\gamma_0)+d(x,\gamma_0) \Big\}.
\end{equation}
\end{theorem}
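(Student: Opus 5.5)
We reduce the problem to a linear one and then treat the nonlinear term by higher-order linearization. First we show, using the small-data well-posedness of Lemma \ref{lemma_for_direct_problem}, that the map $\epsilon\mapsto\Lambda_T(\epsilon f)$ is smooth near $\epsilon=0$. The first derivative $\partial_\epsilon\Lambda_T(\epsilon f)|_{\epsilon=0}$ is the linear Dirichlet-to-Neumann map $\Lambda_T^{\rm lin}$ for $(\partial_t^2-\partial_x^2+q)v=0$ with the same boundary conditions. The cubic term first contributes at order three, so the first derivative sees only $q$.

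Given $\Lambda^{\rm lin}_T$ on $\Gamma_0$ with $T>2D(\gamma_0)$, we recover the tree structure up to degree-$2$ vertices and the potential $q$ by the leaf-peeling (leaf-cleaning) method of \cite{AK08,avdonin2015source}. The steps are as follows.
\begin{enumerate}[(i)]
\item The diagonal response at a leaf $\gamma\in\Gamma_0$ gives $q$ on the leaf edge, via the Gel'fand--Levitan/boundary control argument, and gives the edge length from the first reflection time.
\item The off-diagonal responses give $d(\gamma,\gamma')$ and hence the branching structure.
\item Leaves are then peeled off by recomputing the response operator on the subtree, which costs time equal to twice the edge lengths.
\end{enumerate}
Repeating toward $\gamma_0$ exhausts the tree. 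The condition $T>2D(\gamma_0)$ is exactly what guarantees that round trips reach every edge, including the edge at $\gamma_0$, where the Dirichlet reflection is seen.

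Next we recover $a$ from the third linearization. Write $u=\epsilon_1 v_1+\epsilon_2 v_2+\epsilon_3 v_3+\dots$ with $f=\sum\epsilon_j f_j$. Then
\[
w:=\partial_{\epsilon_1}\partial_{\epsilon_2}\partial_{\epsilon_3}u|_{\epsilon=0}
\]
solves $(\partial_t^2-\partial_x^2+q)w=-6a\,v_1v_2v_3$ with zero data. Let $v_0$ be a solution of the adjoint, time-reversed linear problem with final data at $T$ and boundary source $h$ on $\Gamma_0$. Integrating by parts on each edge, with the Kirchhoff terms cancelling, gives
\[
\int_0^T\!\!\int_\Omega a\,v_0v_1v_2v_3\,dx\,dt=c\int_0^T\!\!\sum_{\Gamma_0}h\,\partial_{\epsilon_1\epsilon_2\epsilon_3}\Lambda_T(\epsilon f)\,dt .
\]
So the integrals of $a\,v_0v_1v_2v_3$ are known for all such solutions. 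Since $q$ and the tree are already known, each $v_j$ can be computed.

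The main step is to choose the $v_j$ so that the product localizes at a given point $(x_0,t_0)$ in the set \eqref{Domain_recovery}. We use geometric-optics (progressing-wave) solutions in $1+1$ dimensions. A boundary pulse $f=\chi_\tau(t-s)$ sent from a leaf travels along the unique path toward $x_0$. At vertices it splits into transmitted and reflected pulses with explicitly computable coefficients, plus a $C^1$ remainder of size $O(\tau)$ that comes from $q$.

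Sending pulses from two leaves, or from one leaf at two times combined with the Dirichlet reflection at $\gamma_0$, makes the right- and left-moving waves cross at $(x_0,t_0)$. Taking $v_1=v_2$ rightgoing and $v_3=v_0$ leftgoing, the product concentrates near $(x_0,t_0)$ and also near other crossing points of the scattered pulses. The product of pulse profiles yields an approximate delta, $\tau^{-2}\!\int a\,v_0v_1v_2v_3\to c\,a(x_0,t_0)$, provided we can separate the spurious crossings. We plan to separate them by varying the send times: the spurious crossings then move, or lie at earlier times where $a$ is already known inductively in $t$.

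Reaching $x$ requires the waves to originate from leaves in $\Gamma_0$, which takes time at least $d(x,\gamma_0)$ after a round trip of $D(\gamma_0)$ — the pulse must come from the farthest leaf in the worst case. The return of the leftgoing (adjoint) wave needs a similar margin before $T$. This accounts for the bounds $D(\gamma_0)+d(x,\gamma_0)\le t\le T-D(\gamma_0)+d(x,\gamma_0)$.

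The hardest step is controlling the multiple reflections on the tree so that only one crossing contributes. For this we use the finite speed of propagation, so that only finitely many scattered pulses reach the support window, and the nonvanishing of the transmission coefficients.
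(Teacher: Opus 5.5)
Your framework matches the paper's. The first linearization gives $\Lambda_T^{\rm lin}$, and hence the tree and $q$ by leaf peeling. The third linearization gives the identity \eqref{a-integral}. Then $a$ is localized where a right-going pair crosses a left-going line shared by a forward wave and the time-reversed $v_0$. Using narrow real pulses instead of oscillatory sources is fine: you only need the sup-norm bound $O(\tau)$ on the remainder, which is what holds (its $C^1$ norm is in general $O(1)$ at the front).

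The gap is that the step you defer as ``the hardest'' is the actual content of the theorem on a tree, and the mechanism you sketch for it runs in the wrong direction. Look at the edge $e_0$ incident to $\gamma_0$, parametrized by arclength from the source leaf. A forward wave can move away from $\gamma_0$ on $e_0$ only after reflecting at $\gamma_0$, so your left-going pair sits on one line $t+x=c$. The extra crossings come from the delayed copies of $v_1=v_2$ that re-enter $e_0$ after reflections in the branches behind. These travel on lines $t-x=s+\delta$ with $\delta>0$, so they cut $t+x=c$ nearer the source, i.e.\ at \emph{later} times than the target. Hence even the points of the lowest line $t=D(\gamma_0)+d(x,\gamma_0)$ of \eqref{Domain_recovery} near $\gamma_0$ require values of $a$ at later times on the same line, and an induction upward in $t$ has no base case. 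Merely moving the crossings by varying send times does not split a sum $\sum_k c_k\,a(p_k(s))$ of unknown point values either. Crossings on the other branches of the tree are not addressed at all.

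What is missing is the paper's spatial layer stripping, which goes as follows.
\begin{enumerate}[(i)]
\item First recover $a$ on the edges at leaves of $\Gamma_0$. By Proposition \ref{star-diff}, off $e_z$ the supports of $v_0$ and $v_2$ are disjoint once $2b<s-t_0$, so a single crossing remains.
\item For each sheaf or subtree with exit vertex $z_0$, send all four waves from the leaf $\gamma_1$ farthest from $z_0$. Recover $a$ along $t+x=2L+t_0$ by lowering the delay $s$ from near $2L$. Reflected copies of $v_2$ re-enter $e_0$ with delay at least $\hat s=2\min_j l_j>0$. So at each stage every extra crossing lies either on the part of the same line recovered at earlier stages, or on edges treated before. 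On those edges, the maximality of $d(\gamma_1,z_0)$ places the crossing inside the domains \eqref{proof-domain-ej}, \eqref{proof-domain-e0} already recovered.
\item Skip the finitely many delays for which $v_2$ hits an isolated overlap point of $v_0$ and $v_1$, or a slope-one reflection line inside the source sheaf, and fill these in by continuity.
\item Vary $t_0\in(0,T-2L)$, use Lemma \ref{subtree} to ignore what is attached beyond $z_0$, and march toward $\gamma_0$.
\end{enumerate}
This ordering, together with the farthest-leaf choice, is what makes all but one crossing known. It is also what produces the domain \eqref{Domain_recovery}, which your time-counting argument only motivates.
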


\begin{remark}
The domain \eqref{Domain_recovery} in Theorem \ref{main-tree} is not the maximal domain that the given data can recover, especially on the edges incident to the boundary vertices in $\Gamma_0$, but it is a simple formulation valid for the whole graph, see Figure \ref{fig-3-layers}.
%On the edges incident to the boundary vertices in $\Gamma_0$, the domains of recovery are in general much larger given by Proposition \ref{star-diff}.
For the simple case of interval $[0,L]$, the domain of recovery \eqref{Domain_recovery} for the coefficient $a$ is illustrated in Figure \ref{fig_domain_interval}.
%Effectively, the domain \eqref{Domain_recovery} is the same as the domain of recovery on the interval $[0,D(\gamma_0)]$.
\end{remark}

\begin{figure}[h]
\includegraphics[width=0.43\linewidth]{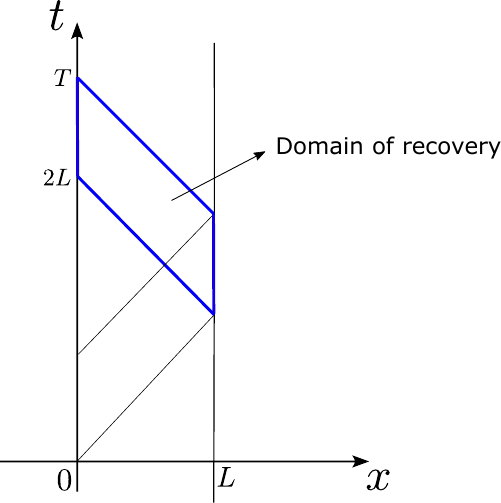}
\caption{As a special case of Theorem \ref{main-tree}, the reconstruction of the time-dependent coefficient $a$ on the interval $[0,L]$. Suppose control and observation of waves are available at only one endpoint $x=0$, i.e., $\Gamma_0=\{0\}$ and $\gamma_0=\{1\}$. Then $\Lambda_T$ uniquely determines $a$ on the domain \eqref{Domain_recovery}, the one enclosed by the blue lines.}
\label{fig_domain_interval}
\end{figure}

 Since the first linearization of $\Lambda_T$ is the Dirichlet-to-Neumann map for the linear equation $(\partial_t^2-\partial_x^2)v+qv=0$ (see Lemma 2.2 below), the first part of Theorem \ref{main-tree} (recovery of the metric tree structure and time-independent potential) follows from known results, e.g. \cite{AK08, AZ21}. Thus, we assume the metric tree structure and $q$ are known from now on and focus on the second part of the theorem: the recovery of the time-dependent coefficient $a$. 
 The basic idea is to use the linearization method developed in \cite{KLU} for manifolds. 
The main difficulty in the discrete case is that waves can interact with their reflections at the internal vertices in a complicated way due to general configurations of the graph, which may cause certain information to be hidden from wave observations.
In the case of metric trees, we are able to construct a general procedure in this paper so that each edge can be isolated from wave observations.
 We note that our method also works for nonlinearity $u^m$ with $m\geq 3$.

\smallskip
 The paper is organized as follows. In Section \ref{prel} we introduce metric graphs and discuss an inverse problem for the linear wave equation.
In Section \ref{sec-CGO} we study the geometric optic solutions for the linear wave equation on metric graphs. Section \ref{sec-linearization} is devoted to linearization techniques and presents the solution on our inverse problem for the simplest graphs. Section \ref{sec-general} completes the solution to the inverse problem for the semilinear wave equation on metric trees. 
Finally, Section \ref{direct} contains the results concerning the direct problem for the semilinear wave equation on metric graphs.

\section{Preliminaries and inverse problems for linear wave equation} \label{prel}
\subsection{Metric graphs and Hilbert spaces on graphs}

The graph $ \Omega $ determines naturally the Hilbert space of square
integrable functions $ \mathcal{H}=L^2 (\Omega). $ We define the space  $\mathcal{H}^1$
of  continuous functions $y$ on $\Omega$ such that
$y_j:=y|_{e_j} \in H^1(e_j)$ for every $j \in J$ and denote by  $\mathcal{H}^1_0$ the subspace $\{y \in  \mathcal{H}^1 \,: y|_{\Gamma}=0\}.$ 
Let $\mathcal{H}^{-1}$ be the dual space of $\mathcal{H}^1_0$.
We further introduce the space $ \mathcal{H}^2$ of functions $y \in \mathcal{H}^1$  such that
$y_j \in H^2(e_j)$ for every $j \in J,$ and
for every $v \in V \setminus \Gamma$, the equalities
\begin{equation}\label{s-d}
\sum_{j \in J(v)} \partial y_j(v)=0; \quad y_j(v)=y_k(v), \ j,k \in J(v),
\end{equation}
holds. Here (and everywhere later on)
$\partial y_j(v)$ denotes the
derivative of $y$ at the vertex $v$ taken along the  edge  $e_{j}$
in the direction outwards the vertex.
Vertex conditions \eqref{s-d}  are called the {\bf standard}
or {\bf Kirchhoff-Neumann} (KN) conditions. For the boundary vertices they are just Neumann conditions.

Let $q$ be a real valued function (potential) such that $q|_{e_j} \in C[0,l_j]$. We define the Schr\"odinger operator on the graph $\Om$ as the operator 
\begin{equation} \label{sch}
\, \mathcal{L}  = - \frac{d^2}{dx^2} + q \,
\end{equation}
in $ \mathcal{H} $ with the domain $\mathcal{H}^2 \cap \mathcal{H}^1_0.$

 We now associate the following initial boundary value problem (IBVP) for the wave equation to the graph $\Omega$: 
 \begin{equation}\label{eq-wave-linear-KN}
    \begin{cases}
    (\partial_{t}^2 -\partial_x^2 ) u(x,t)+q(x) u(x,t)=0, &x\in \Omega \setminus V, \  t > 0,\\
    \sum_{j \in J(v)}  \partial u_j(v,t)=  0, & v \in V \setminus \Gamma, \ t > 0, \\ 
    u_j(v,t)-u_k(v,t)=0, &  j,k \in J(v), \ v \in V \setminus \Gamma, \  t > 0, \\
    u(\cdot,t)|_{\Gamma}=f(t), &  t > 0,\\
          u(x,0) = 0, \ 
      \partial_t u(x,0) = 0, &x\in \Omega.\\
    \end{cases}
\end{equation}
 This IBVP was studied by many authors.   
For different conditions on the potential $q$ 
the proof of the following proposition can be found in  \cite{mehmeti1994nonlinear}, \cite{lagnese1994analysis}, \cite{dager2006wave}, \cite{avdonin2015source},
and \cite{AZ}. 
\begin{proposition} \label{prop:unispace} 
	Let $m:=|\Gamma|, \, {f} \in \mathcal{F}^T: = L^2([0,T]; \mathbb{R}^m)$ and
	 $q \in L^1(\Omega).$ There exists a unique solution $	u =u^f(x,t)$ to the IBVP \eqref{eq-wave-linear-KN} (understood in a weak sense)  and
		\begin{equation} \label{inclusion}
u \in C([0,T]; \mathcal{H}) \cap C^1([0,T]; \mathcal{H}^{-1}),
	\end{equation}
		\noindent that is, $u(\cdot, t) \in \mathcal{H}$, $\partial_t u(\cdot, t) \in \mathcal{H}^{-1}$ for all $t \in [0,T]$, and both $u(\cdot, t)$ and $\partial_t u(\cdot, t)$ are continuous with respect to $t$ in  corresponding norms.
  If $f \in \mathcal{F}^T_1: = H^1([0,T]; \mathbb{R}^m), \, f(0)=0,$ the solution to the IBVP \eqref{eq-wave-linear-KN} $u(\cdot,t)$ belongs to the energy space for all $t \in [0,T]$: 
  \begin{equation} \label{energy}
u \in C([0,T]; \mathcal{H}^1) \cap C^1([0,T]; \mathcal{H}).
	\end{equation}
  If ${f} \in C^2([0,T]; \mathbb{R}^m)$, ${f}(0)={f}'(0)={0}$ and $q \in C(\Omega)$, the IBVP \eqref{eq-wave-linear-KN} has a unique classical solution. 
\end{proposition}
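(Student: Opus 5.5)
We would prove Proposition \ref{prop:unispace} edge by edge, reducing it to the standard well-posedness theory for the one-dimensional wave equation together with the variational (Lions) framework on the graph. For the energy statement \eqref{energy}, take $f\in\mathcal{F}^T_1$ with $f(0)=0$. We lift the boundary data: choose a fixed function $\phi_\gamma\in\mathcal{H}^2$ for each leaf $\gamma$, equal to $1$ at $\gamma$, vanishing at the other leaves, and supported near the leaf edge. Then write $u=w+\sum_\gamma f_\gamma(t)\phi_\gamma$. The new unknown $w$ solves the homogeneous Dirichlet problem $w_{tt}+\mathcal{L}w=F$, with zero data and
\[
F=-\sum_\gamma\bigl(f_\gamma''\phi_\gamma+f_\gamma\mathcal{L}\phi_\gamma\bigr).
\]

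The first step is to show that $\mathcal{L}$, with domain $\mathcal{H}^2\cap\mathcal{H}^1_0$, is self-adjoint and bounded below. Its form is
\[
\int_\Omega |y'|^2+q|y|^2\,dx \quad\text{on } \mathcal{H}^1_0 .
\]
For $q\in L^1$ this form is closed and bounded below, by the one-dimensional estimate $\|y\|_\infty^2\le \varepsilon\|y'\|^2+C_\varepsilon\|y\|^2$ applied on each edge. Integrating by parts edge by edge, the vertex boundary terms cancel exactly because of the KN conditions \eqref{s-d}; this identifies the form operator with $\mathcal{L}$. For $q\in L^1$ the domain is understood in the form sense. Since $\mathcal{H}^1_0\hookrightarrow\mathcal{H}$ compactly, the spectral theorem (a cosine family) gives existence, uniqueness and the energy estimate for $w$ whenever $F\in L^1(0,T;\mathcal{H})$.

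The lifting demands $f\in H^2$, whereas we only have $f\in H^1$. We handle this by first proving the energy estimate $\|u\|_{C(\mathcal{H}^1)\cap C^1(\mathcal{H})}\le C\|f\|_{H^1}$ for smooth $f$ and then extending by density. For the estimate itself, the boundary trace of $u_x$ is controlled using the 1D multiplier $(x-x_0)\chi(x)\,u_x$ on the leaf edges (the hidden-regularity argument), or alternatively via d'Alembert's formula near each leaf.

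For \eqref{inclusion} with $f\in L^2$ we argue by transposition. We define $u$ by duality against solutions of the adjoint problem with right-hand side in $L^1(0,T;\mathcal{H}^1_0)$. The hidden regularity estimate $\|\partial\varphi|_\Gamma\|_{L^2(0,T)}\le C\|G\|$ for the adjoint solution $\varphi$ then yields $u\in C([0,T];\mathcal{H})\cap C^1([0,T];\mathcal{H}^{-1})$. For the classical statement, with $f\in C^2$, $f(0)=f'(0)=0$ and $q$ continuous, we use characteristics. On each edge we rewrite the equation as a Volterra integral equation via d'Alembert's formula with the $qu$ term as a source. At each interior vertex the continuity and KN conditions determine the transmitted and reflected waves, giving a reflection coefficient $2/\deg(v)-1$ and transmission coefficient $2/\deg(v)$. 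By finite propagation speed we iterate over time steps shorter than the minimal edge length, and a Picard iteration in $C^2$ converges on each step. The compatibility conditions $f(0)=f'(0)=0$ ensure that no singularity is created at the corner $t=0$.

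We expect the main obstacle to be the hidden regularity (trace) estimate at the leaves, which is needed for the low-regularity transposition argument. Since the estimate is local to a leaf edge, we would first try the 1D multiplier argument restricted to that edge with a cutoff, treating $qu$ as a perturbation via Gronwall.
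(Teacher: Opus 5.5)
The paper gives no proof of this proposition. It quotes it from the literature on wave equations on networks. Your plan is the standard route: a form-defined $\mathcal{L}$ plus lifting for $H^1$ data, transposition for $L^2$ data, and characteristics with reflection/transmission coefficients $2/\deg(v)-1$, $2/\deg(v)$ for the classical case. As written, however, two steps fail.

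First, the classical part. With only $f(0)=f'(0)=0$ the solution is in general not $C^2$. So a Picard iteration cannot converge in $C^2$, and your claim that these conditions prevent a singularity at the corner is false at second order. Take $\Omega=[0,l]$ with $q=0$, data $t^2$ at $x=0$ and $0$ at $x=l$. For $t<l$ the solution is $u=(t-x)_+^2$, and $\partial_t^2u$ jumps from $0$ to $2$ across $t=x$. Since weak solutions are unique, no $C^2$ solution exists. A $C^2$ solution also needs $f''(0)=0$: at the corner the initial data force $u_{tt}=u_{xx}-qu=0$, while the boundary data give $u_{tt}=f''(0)$. With $f(0)=f'(0)=0$ alone you get $u\in C^1$, with second derivatives continuous off finitely many characteristic segments issuing from the corners $(\gamma,0)$ and their reflections at vertices (equivalently, $u_{\xi\eta}$ continuous in characteristic coordinates). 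So either add $f''(0)=0$, or state this weaker meaning of \emph{classical} explicitly and run the iteration in $C^1$.

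Second, the hypothesis $q\in L^1(\Omega)$ is not actually handled.
\begin{itemize}
\item Your lifting puts the term $f_\gamma(t)\,q\,\phi_\gamma$ into $F$. This lies in $L^1(\Omega)$ but not in $\mathcal{H}$ unless $q$ is square integrable near the leaf, so the cosine-family result for $F\in L^1(0,T;\mathcal{H})$ does not apply.
\item In the multiplier identity, the term $\int_0^T\!\int(x-x_0)\chi\,q\,\varphi\,\varphi_x$ is not controlled by the energy, because $q\varphi_x$ need not be integrable when $q\in L^1$ and $\varphi_x\in L^2$. So treating $qu$ as a Gronwall perturbation breaks down.
\end{itemize}
The repairs are as follows.
\begin{itemize}
\item Choose $\phi_\gamma$ by Lax--Milgram with $(\mathcal{L}+\lambda)\phi_\gamma=0$ on the edges, KN conditions at interior vertices, $\phi_\gamma(\gamma)=1$ and $\phi_\gamma=0$ at the other leaves. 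Then $\mathcal{L}\phi_\gamma=-\lambda\phi_\gamma\in\mathcal{H}$. Alternatively, use that $f_\gamma q\phi_\gamma\in W^{1,1}(0,T;\mathcal{H}^{-1})$, which still yields energy regularity.
\item Prove the leaf trace estimate by your d'Alembert alternative, where $q=q(x)$ enters only through $\int|q|\le\|q\|_{L^1}$ along characteristics. Or write $q=Q'$ with $Q\in L^\infty$ and integrate by parts in the multiplier term.
\end{itemize}

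Finally, in the transposition step the adjoint right-hand sides must range over $L^1(0,T;\mathcal{H})$, not $L^1(0,T;\mathcal{H}^1_0)$. The latter only gives $u\in L^\infty(0,T;\mathcal{H}^{-1})$. Since $q$ is time-independent, you can avoid transposition altogether. Put $u^f:=\partial_t u^{g}$ with $g(t)=\int_0^t f(s)\,ds$. Then \eqref{energy} for $g\in H^1$, $g(0)=0$ gives $u^f\in C([0,T];\mathcal{H})$, and $\partial_t u^f=\partial_t^2u^g\in C([0,T];\mathcal{H}^{-1})$ follows directly from the weak formulation.
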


\subsection{Inverse dynamical and spectral problems for the IBVP \eqref{eq-wave-linear-KN}}
The {\it response operator} (dynamical Dirichlet-to-Neumann map) $R^T$ acting in $\mathcal{F}^T$ for the IBVP \eqref{eq-wave-linear-KN} is defined by the following rule:
 \begin{equation} \label{response}
(R^Tf)(t)=\partial u^f(\cdot,t)|_{\Ga}, \ \ t \in (0,T), \ \ Dom \; R^T = \mathcal{F}^T_1.
 \end{equation}
 The inverse dynamical problem is to recover the tree (connectivity of the graph $\Om$ and lengths of the edges) and the potential $q$ on the edges from the response operator $R^T$ and
determine  $T$ that allows to solve this problem.

Along with the dynamical problems, inverse spectral problems have also attracted great attention. 
Inverse spectral data is the sequence of pairs $\mathcal{S}:=\{\lambda_n,
\partial \phi_n|_{\Ga}\},\, n \in \mathbb{N},$ consisting of the eigenvalues of the operator $\mathcal{L}$
and traces of the derivatives of its eigenfunctions on the boundary of the graph. Another kind of spectral data, which is equivalent to $\mathcal{S},$ is the Titchmarsh--Weyl (TW) matrix function (spectral Dirichlet-to-Neumann map). 

The relations between different kinds of inverse data were discussed in several papers, see e.g. \cite{KKLM}, \cite{AK08}. In particular, it is known that the spectral data determine the response operator $R^T$ for all $T \geq 0$ through the Fourier--Laplace transform and vice versa. 
If the dynamical data are given on a finite time interval, we are not able to find the spectral data in such a way and we need to use dynamical methods for solving the inverse problem.

\smallskip
The following lemma allows us to solve the problem of finding $q(x)$ independently of the nonlinear problem. 
Let $\Gamma_0\subset \Gamma$, and let $\Lambda_T^{\rm lin}$ be the Dirichlet-to-Neumann map on $\Gamma_0$ for the linear equation, i.e., corresponding to the equation \eqref{eq-wave-nonlinear} with $a=0$.
\begin{lemma}
    The operator $\Lambda_T^{\rm lin}$ is determined by $\Lambda_T$.
\end{lemma}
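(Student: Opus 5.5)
The plan is to use first-order linearization. We show that $\Lambda_T^{\rm lin} f = \lim_{\varepsilon\to 0}\varepsilon^{-1}\Lambda_T(\varepsilon f)$ for every admissible $f$, say $f\in C^2([0,T];\mathbb{R}^{|\Gamma_0|})$ with $f(0)=f'(0)=0$, where the limit is taken in $L^2(\Gamma_0\times(0,T))$. Since $\Lambda_T^{\rm lin}$ is linear and continuous on such sources, and they are dense in its domain, this determines the operator. Here $\Lambda_T$ is known on all sufficiently small sources. For fixed $f$ and $|\varepsilon|$ small, the source $\varepsilon f$ is small, so by Lemma \ref{lemma_for_direct_problem} the solution $u_\varepsilon:=u^{\varepsilon f}$ exists and is unique. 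We may therefore evaluate $\Lambda_T(\varepsilon f)$ for all small $\varepsilon$.

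Next, let $v=v^f$ be the solution of the linear problem: \eqref{eq-wave-nonlinear} with $a=0$, boundary data $f$ on $\Gamma_0$, Dirichlet condition at $\gamma_0$ and Kirchhoff--Neumann conditions. Proposition \ref{prop:unispace} gives existence and uniqueness of $v$, and the argument there adapts to the mixed leaf conditions. Set $w_\varepsilon := u_\varepsilon - \varepsilon v$. Then $w_\varepsilon$ has zero boundary and initial data, satisfies the Kirchhoff--Neumann conditions, and solves
\[
(\partial_t^2-\partial_x^2+q)w_\varepsilon = -a\,u_\varepsilon^3 .
\]
The key step is an a priori estimate from the direct problem (Section \ref{direct}), presumably the one used to prove Lemma \ref{lemma_for_direct_problem}. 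It says that for small data, $\|u_\varepsilon\|_{X}\le C|\varepsilon|$ in an energy-type space $X$ embedding into $C(\Omega\times[0,T])$; on a one-dimensional graph $\mathcal{H}^1\subset C(\Omega)$, so such an $X$ is available. Hence $\|a u_\varepsilon^3\|_{L^1(0,T;\mathcal{H})}\le C|\varepsilon|^3$.

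The linear energy estimate then gives $\|w_\varepsilon\|_{C([0,T];\mathcal{H}^1)}\le C|\varepsilon|^3$. What we actually need is control of the boundary normal derivative $\partial w_\varepsilon|_{\Gamma_0}$ in $L^2(0,T)$. For this we invoke the hidden-regularity (boundary trace) estimate for the linear wave equation on graphs, $\|\partial w|_{\Gamma}\|_{L^2}\le C\|F\|_{L^1(0,T;\mathcal{H})}$ for zero data and right-hand side $F$. This can be proved edgewise by the multiplier $x\,\partial_x w$, or directly by d'Alembert's formula on each edge. It yields $\|\Lambda_T(\varepsilon f)-\varepsilon\Lambda_T^{\rm lin}f\|\le C|\varepsilon|^3$, and dividing by $\varepsilon$ and letting $\varepsilon\to0$ gives the claim.

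I expect the main obstacle to be the uniform bound $\|u_\varepsilon\|_X = O(\varepsilon)$ in a space strong enough to control $u_\varepsilon^3$ and its boundary trace. This requires the fixed-point argument of the direct problem to be quantitative, with the solution map Lipschitz near zero. I would extract it from the contraction estimate: the fixed point satisfies $\|u\|\le 2\|\varepsilon v\|$ once $\varepsilon$ is small.
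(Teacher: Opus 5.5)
Your proposal is correct and follows essentially the paper's argument. The paper also linearizes at first order, writing $\Lambda_T^{\rm lin} f=\partial_\varepsilon \Lambda_T(\varepsilon f)|_{\varepsilon=0}$ with differentiability taken from Proposition \ref{prop-diff-epsilon}; your difference-quotient version, with the $O(|\varepsilon|^3)$ remainder and the hidden-regularity bound for the normal-derivative trace, makes explicit what the paper leaves implicit. The step you flag as the main obstacle is already in the paper: Lemma \ref{lemma_for_direct_problem} gives $\|u_\varepsilon\|_{E^1}\le C|\varepsilon|\,\|f\|_{H^{3/2}}$, and estimate \eqref{est_F} is exactly your bound on $u_\varepsilon-\varepsilon v$. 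To apply that lemma literally, take $f$ vanishing near $t=0$ and $t=T$, and recover general $f$ by density and causality.
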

\begin{proof}
    Let $f$ be a boundary source. For $\varepsilon>0$, consider the equation
    \begin{equation}\label{non_lin_eq_with_eps}
    \begin{cases}
    (\partial_{t}^2 -\partial_x^2 ) u(x,t)+q(x) u(x,t)+a(x,t)u^3(x,t)=0, &x\in \Omega\setminus V, \ t \in (0,T),\\
    u(\cdot,t)|_{\Gamma}=\varepsilon f(t), & t\in (0,T),\\
    u(x,t)|_{t \leq 0}=0, & x\in \Omega, \\
%          u(x,0) = 0, \ \partial_t u(x,0) = 0, &x\in \Omega,\\
          u \textrm{ subject to the Kirchhoff-Neumann condition on } V\setminus\Gamma.
    \end{cases}
\end{equation}
Let $u_\varepsilon$ be the solution of the equation. 
Using arguments similar to those in \cite[Proposition 2]{LassasLiimatainenPotenciano-MachadoTyni}, one can show that the solution is differentiable with respect to $\varepsilon$, see Proposition \ref{prop-diff-epsilon}. Define 
\begin{equation}    \Tilde{u}:=\left.\partial_\varepsilon u_\varepsilon \right|_{\varepsilon = 0}.
\end{equation}
By differentiating \eqref{non_lin_eq_with_eps} with respect to $\varepsilon$, we obtain that $\Tilde{u}$ solves the equation
\begin{equation}\label{non_lin_eq_with_eps1}
    \begin{cases}
    (\partial_{t}^2 -\partial_x^2 ) \Tilde{u}(x,t)+q(x) \Tilde{u}(x,t)=0, &x\in \Omega\setminus V, \ t \in (0,T),\\
    \Tilde{u}(\cdot,t)|_{\Gamma}= f(t), & t\in (0,T),\\
    \Tilde{u}(x,t)|_{t \leq 0}=0, & x\in \Omega,\\
%          \Tilde{u}(x,0) = 0, \ \partial_t v(x,0) = 0, &x\in \Omega,\\
          \Tilde{u} \textrm{ subject to the Kirchhoff-Neumann condition on } V\setminus\Gamma.
    \end{cases}
\end{equation}
Then $\Lambda_T^{\rm lin}$ can be expressed as follows
\begin{equation*}
    \Lambda_T^{\rm lin} f = \left.\partial \Tilde{u} \right|_{\Gamma_0\times (0,T)} = \partial_\varepsilon \big( \Lambda_T(\varepsilon f)\big) \big|_{\varepsilon = 0}. \qedhere
\end{equation*}
\end{proof}

\section{Geometric optics on metric graphs}
\label{sec-CGO}

This section is for the explicit formula, modulo $O(h)$, for the geometric optics solution of the (linear) wave equation on metric graphs with time-independent potential function $q$. We use this solution in the next section to find the coefficient $a(x,t).$
\begin{equation}\label{eq-gauss}
    \begin{cases}
    (\partial_{t}^2 -\partial_x^2 ) v(x,t)+q(x) v(x,t)=0, &x\in \Omega \setminus V, \  t > 0,\\    v(\cdot,t)|_{\Gamma}=f(t), & t> 0,\\
          v(x,t)|_{t\leq 0}=0, &x\in \Omega,\\
    v \textrm{ subject to the Kirchhoff-Neumann condition on } V\setminus\Gamma.
    \end{cases}
\end{equation}
The boundary source of our interest is of geometric optics type:
\begin{equation*}
\exp \big(i h^{-1} \theta(t) \big) \chi_b(t),
\end{equation*}
where $h>0$ is a small parameter, and $\chi_b$ is a smooth cutoff function near $b$ with an independent parameter $b>0$, say $\chi_b=1$ on $[b/2,3b/2]$ and vanishing outside of $(0,2b)$.
%The cutoff function $\chi_b$ is shifted a little so that the initial condition of the wave equation is satisfied.
To simplify computations, we choose 
\begin{equation} \label{theta}
\theta(t)=t,
\end{equation}
and the boundary source
\begin{eqnarray} \label{source}
f_h(t) := e^{ih^{-1}t} \chi_b(t).
\end{eqnarray}

The geometric optics solution for the wave equation on the interval $[0,l]$ with the boundary source \eqref{source} at $x=0$ has the following form up to time $l$ (see, e.g. \cite[Thm. 2.64]{KKL} or \cite{NO,Salo} for details),
\begin{equation}\label{eq_v}
    v(x,t)=e^{ih^{-1}(t-x)}\chi_b(t-x) +R_h(x,t), \quad x,t\in [0,l],
\end{equation}
where the remainder $R_h$ satisfies
\begin{equation} \label{eq-remainder-estimate}
|R_h(x,t)|\leq C_b h,
\end{equation}
for some constant $C_b>0$ depending on $\chi_b$ and $\|q\|_{C^1}$.
%The constant $C_b$ depends on $\|q\|_{C^1}$. First, the first-order antasz $a_1$ in basically the integral of $q$. The remainder estimate depends on the $L^{\infty}$-norm (or $L^2$-norm) of $(\partial_t^2-\partial_x^2+q)a_1$, so depends on $C^1$-norm (or Lipschitz norm) of $q$, and also $C^1$-norm of $\chi_b$, and also on $l$ (upper bound for time domain).
%This is by taking antasz to the first order $a=a_0+h a_1+...$, and its remainder has estimate $O(h)$ (the same order as the order of antasz, see \cite{NO,Salo}). The function $a_1$ is an integral of $q \chi$ so uniformly bounded. We assumed $q\in C^0$. So absorbing $h a_1$ to the remainder estimate gives $a_0 +O(h)$.
%One gets immediately $H^1$-estimate for remainder $O(h)$. In our 1-d case, this gives $C^0$ pointwise estimate by Sobolev embedding.

\begin{lemma} \label{go-reflect}
With the Dirichlet boundary condition imposed at $x=l$, the geometric optics solution $v(x,t)$ on the interval $[0,l]$, with the boundary source \eqref{source} at $x=0$, has the following form up to time $2l$,
\begin{equation*}\label{eq_v_reflected}
    v(x,t)=e^{ih^{-1}(t-x)}\chi_b(t-x) -e^{ih^{-1}(t+x-2l)}\chi_b(t+x-2l) +O(h), \quad x\in [0,l], \;t\in [0,2l].
\end{equation*}
\end{lemma}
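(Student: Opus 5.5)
The plan is to build an explicit approximate solution from the incident wave \eqref{eq_v} and a reflected wave, and then show the true solution differs from it by $O(h)$ using energy estimates for the linear problem. Set
\[
w(x,t):=e^{ih^{-1}(t-x)}\chi_b(t-x)-e^{ih^{-1}(t+x-2l)}\chi_b(t+x-2l).
\]
At $x=l$ the two terms cancel exactly, so $w(l,t)=0$. At $x=0$, the second term is $e^{ih^{-1}(t-2l)}\chi_b(t-2l)$. This vanishes for $t\le 2l$ because $\chi_b$ is supported in $(0,2b)$, so $w(0,t)=f_h(t)$ for $t\in[0,2l]$. Also $w=0$ for $t\le 0$, since both arguments $t-x$ and $t+x-2l$ are nonpositive there.

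The difficulty is that $w$ solves the free wave equation, not the one with potential $q$. I would therefore not use $w$ directly but instead use the first-order geometric optics ansatz for each of the two waves. Concretely, I would take
\[
v_{\pm}=e^{ih^{-1}(t\mp x)}\bigl(\chi_b(t\mp x)+h\,a_1^{\pm}(x,t)\bigr),
\]
where the transport equations determine $a_1^{\pm}$ as integrals of $q\chi_b$ along characteristics. These are uniformly bounded, using $q\in C^1$ as in \eqref{eq-remainder-estimate}. Then $(\partial_t^2-\partial_x^2+q)v_\pm=O(h)$ in $C^0$.

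Next I would fix the boundary values. I choose the reflected correction $a_1^-$ so that its value at $x=l$ matches $-a_1^+$ there, which is possible since the transport equation for $a_1^-$ can be solved with data prescribed on $x=l$. After this choice the residual boundary mismatch at $x=l$ is zero. Near $x=0$ for $t\le 2l$, the reflected term is supported away from the boundary, so the mismatch at $x=0$ is only $h\,a_1^+(0,t)$. This is $O(h)$, and it is smooth and supported in small time.

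Write $v=v_+-v_-+r$. The remainder $r$ solves the linear equation with a source of size $O(h)$, boundary data of size $O(h)$ (with $O(1)$ time derivatives, which is what the rescaling by $h$ handles), and zero initial data. The final step is the standard energy estimate on $[0,l]\times[0,2l]$, with the Dirichlet condition at $x=l$. To avoid losing powers of $h$ from boundary data of the form $h\cdot(\text{oscillating})$, I would first subtract a lift of that data, for instance by cutting off the $h$-term with a smooth function of $x$ near $0$, as in \cite{KKL}. This gives $\|r\|_{H^1}\lesssim h$, and the one-dimensional Sobolev embedding then gives $\|r\|_{C^0}\lesssim h$. Absorbing the $h a_1^{\pm}$ terms into $O(h)$ yields the stated formula.

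The main obstacle I expect is this boundary-lift step: keeping the remainder $O(h)$ uniformly up to time $2l$, given the oscillatory boundary corrections. I would handle it with the first-order ansatz as above. An alternative is a direct Duhamel/d'Alembert representation on the interval, treating $qv$ as a perturbation, with the method of images accounting for the Dirichlet reflection.
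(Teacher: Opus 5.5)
Your proposal takes the same route as the paper and is correct once one step is removed. The paper's proof is the sketch ``check that the main term satisfies the equation and boundary conditions and estimate the rest,'' and the construction behind \eqref{eq_v} is exactly a first-order ansatz whose $h a_1$ term is absorbed into the $O(h)$. You are right that the bare main term is not enough, since $(\partial_t^2-\partial_x^2+q)w=qw$ is $O(1)$. The transport corrections $a_1^\pm$ are what make the residual $O(h)$.

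Concretely, take $a_1^+=\frac{i}{2}\big(\int_0^x q\big)\chi_b(t-x)$ and $a_1^-=\frac{i}{2}\big(2\int_0^l q-\int_0^x q\big)\chi_b(t+x-2l)$. Two small corrections to your setup: the reflected phase is $t+x-2l$, not $t+x$, and since $v=v_+-v_-$ the matching condition at $x=l$ is $a_1^-=a_1^+$, not $a_1^-=-a_1^+$. With these choices the remainder $r$ has zero initial data and zero Dirichlet data at both ends for $t\in[0,2l]$. Its source is bounded by $Ch$, with $C$ depending on $\|q\|_{C^1}$ and $\chi_b$. Step 1 of Proposition \ref{Prop_linear_energy_est} then gives $\|r\|_{E^1}\le Ch$, and Sobolev embedding finishes the proof.

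The step you flag as the main obstacle should be eliminated, not handled as you describe. The mismatch $h\,a_1^+(0,t)e^{it/h}$ at $x=0$ exists only if you solve the transport equation for $a_1^+$ with nonzero data on $x=0$. The characteristics $t-x=\mathrm{const}$ are transversal to $x=0$, so you can take $a_1^+(0,t)=0$. This is what you get naturally by integrating $q\chi_b$ from $x=0$, and it mirrors how you already prescribe data for $a_1^-$ on $x=l$. Then $v_+(0,t)=f_h(t)$ exactly, and no lift is needed.

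If you did keep a nonzero mismatch, the lifts you suggest would not yield $\|r\|_{H^1}\lesssim h$ through energy estimates:
\begin{itemize}
\item Replacing $a_1^+$ by $(1-\psi(x))a_1^+$ leaves the $O(1)$ residual $e^{ih^{-1}(t-x)}\big(\psi q\chi_b-2i\psi' a_1^+\big)$, because the factor $h^{-1}$ from the phase multiplies the derivative of the cutoff.
\item Lifting $g(t)=h\,a_1^+(0,t)e^{it/h}$ as $\psi(x)g(t)$ produces a source of size $h^{-1}$ from $\partial_t^2$.
\item Using the boundary estimate of Proposition \ref{Prop_linear_energy_est} directly also fails, since $\|g\|_{H^{3/2}}\sim h^{-1/2}$.
\end{itemize}
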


To prove the lemma we check that the main term satisfies the equation and boundary conditions and estimate the rest. The extension of this lemma to tree graphs is discussed in the next subsection.

\subsection{Geometric optics on a star graph}
Let us consider a star graph, namely $n$ intervals $I_1,\cdots,I_n$ with length $l_1,\cdots,l_n$ glued at $x=0$, where we parametrize the intervals $I_j$ as $[0,l_j]$.
We a boundary source of the form \eqref{source} at time $t=-l_1$ at $x=l_1,$ the endpoint $z_1$ of $I_1$, namely $f(z_1,t)=f_h(t)$, and $0$ on all other boundary vertices. 
The solution $v^h$ of the equation \eqref{eq-gauss} with this boundary source has the following form on $I_1$,
\begin{equation}\label{v_h}
    v^h(x,t)=e^{ih^{-1}(t+x)}\chi_b(t+x) +O(h), \quad x\in I_1,\; t\in [-l_1,0],
\end{equation}
Note that the phase function here differs slightly from \eqref{eq_v} due to different parametrization.

The following lemma characterizes the behaviour of geometric optics solutions past the common vertex of a star graph.

\begin{lemma} \label{CGO-Y}
On the metric graph of gluing $n$ intervals at one common point parametrized as above, the geometric optics solution of the equation \eqref{eq-gauss}, with the boundary source chosen as above, has the following form on the time interval $t\in [0,\min_{j} l_j]$:
\begin{equation}
    v_1^h(x,t)= e^{ih^{-1}(t+x)}\chi_b(t+x) - \frac{n-2}{n} e^{ih^{-1}(t-x)}\chi_b(t-x) +R_h(x,t), \quad x\in I_1, 
\end{equation}
\begin{equation}
    v_j^h(x,t)= \frac{2}{n} e^{ih^{-1}(t-x)}\chi_b(t-x) +R_h(x,t), \quad x\in I_j,\  j=2,\cdots, n,
\end{equation}
where $v_j^h$ is the restriction of the solution $v^h$ on $I_j$.
\end{lemma}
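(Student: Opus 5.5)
The plan is to build an explicit approximate solution $w^h$ on the star graph. It will satisfy the Kirchhoff-Neumann conditions at the centre exactly, satisfy the boundary conditions at the outer endpoints, and have an $O(h)$ residual. The difference $v^h-w^h$ is then handled by the energy estimate for the linear problem \eqref{eq-gauss}, exactly as in the proof of Lemma \ref{go-reflect} and the interval estimate \eqref{eq-remainder-estimate}.

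On $I_1$ the ansatz is $w_1 = e^{ih^{-1}(t+x)}\chi_b(t+x)(1+h\,\alpha_1(x,t)) + r\, e^{ih^{-1}(t-x)}\chi_b(t-x)(1+h\,\beta_1(x,t))$. On $I_j$, $j\ge 2$, it is $w_j = \tau\, e^{ih^{-1}(t-x)}\chi_b(t-x)(1+h\,\beta_j(x,t))$.

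The coefficients $r,\tau$ come from the vertex conditions applied to the leading terms. Continuity at $x=0$ gives $1+r=\tau$. Outward derivatives at $x=0$ along $I_j$ are $\partial_x$, since each $I_j$ is parametrized from $0$ outward. The derivative of the phase dominates at order $h^{-1}$, so the Kirchhoff condition at leading order reads $ih^{-1}\big[(1-r) - (n-1)\tau\big]=0$. Solving gives $\tau = 2/n$ and $r=\tau-1=-(n-2)/n$, which are the coefficients in the statement.

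The first-order amplitudes $\alpha_1,\beta_j$ are solved along characteristics, i.e.\ transport equations, so that $q$ is absorbed. This yields a residual $(\partial_t^2-\partial_x^2+q)w = O(h)$ in $C^0$, which needs $q\in C^1$. The lower-order mismatches in the vertex conditions are also $O(h)$ in derivative, coming from $\partial_x$ hitting $\chi_b$ and the $h$-amplitudes. These can be corrected exactly at $x=0$ by choosing the boundary values of $\beta_j$ at $x=0$ to cancel them, or by adding a small smooth correction supported near the vertex. On the time interval $[0,\min_j l_j]$ the outgoing waves have not yet reached the outer endpoints. The outer endpoints see only the zero boundary data (and the prescribed source on $I_1$, which is already encoded for $t\le 0$), so no further reflections appear.

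Set $R=v^h-w^h$. It solves the linear problem with KN conditions, a zero or $O(h)$ source term, zero boundary data, and zero initial data at $t=-l_1$ by \eqref{v_h}. The standard energy estimate on the graph, using the self-adjointness of $\mathcal L$ with KN conditions, bounds $\|R(\cdot,t)\|_{\mathcal H^1}\le C h$. Sobolev embedding on each edge then gives $|R_h|\le C_b h$ pointwise.

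The main obstacle is the bookkeeping at the vertex: making the $O(1)$ Kirchhoff residual (not just the $O(h^{-1})$ part) vanish exactly, since any residual in the vertex condition enters the energy estimate as a boundary term. I would handle this by choosing $\beta_j(0,t)$ appropriately, or by subtracting a corrector $h\,\psi(x)g_j(t)$ with a cutoff $\psi$ near $0$.
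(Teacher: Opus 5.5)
Your proposal is correct and obtains $r=-\frac{n-2}{n}$, $\tau=\frac{2}{n}$ the same way the paper does, by imposing continuity and the Kirchhoff condition at the centre on the leading terms; your parametrix-plus-energy-estimate argument (via Proposition \ref{Prop_linear_energy_est}) supplies the $O(h)$ bound on $R_h$ that the paper leaves implicit. The vertex ``obstacle'' you flag does not actually arise: since $\partial_x f_h(t\pm x)|_{x=0}=\pm f_h'(t)$, the leading terms satisfy both KN conditions exactly (the $\chi_b'$ contributions cancel by the same identity $1-r-(n-1)\tau=0$, which is why the paper writes the conditions in terms of $f_h$ and $f_h'$), and with $\beta_j(0,\cdot)=\alpha_1(0,\cdot)$ for $j=1,\dots,n$ and $q$ continuous at the vertex, the first-order amplitudes satisfy them exactly as well, so you should drop the cutoff corrector $h\psi(x)g_j(t)$, which would produce an $O(h^{-1})$ interior residual through $\partial_t^2$ because $g_j$ oscillates like $e^{it/h}$.
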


\begin{proof}
By the Kirchhoff-Neumann condition, we have at $x=0$ for any $t$,
\begin{equation*}
    \begin{cases}
v_1^h(0,t)=v_2^h(0,t)=\cdots=v_n^h(0,t), \\
\sum_{j=1}^n \partial_x v_j^h(0,t)=0.
\end{cases}
\end{equation*}
Solving these equations and examining $0$-th order terms (with respect to $h$) gives the coefficients appeared in the formulae. Indeed, let $c_j$ be the coefficients appeared in $v_j^h$, respectively. Then the conditions above give
\begin{equation*}
    \begin{cases}
f_h(t)-c_1 f_h(t)=c_2 f_h(t)=\cdots=c_n f_h(t), \\
f_h'(t)+c_1 f_h'(t)-\sum_{j=2}^n c_j f_h'(t)=0.
\end{cases}
\end{equation*}
Hence 
\begin{equation*}
c_1=\frac{n-2}{n}, \quad c_j=\frac2n,\quad j=2,\cdots,n. \qedhere
\end{equation*}
\end{proof}

Now we consider the simplest graph, interval $[0,l]$ with the boundary control $f=f_h(t)$ applied at the left endpoint: $v(0,t)=f(t), \ v(l,t)=0.$ One can check that
for $t>0$ the solution of \eqref{eq-gauss} is presented as
\begin{equation} \label{intl}
v(x,t)= \Big(1+R_h(x,t) \Big)\,\Big( f(t-x)-f(t+x-2l)+f(t-x-2l)-f(t+x-4l)+\cdots \Big).
\end{equation}
If the boundary control $f=f_h(t)$ is applied at the right endpoint: $v(0,t)=0, \ v(l,t)=f(t),$ then the solution of \eqref{eq-gauss} takes the form
\begin{equation} \label{intr}
v(x,t)=\Big(1+ R_h(x,t) \Big)\,\Big(f(t+x-l)-f(t-x-l)+f(t+x-3l)-f(t-x-3l)+\cdots \Big).
\end{equation}
Lemma \ref{CGO-Y} and formulas \eqref{intl}, \eqref{intr} allow us to construct the solution to the problem \eqref{eq-gauss} with $f=f_h(t)$ for all $t>0.$

To derive the equations describing the solution to the problem \eqref{eq-gauss} on a general graph we can propose another way to solve the problem described in Lemma \ref{CGO-Y}. We denote  by $g(t)$  the value of the solution $v(0,t)$ at the interior vertex and write the matching conditions connecting $g$ and $f$.
We obtain a delay integral equation of the second kind Volterra type. On a general graph we obtain a system of such equations.

Let us begin with a star graph described above. We put
$$F(x,t)=f(t+x-l_1)-f(t-x-l_1)+f(t+x-3l_1)-f(t-x-3l_1)+\ldots,$$
$$G_j(x,t)=g(t-x-l_1)-g(t+x-l_1-2l_j)+g(t-x-l_1-2l_j)-g(t+x-l_1-4l_j)+\ldots,\ \; j=1,\ldots,n.  $$
Up to the zero-th order terms,
$$v_1(x,t)=\Big(1+ R_h(x,t) \Big) \Big(F(x,t)+G_1(x,t)\Big), $$$$ v_j(x,t)=\Big(1+ R_h(x,t) \Big)G_j(x,t), \ \; j=2,\ldots,n.$$  
%Here 
%$$\psi_0(x,t)=\frac12 \int_{l_1}^x q_1(s) ds, \ \psi_j(x,t)=\frac12 \int_{0}^x q_1(s) ds + \alpha_j, \; j=1,\ldots,n,$$
%where constants $\alpha_j$ will be determined below. We may originally assume that $\alpha_j$ are piece wise constant functions of $t,$ but it turns out that all of them are equal to $\alpha$ indicated above and do not depend on $t$ or $j$. 
%Since $F(0,t)=0$ and $G_j(0,t)=g(t)$ for all $j$ an $t,$ the continuity condition at $x=0$ implies 
%\begin{equation} %\label{ajk}
%\alpha_j=\alpha_k, \; \ j,k=1,\ldots,n .
%\end{equation}
Now we present the second KN condition, $\sum_{j=1}^n \pa_x v_j(0,t)=0.$ It is convenient to introduce a new time variable instead of $t-l_1$ and denote it again by $t.$ Taking into account that
$$ \pa_x F(0,t)=2\Big(f'(t)+f'(t-2l_1)+f'(t-4l_1)+\ldots \Big),$$
%$$\pa_x G_j(0,t)=-[g'(t)+2g'(t-2l_j)+2g'(t-4l_j)+\ldots],$$
and examining the zero-th order term in $h$, we obtain:
\begin{equation} \label{kns}
\pa_x F(0,t)+\sum_{j=1}^n \pa_x  G_j(0,t)=0. %\alpha \, \pa_x  F(0,t)+\sum_{j=1}^n \alpha_j \pa_x  G_j(0,t)=0.
\end{equation}
%It follows that $\alpha_j=\alpha$ for all $j$ and $t.$
This equation can be presented in the form
\begin{equation} \label{delay}
2\Big( f'(t)+f'(t-2l_1)+f'(t-4l_1)+\ldots \Big)=\sum_{j=1}^n \Big(g'(t)+2g'(t-2l_j)+2g'(t-4l_j)+\ldots \Big),
\end{equation}
and rewritten as 
\begin{equation} \label{del}
2 \Big( f(t)+f(t-2l_1)+f(t-4l_1)+\ldots\Big)=\sum_{j=1}^n \Big(g(t)+2g(t-2l_j)+2g(t-4l_j)+\ldots \Big).
\end{equation}
It allows us to find $g(t)$ by $f(t)$ and, therefore, to find the main terms of the Gaussian beam on a star graph. For that, we consider all linear combinations $\sum_{j=1}^n 2\,k_j\,l_j$ with $k_j \in \{0,1,2,\ldots \}$ and
organize these numbers $0,t_1,t_2,\ldots$ in the increasing order on the real axis. Then the equations \eqref{delay} can be solved in steps starting with the interval $(0,t_1),$ then on $(t_1,t_2),$ and so on.

Example. Consider a 3-star graph with $l_1=2,\,l_2=1,\,l_3=5.$ Then solving equation \eqref{del} in this specific case we obtain:

for $0<t<2, \ g(t)=\frac23\,f(t);$

for $2<t<4, \ g(t)=\frac23\,f(t)-\frac49\,f(t-2);$

for $4<t<6, \ g(t)=\frac23\,f(t)-\frac49\,f(t-2)+\frac{2}{27}\,f(t-4);$

for $6<t<8, \ g(t)=\frac23\,f(t)-\frac49\,f(t-2)+\frac{2}{27}\,f(t-4)-\frac8{81}\,f(t-6);$ \\ and so on.

Now we will derive similar equations that allow to construct Gaussian beams on any metric graph. Suppose a finite metric graph $\Omega$ contains  $M$ vertices and $N$ edges. (At this point we denote vertices by $v_j$ and the solution to the wave equation by $u.$)
Let $e_k=(v_i, v_j)$ be an edge in $\Omega$ which is identified with the interval $[0,l]$.  Suppose $v_i$ is identified with $0$ and $v_j$ is identified with $l$ (we say $e_k$ goes from $v_i$ to $v_j$).  Define operators $W_k^\pm: L_2(0,T) \mapsto C([0,T];L_2(0,l))$: $$(W_k^-(f)) (x,t) = u_k^{f, v_i}(x,t), \quad \textrm{and} \quad  (W_k^+(f)) (x,t) = u_k^{f, v_j}(x,t).$$ 
 where $u_k^{f, v_i}$ and $u_k^{f, v_j}$ are obtained using \eqref{intl} and \eqref{intr} respectively.  

Operators $\partial^-$ and $\partial^+$ on $C([0,T]; L_2([0,l])$ are defined for taking derivative of the wave function along an edge outward of its vertices: $\partial^-(u) = \partial_x u (0, \cdot)$ and $\partial^+(u) = -\partial_x u (l, \cdot)$.  We have four combinations of $\partial^\pm$ and $W_k^\pm$ on $e_k$:
$$
(\partial^- W^-_k) (f) = -f'(t)   
-2f'(t-2l) 
-2f'(t-4l) +\dots \,,
$$
$$
(\partial^+ W^-_k) (f)= 2f'(t-l) +2w(l,l) f(t-l)
+2f'(t-3l)  +\dots \,,
$$
$$
(\partial^- W^+_k) (f)=2f'(t-l) 
+2 f'(t-3l) +\dots \,,
$$
$$
(\partial^+ W^+_k) (f)=-f'(t)  
-2f'(t-2l) 
-2f'(t-4l)  -\dots \,.
$$

We next define an $N \times M$ matrix operator $U,$ such that $U$ has one column for each vertex and one row for each edge.  The entries of $U$ are defined in analogy to the entries in the incident matrix of $\Omega$:  if there is an edge $e_k$ from $v_i$ to $v_j$, then $U_{k,i}=W_k^-$ and $U_{k,j}=W_k^+$.  All other entries in $U$ are zeros.  As one can see, $Ug$ gives us a column vector, where the $k^{th}$ entry equals to $u_k(x,t)$ on the edge $e_k$. If $g(t)$ is known then $Ug$ represents the solution for \eqref{eq-gauss}. 

Operator $K$ is defined as an $M \times N$ matrix operator. Its entries are defined in analogy to the transpose of the incident matrix of $\Omega$: if there is an edge $e_k$ from vertex $v_i$ to $v_j$, then $K_{ik}=\partial^-$ and $K_{jk}=\partial^+$.   All other entries in $K$ are zeros.  Now $KUg$ is a column vector of $M$ entries.  The $i^{th}$ entry represents the derivative sum of $u$ at a vertex $v_i$ (over all incident edges, outwards of $v_i$).  

Since the KN conditions hold only at the internal vertices,  we use an $M \times M$ diagonal matrix $D$ to pick out the interior vertices.  Let $D_{ij}=1$ if $i=j$ and $v_i \in V \setminus \Gamma$, $D_{ij}=0$ otherwise.  So the KN conditions on $V \setminus \Gamma$ can be represented by 
\begin{equation} \label{svi}
DKUg = 0\,.
\end{equation}
Equation \eqref{svi} is a system of $|V|-|\Gamma|$ equations similar to Equation \eqref{delay}:  
\begin{equation} \label{splitgroup}
\deg(v_i)\, g_i'(t) =F_i(t),
\end{equation}
 where for all $i=1, \dots, |V|-|\Gamma|$, $F_i(t)$ depends on the vector function $g$ with arguments delayed by some positive number. 
 The values of $g(t)$ on $\Gamma$ are given as boundary conditions in \eqref{eq-gauss}. Since $g(0)=0$, $g$ on $V \setminus \Gamma$ can be calculated in steps. 
 
%To compute $\psi_j(x),$ the first order terms in $h,$ on the edge $e_j$ we assume that our graph is tree, the control $f=f_h(t)$ is applied at one of the boundary vertices, say $v_1,$ and at all other  boundary vertices we have zero Dirichlet conditions. Let the edge $e_1(v_1,v_2)$ be incident to $v_1,$
%the edge $e_2(v_2,v_3)$ be incident to $v_2,$ and the edge $e_3(v_3,v_4)$ be incident to $v_3,$ and every $e_j(v_j,v_{j+1})$ is parametrized by $x \in (0,l_j)$ with $0$ at $v_j.$
%Then, as we demonstrated for a star graph,
 %$$\psi_1(x)=\int_0^x q_1(s)\,ds, \ \psi_2(x)=\int_0^x q_2(s)\,ds + \int_0^{l_1} q_1(s)\,ds, $$$$ \psi_3(x)=\int_0^x q_3(s)\,ds + \int_0^{l_2} q_2(s)\,ds +\int_0^{l_1} q_1(s)\,ds.  $$
 
% Since on a tree there is a unique path from $v_1$ to any other vertex, we can determine in such a way  $\psi_j(x)$ for every $e_i.$
 
 %{\bf S: I dropped here the factor $1/2$ of all integrals, it can be added later.
 %The result depends on our choice of the function $\theta$ and may have a slightly different form, but the approach works in any case.}

\section{Linearization method}
\label{sec-linearization}

Using the linearization method, the Dirichlet-to-Neumann map $\Lambda_T$ defined in \eqref{DN-nonlinear} determines
\begin{equation}
\partial_{\varepsilon_1} \partial_{\varepsilon_2} 
\partial_{\varepsilon_3}
\Big(\Lambda_T (\varepsilon_1 f_1+ \varepsilon_2 f_2+\varepsilon_3 f_3)\Big)  \Big|_{\varepsilon_1=\varepsilon_2=\varepsilon_3=0} = \partial w \big|_{\Gamma_0 \times (0,T)},
\end{equation}
where 
\begin{equation} \label{nonlinear_w}
\begin{cases}
(\partial_{t}^2-\partial_x^2) w +qw = -6a v_1 v_2 v_3, \\
w|_{\Gamma\times (0,T)}=0, \\
w|_{t\leq 0}=0,
\end{cases}
\end{equation}
and, for $j=1,2,3$,
\begin{equation}
\begin{cases}
(\partial_{t}^2-\partial_x^2) v_j +q v_j= 0, \\
v_j|_{\Gamma_0\times (0,T)}=f_j, \ v_j|_{\{\gamma_0\}\times (0,T)}=0, \\
v_j|_{t\leq 0}=0.
\end{cases}
\end{equation}
Let $T>0$, and $v_0$ be the solution of an auxiliary wave sending from $t=T$,
\begin{equation}
\begin{cases}
(\partial_{t}^2-\partial_x^2) v_0 +q v_0= 0, \quad\quad\quad t\in (0,T), \\
v_0|_{\Gamma_0\times (0,T)}=f_0,  \ v_0|_{\{\gamma_0\}\times (0,T)}=0, \\
v_0|_{t= T}=0, \quad \partial_t v_0|_{t= T}=0.
\end{cases}
\end{equation}
Using the equations above and integration by parts, one has
\begin{equation} \label{a-integral}
\int_0^T \int_{\Omega} 6av_0 v_1 v_2 v_3 = \int_0^T \int_{\Gamma_0} f_0 \partial w,
\end{equation}
the right-hand side of which is determined by the given data $\Lambda$.

\smallskip
For starters, we could consider the problem on the interval $[0,l]$ where we have control and observation only at $x=0$, with the Dirichlet condition at $x=l$. 
The wave $v_1$ is the geometric optics solution \eqref{eq_v} with the boundary source \eqref{source} sent at $t=t_0>0$ from $x=0$.
%Note that the requirement of $t_0>0$ is due to the initial condition of the equation \eqref{eq-wave-nonlinear}, and the size $b$ of the cut-off function in geometric optics \eqref{source} needs to chosen such that $b<t_0$.
%the geometric optics is nonzero near $t=t_0$.
The linear waves $v_2,v_3$ are sent from $x=0$ at some delayed time $t=s>t_0$. The wave $v_0$ is sent from $x=0$ at time $t=2l+t_0$. Then the wave $v_2,v_3$ and the reflected wave of $v_1$ (after hitting $x=l$) and the wave $v_0$ intersect at a small region of size $2b$, see Figure \ref{fig_interval}.
Assume $T>2l$.
As $h,b\to 0$, the left-hand side of \eqref{a-integral} converges to the pointwise value of $a$. 
Ranging the delay parameter $s\in [t_0,2l+t_0]$ recovers $a$ on the line $\{x+t=2l+t_0\}$. This calculation uses the explicit formula for the reflected wave of $v_1$ derived in Section \ref{sec-CGO}. 
Then ranging parameter $t_0\in (0,T-2l)$ recovers the time-dependent coefficient $a$ on a parallelogram domain.
The details are done in the following lemma.

%\begin{figure}
%\includegraphics[width=0.4\linewidth]{interval}
%\caption{Setting of Lemma \ref{interval} and \ref{star-equal}.}
%\label{fig_interval}
%\end{figure}

\begin{figure}
    \begin{minipage}{0.47\textwidth}
        \includegraphics[width=0.83\linewidth]{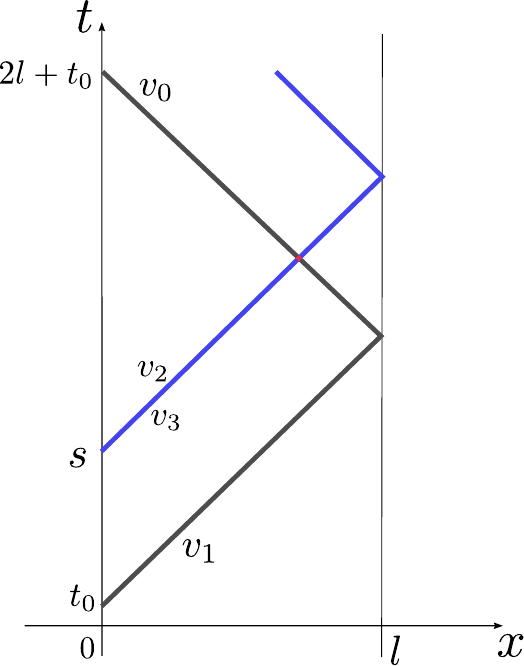}
        \caption{Setting of Lemma \ref{interval} and \ref{star-equal}.}
        \label{fig_interval}
    \end{minipage}
    \begin{minipage}{0.47\textwidth}
        \includegraphics[width=0.83\linewidth]{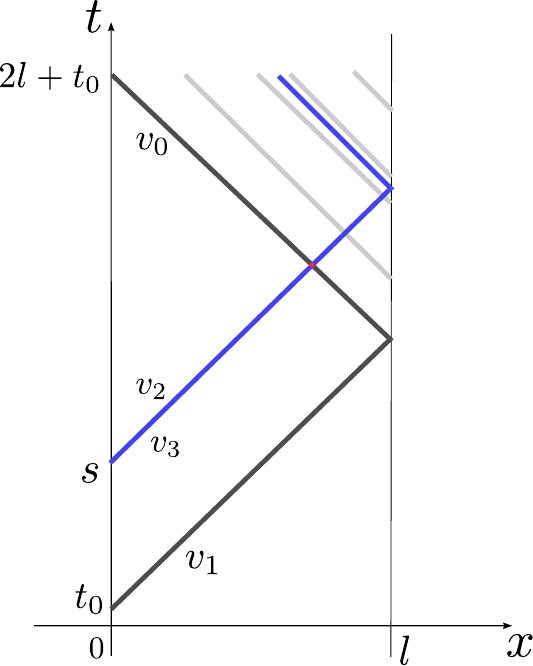}
        \caption{Setting of Proposition \ref{star-diff}.}
       \label{fig_diff}
    \end{minipage}
\end{figure}

\begin{lemma} \label{interval}
	Let $\Omega= [0,l]$, and suppose that control and observation are available at $x=0$ (i.e., $\Gamma_0=\{0\}$). 
 %and Dirichlet condition on $x=l$.
 Assume $T>2l$.
 Then $\Lambda_T$ uniquely determines $a$ {on $\big\{(x,t)\in [0,l]\times [0,T]: 2l \leq x+t \leq T \big\}$}.
\end{lemma}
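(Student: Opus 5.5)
The plan is to use the integral identity \eqref{a-integral} with four geometric optics waves, arranged so that the product $v_0v_1v_2v_3$ concentrates near a single point $(x_*,t_*)$ with $x_*+t_*=2l+t_0$. The left-hand side of \eqref{a-integral} is known from $\Lambda_T$, and after suitable normalization we want it to converge to $a(x_*,t_*)$.

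\textbf{Choice of sources.} We take the following, with $0<b<t_0$ so that the sources vanish near $t=0$.
\begin{itemize}
\item $f_1(t)=e^{ih^{-1}(t-t_0)}\chi_b(t-t_0)$. By Lemma \ref{go-reflect}, for $t\le 2l+t_0$ we have $v_1=e^{ih^{-1}(t-x-t_0)}\chi_b(t-x-t_0)-e^{ih^{-1}(t+x-2l-t_0)}\chi_b(t+x-2l-t_0)+O(h)$.
\item $f_2=f_3=e^{-ih^{-1}(t-s)}\chi_b(t-s)/2$ (complex conjugate phase), with delay $s\in(t_0,2l+t_0)$. Then $v_2=v_3$ are transported along $t-x=s$ until their reflection at time $s+2l$.
\item $v_0$ is the backward solution with $f_0(t)=\chi_b(t-2l-t_0)$ (or a suitable phase so that the total phase cancels). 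Going backward from $t=T$ it is a wave along $t+x=2l+t_0$ plus a reflected piece along $t-x=2l+t_0-2l=t_0$.
\end{itemize}
The condition $T>2l$ and $t_0<T-2l$ guarantee $2l+t_0+b<T$, so $f_0$ is admissible.

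\textbf{Localization.} Take the product of the leading terms. The incoming part of $v_1$ lies on $t-x=t_0$ and $v_2v_3$ lies on $t-x=s$; since $s\ne t_0$ these supports are disjoint for small $b$. The terms that survive are therefore the reflected part of $v_1$ on $t+x=2l+t_0$, the incoming part of $v_0$ on the same line, and $v_2v_3$ on $t-x=s$. These intersect in a box of size $O(b)$ around $x_*=(2l+t_0-s)/2$, $t_*=(2l+t_0+s)/2$. We must also check that the reflected piece of $v_0$ (on $t-x=t_0$) never meets the support of $v_2v_3$ (on $t-x=s$); it does not, again since $s\ne t_0$. The phases cancel: $(t+x-2l-t_0)-2(t-s)+\dots$; the weights are arranged (e.g. choose the phase of $f_0$ as $e^{ih^{-1}(t-s)}$-type compensation) so that the total phase is constant on the box. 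If exact cancellation is awkward, I would instead use real sources $\chi_b$ with $h$ irrelevant. Indeed, in 1D, d'Alembert already gives transport with $O(b)$ error when $q$ is bounded, so one can take $v_j$ non-oscillatory.

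With this, the left side of \eqref{a-integral} equals
\[
-6\int\!\!\int a\,\chi_b(t+x-2l-t_0)^2\chi_b(t-x-s)^2\,dx\,dt+O(h)+o(b^2).
\]
Dividing by $\int\!\!\int\chi_b^2\chi_b^2\sim b^2$ and letting $h\to0$, then $b\to0$, yields $a(x_*,t_*)$ by continuity of $a$.

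\textbf{Covering the domain.} Varying $s\in(t_0,2l+t_0)$ covers the segment $\{x+t=2l+t_0\}$ with $x\in(0,l)$. Varying $t_0\in(0,T-2l)$ then covers the open set $\{2l<x+t<T\}$, and the boundary of the region follows by continuity.

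The main obstacle is the bookkeeping of the $O(h)$ remainders and of the multiple reflections, namely showing that no other pair of wave branches overlaps in the integration region up to time $T$. Since only $v_0$ is nonzero near $t=T$ and the others are supported where $t\le 2l+t_0+O(b)$, only one reflection per wave is relevant, and the remainder is controlled uniformly by \eqref{eq-remainder-estimate}.
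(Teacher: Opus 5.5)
\textbf{The main construction fails at the phase-cancellation step.} With $f_2=f_3=e^{-ih^{-1}(t-s)}\chi_b(t-s)/2$ you get $v_2v_3=\tfrac14 e^{-2ih^{-1}(t-x-s)}\chi_b^2(t-x-s)+O(h)$. This oscillates in the $t-x$ direction. Near the intersection box, the other two factors depend on $t+x$ only: the backward wave $v_0\approx f_0(t+x)$ and the reflected part of $v_1$. So no choice of $f_0$ can cancel the factor $e^{-2ih^{-1}(t-x)}$. Your suggested $e^{ih^{-1}(t-s)}$-compensation leaves the total phase $h^{-1}(4x+s-2l-t_0)$. A real $f_0$ would also leave the reflected phase $e^{ih^{-1}(t+x-2l-t_0)}$ uncancelled.

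If you then let $h\to0$ at fixed $b$, as you propose, the leading integral tends to $0$ by the Riemann--Lebesgue lemma, and you learn nothing about $a$. The phase bookkeeping essentially forces the paper's choice:
\begin{itemize}
\item take $f_3=\overline{f_2}$, so that $v_2v_3=\chi^2(t-x-s)+o(1)$;
\item take $f_0(t)=e^{ih^{-1}(2l+t_0-t)}\chi(2l+t_0-t)$, so that $v_0=e^{ih^{-1}(2l+t_0-t-x)}\chi(2l+t_0-t-x)+o(1)$ cancels the phase of the reflected part of $v_1$ exactly.
\end{itemize}
Then $h\to0$ yields $-\iint a\,\chi^2(t-x-s)\chi^2(t+x-2l-t_0)\,dx\,dt$ exactly. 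Your localization, the sweeps in $s$ and $t_0$, and the continuity argument at the edges of the region then go through as in the paper. Incidentally, it is $v_0$ that vanishes for $t>2l+t_0+b$, and this is what truncates the integral; you stated it the other way round.

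\textbf{The non-oscillatory fallback is a valid alternative, but it is only sketched.} Using real sources is a genuinely different route that works in one dimension and avoids geometric optics altogether. As written, though, it is a remark, and the error bookkeeping you give (with $O(h)$) does not apply to it. Write $v_j=L_j+R_j$, with $L_j$ the reflected d'Alembert profile. Two checks are needed:
\begin{itemize}
\item You need the sup-norm bound $|R_j|\le Cb$. This follows from the characteristic (Duhamel) representation plus Gronwall. The energy estimate alone gives only $O(\sqrt b)$, and that is too weak for terms such as $L_0L_1R_2R_3$.
\item $R_j$ is not localized near the strips; it fills the region behind the front. So you must bound each cross term using that the remaining leading factors are supported on sets of area $O(b)$ (parallel strips) or $O(b^2)$ (crossing strips).
\end{itemize}
Together these give $O(b^3)=o(b^2)$ for every cross term, against a main term $\asymp a(x_*,t_*)\,b^2$. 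Dividing by $\iint\chi_b^2\chi_b^2$ and letting $b\to0$ then recovers $a(x_*,t_*)$. By comparison, the paper's oscillatory scheme obtains the exact weighted integral at a fixed cutoff and only afterwards approximates the delta function.
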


\begin{proof}
	Let $x_0\in (0,l)$ and $t_0\in (0,T-2l)$ be arbitrary. We set $s:= 2(l - x_0)+t_0$ and consider $\chi \in C_0^\infty(\mathbb{R})$ supported in $(-t_0/2,t_0/2)$. 
	The waves $v_2,v_3$ are given in \eqref{eq_v}:
	\begin{align*}
		&v_2(x,t) = e^{ih^{-1}(t - x - s)} \chi(t - x - s) + o(1),\\
		&v_3(x,t) = \overline{v_2(x,t)},
	\end{align*}
	%According to \eqref{v_h}, for the boundary term given by REF, it follows that
The wave $v_1$ is given by Lemma \ref{go-reflect}:
$$		v_1(x,t) = e^{ih^{-1}(t - x-t_0)} \chi(t - x-t_0)  - e^{ih^{-1}(t + x -2l-t_0)} \chi(t + x -2l-t_0) + o(1).$$
%{Note that the size of the cut-off function $\chi$ needs to be small than $t_0$, in order to satisfy the initial condition of the equation \eqref{eq-wave-nonlinear}.}
The wave $v_0$ is the time-reflection of wave $v$ in \eqref{eq_v}:
	\begin{align*}
		v_0(x,t) =v(x,2l+t_0-t)= e^{ih^{-1}(2l+t_0-t - x)} \chi(2l+t_0-t - x) + o(1),
	\end{align*}
	as $h\rightarrow 0$ uniformly on $(x,t)$. Therefore, since $\chi$ is supported in $(-t_0/2,t_0/2)$, it follows that
	\begin{equation*}
		\int_{0}^{2l+2t_0}\int_0^l av_0v_1v_2v_3 dxdt = - \int_{0}^{2l+2t_0}\int_0^l a(x,t) \chi^2(t - x - s) \chi^2(t + x -2l-t_0)dxdt + o(1).
	\end{equation*}
	Taking $h\rightarrow 0$ and recalling \eqref{a-integral}, we see that 
	\begin{equation*}
		I :=  \int_{0}^{2l+2t_0}\int_0^l a(x,t) \chi^2(t - x - s) \chi^2(t + x -2l-t_0)dxdt
	\end{equation*}
	can be determined by the given $\Lambda_T$. After changing the variables, we obtain
	\begin{equation*}
		I = \frac{1}{2} \int_{-s/2}^{s/2}\int_{-s/2}^{s/2} a\left(\frac{1}{2}(z - y +2l+t_0 - s),\frac{1}{2}(z + y +2l+t_0 +s)\right) \chi^2(y)\chi^2(z)dzdy.
	\end{equation*}
Since the Dirac delta function can be approximated by smooth functions supported in $(-t_0/2,t_0/2)$, it follows that $a\left(\frac{2l+t_0 - s}{2},\frac{2l+t_0 + s}{2}\right)=a\left(x_0,\frac{2l+t_0 + s}{2}\right)$, i.e. the red point in Figure \ref{fig_interval}, can be determined by the given $\Lambda_T$. Hence $a$ on the line $\{x+t=2l+t_0\}$ can be determined.

Let $A\in (2l,T)$. We take $t_0=A-2l>0$ and range $s$ in $[t_0,2l+t_0]$. 
%The size $b$ of the cut-off function $\chi$ can be chosen so that $b<\min\{T-A,A-2l\}$.
The construction above determines $a$ on the line $\{x+t=A\}$.
Taking $A \to T$ determines $a$ on $\{x+t = T\}$ by continuity, and taking $A \to 2l$ determines $a$ on $\{x+t = 2l\}$.
Thus $\Lambda_T$ determines $a$ on the closed parallelogram domain $\{2l \leq x+t \leq T\}$.
\end{proof}

\begin{lemma} \label{star-equal}
For a star graph of equal edge lengths, suppose that control and observation are available on one leaf $z$ (i.e., $\Gamma_0=\{z\}$).
%Dirichlet condition on all other leaves.
Let $e_z$ denote the edge incident to $z$ and $l$ be its length.
Assume $T>2l$.
Then $\Lambda_T$ uniquely determines $a$ on the edge $e_z$ in the domain
$$\Big\{(x,t)\in e_z\times [0,T]: 2l-d(x,z)\leq t\leq T-d(x,z)\Big\}.$$
\end{lemma}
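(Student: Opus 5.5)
The proof follows Lemma \ref{interval} almost verbatim; the only new input is the reflected geometric optics wave on a star graph of equal edge lengths. Parametrize $e_z$ as $[0,l]$ with $z$ at $x=0$ and the central vertex at $x=l$. Let the star have $n$ edges, all of length $l$, with Dirichlet conditions at the other $n-1$ leaves. We send a source $f_h$ of the form \eqref{source} from $z$ at time $t_0\in(0,T-2l)$.

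\textbf{Step 1: the reflected wave on $e_z$.} By Lemma \ref{CGO-Y}, up to time $t_0+l$ the wave on $e_z$ is the incoming beam. At time $t_0+l$ it hits the center and produces a reflected beam on $e_z$ with amplitude $-\frac{n-2}{n}$, together with transmitted beams of amplitude $\frac2n$ on the other edges. The transmitted beams reach the Dirichlet leaves at time $t_0+2l$ and come back to the center at $t_0+3l$. Because all edges have length $l$, they return simultaneously, so nothing new enters $e_z$ before time $t_0+3l$.

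Hence, for $t\le t_0+3l$ (in particular in the relevant range $t\le t_0+2l$), we have, modulo $O(h)$ by \eqref{eq-remainder-estimate},
\[ v_1(x,t)=e^{ih^{-1}(t-x-t_0)}\chi(t-x-t_0)-\tfrac{n-2}{n}\,e^{ih^{-1}(t+x-2l-t_0)}\chi(t+x-2l-t_0)+O(h). \]
This is exactly the formula of Lemma \ref{go-reflect} with the reflection coefficient $-1$ replaced by $-\frac{n-2}{n}$.

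\textbf{Step 2: the integral identity.} Choose $v_2$ and $v_3=\overline{v_2}$ sent from $z$ at the delayed time $s=2(l-x_0)+t_0$. Choose $v_0$ as the time reversal of the direct beam sent back from $z$ at time $2l+t_0$.

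The four waves meet only near the point $(x_0,\,t_0+2l-x_0)$ on $e_z$. On the other edges, $v_2$, $v_3$ and $v_0$ are either zero or $O(h)$ there. This needs a timing check: $v_0$ at time $t$ restricted to $e_z$ only involves phases from the direct part, and any part of it propagated into the other edges lives at times where $v_2v_3$ is negligible. I would verify this by an explicit support computation, as in Figure \ref{fig_interval}.

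Substituting into \eqref{a-integral} then gives, as $h\to0$,
\[ -\tfrac{n-2}{n}\int\!\!\int a\,\chi^2(t-x-s)\,\chi^2(t+x-2l-t_0)\,dx\,dt. \]
By Lemma \ref{CGO-Y}, $n$ is determined by the known graph, so the prefactor is a known constant.

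\textbf{Step 3: the degenerate case.} If $n\ne2$ the prefactor is nonzero, and we can divide by it. The same change of variables and delta-approximation argument as in Lemma \ref{interval} then recovers $a(x_0,t_0+2l-x_0)$.

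The case $n=2$ is degenerate, but then the center is a vertex of degree $2$ and the graph is just an interval of length $2l$. There we apply Lemma \ref{interval} on $[0,2l]$, which needs $T>4l$. Alternatively, we argue by using the wave returning at time $t_0+3l$. I expect this degenerate case, together with verifying that no other reflected components contribute to the product near the intersection point, to be the main obstacle.

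\textbf{Step 4: sweeping the domain.} Range $x_0\in(0,l)$ and $t_0\in(0,T-2l)$; since $x=d(x,z)$ on $e_z$, this covers $2l-d(x,z)< t< T-d(x,z)$. Continuity of $a$ then gives the closed domain stated in the lemma.
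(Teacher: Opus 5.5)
Your Steps 1, 2 and 4 are the paper's proof. Lemma \ref{CGO-Y} replaces the reflection coefficient $-1$ of Lemma \ref{go-reflect} by $-\frac{n-2}{n}$ on $e_z$, and the argument of Lemma \ref{interval} is then repeated verbatim. The timing check you postpone (that $v_0$ and $v_2$ have disjoint supports off $e_z$ once $2b<s-t_0$) is the observation the paper writes out in the proof of Proposition \ref{star-diff}.

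Your concern in Step 3 is legitimate, and the paper does not address it either. For $n=2$ the leading-order reflected amplitude vanishes, so the geometric-optics argument breaks down. The paper's proof therefore tacitly assumes that the center has degree $n\ge 3$, which fits the main theorem being stated only up to vertices of degree $2$.

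Your proposed remedy, however, does not prove the lemma as stated. Lemma \ref{interval} on $[0,2l]$ requires $T>4l$ and recovers $a$ only on $\{4l\le x+t\le T\}$. On $e_z$ this is strictly smaller than $\{2l-d(x,z)\le t\le T-d(x,z)\}$, and it is empty when $2l<T\le 4l$. Your alternative has the same problem: the wave returning through the center at time $t_0+3l$ reaches $z$ only at $t_0+4l$, so it also needs $T>4l$.

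The correct fix is to make $n\ge 3$ an explicit hypothesis, not to route the degenerate case through Lemma \ref{interval}.
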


\begin{proof}
    Let $v_S$ be the internal vertex of the star graph. We identify the edge connecting $z$ and $v_S$ with the interval $(0,l)$, and $z$, $v_S$ with $0$, $l$, respectively. We send the waves $v_0$, $v_1$, $v_2$, and $v_3$ as in Lemma \ref{interval}. Then $v_0$, $v_2$, $v_3$ behave the same way in $(0,l)\times (0,2l)$. The only difference is that $v_1$ reflects at $v_S$ with certain coefficient. Namely, by Lemma \ref{CGO-Y},
    \begin{equation}
        v_1(x,t) = e^{ih^{-1}(t - x)} \chi(t - x)  - \frac{n - 2}{n}e^{ih^{-1}(t + x -2l)} \chi(t + x -2l) + o(1)
    \end{equation}
    as $h\rightarrow 0$. Further, we repeat the steps of Lemma \ref{interval} to show that $\Lambda_T$ determines $a$ on the edge connecting $z$ and $v_S$, on the same domain as in Lemma \ref{interval}.
    {Since $x\in e_z$ is identified with $d(x,z)\in [0,l]$ in this parametrization, the domain in Lemma \ref{interval} takes the unparametrized form as the one stated above.}
\end{proof}

\begin{proposition} \label{star-diff}
For any graph, suppose that control and observation are available on a vertex $z$ of degree 1 (i.e., $\Gamma_0=\{z\}$). {Let $e_z$ denote the edge incident to $z$ and $l$ be its length.
Assume $T>2l$.
Then $\Lambda_T$ uniquely determines $a$ on the edge $e_z$ in the same domain as in Lemma \ref{star-equal}.}
%$$\Big\{(x,t)\in e_0\times [0,T]: 2l-d(x,z_0)\leq t\leq T-d(x,z_0)\Big\}.$$
\end{proposition}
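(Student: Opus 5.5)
We will reduce to the same four-wave construction as in Lemma~\ref{interval} and Lemma~\ref{star-equal}, using identity \eqref{a-integral}. Identify $e_z$ with $[0,l]$, with $z$ at $0$ and the other endpoint $v$ at $l$. Fix $x_0\in(0,l)$ and $t_0\in(0,T-2l)$, set $s=2(l-x_0)+t_0$, and take $\chi$ supported in a small interval $(-b,b)$. The sources are as follows. The waves $v_2$ and $v_3=\overline{v_2}$ are geometric optics waves sent from $z$ at time $s$. The wave $v_1$ is a geometric optics wave sent from $z$ at time $t_0$. The wave $v_0$ is the time-reversed geometric optics wave sent from $z$ at time $2l+t_0$. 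All four are given on $e_z$ by \eqref{eq_v} up to $O(h)$, as long as they have not yet reached $v$.

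The key observation is that the integrand $a v_0v_1v_2v_3$ is only large on $e_z$ near the point $(x_0,(2l+t_0+s)/2)$. Outside $e_z$ and away from that region, the product is small in the limit. To see this, note that $v_2,v_3$ are supported (mod $O(h)$) near the characteristic $t-x=s$ inside $e_z$ up to time $s+l$. The wave $v_0$ is supported near the backward characteristic $t+x=2l+t_0$ for $t\le 2l+t_0$. That line leaves $e_z$ through $v$ at time $l+t_0<s+l$.

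Beyond $e_z$, every wave is supported near times $t\ge d(z,x)+t_0-b$ or later. In particular $v_2v_3$ vanishes up to $O(h)$ at points $x\notin e_z$ with $t< s+d(z,x)-b$. The backward wave $v_0$ vanishes at points off $e_z$ once $t> 2l+t_0-d(z,x)+b$ (finite speed of propagation applied in reversed time). Since $s+d(z,x)\ge s+l> 2l+t_0-d(z,x)$ for $d(z,x)\ge l$ and $x_0<l$, the supports are disjoint there for small $b$. So the unknown geometry beyond $v$ plays no role, and no explicit formula for waves on the rest of the graph is needed. Finite speed of propagation on metric graphs gives this support property directly.

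On $e_z$ itself, $v_1$ consists of its incoming part plus a reflected part $c\,e^{ih^{-1}(t+x-2l-t_0)}\chi(t+x-2l-t_0)+O(h)$ for $t\le 2l+t_0$. Here the reflection coefficient at $v$ is $c=-(\deg v-2)/\deg v$ by Lemma~\ref{CGO-Y}, and $c=-1$ if $v$ is a Dirichlet leaf (Lemma~\ref{go-reflect}). Later reflections from beyond $v$ return to $e_z$ only after time $2l+t_0+2\min(\text{adjacent lengths})$. For $b$ small they do not meet $v_0$'s support on $e_z$; if needed, shrink $b$ accordingly.

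The main obstacle is the case $\deg v=2$, where $c=0$: the reflected wave is then invisible at leading order. Degree-$2$ vertices are undetectable anyway, so we merge the two edges at $v$ into one longer edge, on which $q$ is $C^1$ piecewise. Then we argue on the longer edge, taking the first genuine vertex, or iterating the merge. The domain on the original $e_z$ is still covered, because the new length exceeds $l$.

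For $c\neq0$, the integral reduces exactly as in Lemma~\ref{interval}. We divide by the known nonzero constant $c$; here $\deg v$ is known from the recovered tree, or from $\Lambda_T^{\rm lin}$. Letting $h\to0$ and shrinking $\chi^2$ to a delta function recovers $a(x_0,(2l+t_0+s)/2)$. Ranging $s$ and $t_0$, and using continuity at the boundary lines, gives $a$ on $\{2l-d(x,z)\le t\le T-d(x,z)\}$.
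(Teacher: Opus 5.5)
For $\deg v\neq 2$ your argument is the paper's proof. You use the same four waves as in Lemma \ref{interval} and the same finite-speed-of-propagation observation that $v_0$ and $v_2$ have disjoint supports off $e_z$ once $2b<s-t_0$. You track $d(z,x)$ pointwise, where the paper uses the uniform times $l+t_0+b$ and $l+s-b$. The reduction on $e_z$ to Lemma \ref{star-equal} is also the same, and treating both cases of \eqref{eq_interfere} at once is fine.

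Two statements need correcting, though neither affects the conclusion:
\begin{enumerate}
\item Reflections of $v_1$ from beyond $v$ can enter $e_z$ well before time $2l+t_0$; this is exactly the paper's second case. What you actually need is that they lie in $\{t+x\ge 2l+t_0+2m-b\}$, where $m$ is the smallest length of the other edges at $v$. That keeps them away from the main part of $v_0$.
\item On $e_z$ the wave $v_0$ is not concentrated only near $t+x=2l+t_0$: its backward reflections occupy $\{t-x\le t_0+b\}$. This region misses the support of $v_2$ because $s-b>t_0+b$, which the paper checks explicitly.
\end{enumerate}

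The step that actually fails is the degree-$2$ patch. At such a vertex the leading-order reflection vanishes. After merging, the first usable reflection is at distance $L'>l$ from $z$. The construction then recovers $a$ only on $\{2L'-d(x,z)\le t\le T-d(x,z)\}$, and only if $T>2L'$. This misses the strip $2l\le t+d(x,z)<2L'$ on $e_z$ and gives nothing when $2l<T\le 2L'$. A longer edge shrinks the recovery domain rather than enlarging it, so your claim that the domain on the original $e_z$ is still covered is backwards.

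The paper does not treat this case. Its proof relies on the coefficient $-(n-2)/n$ of Lemma \ref{star-equal} being nonzero. That is consistent with the convention of Theorem \ref{main-tree} that vertices of degree $2$ are not counted, so $l$ is the distance from $z$ to the nearest vertex of degree $\neq 2$. State that convention and delete the degree-$2$ paragraph; the rest of your proposal is then complete.
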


Note that in Proposition \ref{star-diff} we do not specify the boundary conditions at other boundary vertices than $z$ because they are irrelevant to our reconstruction process on $e_z$. The boundary conditions can be arbitrary at other boundary vertices than $z$ in Proposition \ref{star-diff}.

\begin{proof}
%\begin{figure}
%    \begin{minipage}{0.42\textwidth}
        %\includegraphics[width=0.4\linewidth]{Star_graph}
        %\caption{Setting of Proposition \ref{star-diff}.}
        %\label{fig_diff}
%    \end{minipage}
%    \begin{minipage}{0.42\textwidth}
%        \includegraphics[width=1\linewidth]{Star_graph2}
%        \caption{Lemma \ref{star-diff}: two intersections}
%        \label{fig_mul2}
%    \end{minipage}
%\end{figure}
Let $\Omega$ be any graph  and $z$ be the one vertex of degree $1$ where we have control and observation. 
Let $y$ be the other endpoint of the edge $e_z$ than $z$.
If 
\begin{equation} \label{eq_interfere}
l=\textrm{Length}(e_z)< 2 \min\Big\{ \textrm{Length}(e): e \textrm{ is incident to } y, \ e\neq e_z \Big\},
\end{equation}
then up to time $2l$, the reflected waves at $V\setminus \{z,y\}$ do not propagate into $e_z$. Hence this situation reduces to Lemma \ref{star-equal}. 

If \eqref{eq_interfere} is not true, then the reflected waves from $V\setminus \{z,y\}$ propagate into $e_z$, as illustrated in grey color in Figure \ref{fig_diff}. 
The edge $e_z$ is parametrized as the interval $[0,l]$ with the boundary vertex $z$ corresponding to $x=0$.
The linear waves $v_0,v_1,v_2,v_3$ are the same as in Lemma \ref{star-equal}.
In the figure, the reflected waves of $v_1$ from vertices other than $z,y$ are in grey above the blue line.
Although there are multiple intersections between blue lines and grey lines, they only intersect with the support of $v_0$ at only one (red) region over $e_z$. 
Indeed, the reflected waves of $v_0$ at vertices other than $z,y$ are located below the $t=x+t_0$ line.
On the other hand, notice that the supports of the waves $v_0,v_1,v_2$ do not intersect on $E\setminus e_z$ up to time $2l$.
This is due to the following observation: the wave $v_1$ vanishes on $E\setminus e_z$ on time interval $[0,l+t_0-b)$, while the wave $v_0$ vanishes on $E\setminus e_z$ on time interval $(l+t_0+b,2l+2t_0]$, where $b$ is the size of the cut-off function in geometric optics.
However, the wave $v_2$ only arrives on $E\setminus e_z$ after time $l+s-b$. This means that if we choose $2b<s-t_0$ in the proof of Lemma \ref{interval}, the wave $v_2$ vanishes on $E\setminus e_z$ on time interval $[0,l+t_0+b] \subset [0,l+s-b)$.
So the left-hand side of the integral \eqref{a-integral} has no contribution from $E\setminus e_z$.
Thus the same argument as in Lemma \ref{star-equal} determines $a$ on the same domain.
\end{proof}

\section{Inverse problem for trees}
\label{sec-general}

To handle the general situation on finite metric trees, we need the following lemmas.

\begin{lemma} \label{star-interfere}
Consider a $3$-edge star graph where control and observation are available on two leaves $z_1,z_2$ (i.e., $\Gamma_0=\{z_1,z_2\}$), and let $z_0$ be the other leaf.
Denote by $e_i$ the edge incident to $z_i$ and by $l_i$ its length.
{Assume $l_1=l_2$ and $T>2(l_1+l_0)$.
Then $\Lambda_T$ uniquely determines $a$ on the domain
$$\big\{(x,t)\in e_j\times [0,T]: 2l_j-d(x,z_j)\leq t\leq T-d(x,z_j)  \big\},\quad j=1,2, \textrm{ and }$$
$$\big\{ (x,t)\in e_0\times [0,T]:  l_1+l_0+d(x,z_0) \leq t \leq T-l_1-l_0+d(x,z_0)\big\}.$$}
\end{lemma}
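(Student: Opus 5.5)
The plan splits into two parts: the two controlled edges $e_1,e_2$, and the hidden edge $e_0$.

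\textbf{The edges $e_1,e_2$.} Proposition \ref{star-diff} applies to each leaf separately. The map $\Lambda_T$ on $\Gamma_0=\{z_1,z_2\}$ contains the data obtained by using sources supported only at $z_j$ and observing only at $z_j$. The proof of Proposition \ref{star-diff} does not depend on the boundary conditions at the other leaves. Since $T>2(l_1+l_0)>2l_j$, this gives $a$ on $\{2l_j-d(x,z_j)\le t\le T-d(x,z_j)\}\cap(e_j\times[0,T])$ for $j=1,2$.

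\textbf{The edge $e_0$: construction.} The idea is to use the assumption $l_1=l_2$ to cancel the reflections in $e_1\cup e_2$. Parametrize $e_0$ by $y=d(x,v_S)\in[0,l_0]$, where $v_S$ is the centre of the star.

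1. Send the geometric optics source $f_h$ of \eqref{source}, delayed by $t_0$, simultaneously and with the same sign from $z_1$ and $z_2$. Because $l_1=l_2$, the two pulses reach $v_S$ at the same time $l_1+t_0$. By the vertex computation of Lemma \ref{CGO-Y} (solve the Kirchhoff--Neumann system with two incoming amplitudes equal to $1$ and unknown outgoing amplitudes), the outgoing coefficients on $e_1$ and $e_2$ are $c=(2\cdot 2)/3-1=1/3$ each, and the transmitted coefficient on $e_0$ is $4/3$. The key point is that this is a single pulse with a nonzero explicit constant travelling along $e_0$ as $e^{ih^{-1}(t-y-l_1-t_0)}\chi(\cdot)+O(h)$.
2. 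Take $v_1$ to be this symmetric wave, continued past the Dirichlet reflection at $z_0$ as in Lemma \ref{go-reflect}. Its reflected part on $e_0$ is $-\tfrac43 e^{ih^{-1}(t+y-2l_0-l_1-t_0)}\chi(\cdot)$.
3. Take $v_2,v_3=\overline{v_2}$ to be symmetric waves of the same kind, sent at the delayed time $s$.
4. Take $v_0$ to be the time reversal of a symmetric wave, arriving at $z_1,z_2$ at time $2(l_1+l_0)+t_0$.

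On $e_0$ the four supports meet at a single small diamond near $y_0$ when $s=2(l_0-y_0)+t_0$, exactly as in Lemma \ref{interval}.

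\textbf{The edge $e_0$: the main obstacle.} The hard step is to show that the integral \eqref{a-integral} picks up no other contributions on $e_1\cup e_2$, and to control the returning reflections.

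1. The wave $v_2$ is sent at $s>t_0$, and we choose $2b<s-t_0$. The support argument of Proposition \ref{star-diff} then shows that the supports of $v_0,v_1,v_2$ can overlap only on $e_0$ in the relevant time window.
2. The reflections with coefficient $1/3$ going back into $e_1,e_2$ bounce off the controlled leaves. Because of the time reversal, $v_0$ is supported on $e_j$ only along the characteristic entering from $v_S$ near time $l_1+l_0+\ldots$. The remaining question is whether these reflected pieces could re-enter $e_0$ before the recovery time. I would check via the characteristic diagram that they re-enter $e_0$ only after time $2l_1+l_1+t_0$. Since only the lowest-order term matters and the supports are small, they do not meet the red diamond. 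If they do meet it, the fallback is to pick $b$ small and exploit the fact that generic intersections in $1+1$ dimensions of three characteristics with $v_0$ have measure $O(b^2)$, whereas only the fourfold transversal intersection contributes after dividing by the normalisation.

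\textbf{Conclusion.} Letting $h\to0$ and then $\chi^2\to\delta$ gives $-\tfrac{16}{9}\cdot\tfrac{4}{3}\cdot(\ldots)\,a(y_0,t)$ with a known nonzero constant, so $a$ is determined. Varying $t_0\in(0,T-2(l_1+l_0))$ and $s$, and then taking limits by continuity, yields $a$ on $l_1+l_0+d(x,z_0)\le t\le T-l_1-l_0+d(x,z_0)$, after rewriting in terms of $d(x,z_0)=l_0-y$.
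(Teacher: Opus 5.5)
\textbf{There is a genuine gap in your treatment of $e_0$.} Two claims are false: that the four supports meet in a single diamond, and that nothing is picked up on $e_1\cup e_2$. The fallback does not rescue them.

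Your symmetric excitation synchronizes the reflections at $v_S$ but does not cancel them. The outgoing amplitude on $e_1,e_2$ is $\frac13$. In fact no choice of two input amplitudes kills both outgoing waves at a degree-$3$ vertex: the outgoing amplitudes are $-\frac13\alpha+\frac23\beta$ and $\frac23\alpha-\frac13\beta$. Hence, after the Dirichlet reflections at $z_1,z_2$, each of $v_1,v_2$ carries a train of pulses entering $e_0$:
\begin{itemize}
\item $v_1$ enters along $t-y=(2m+1)l_1+t_0$ with amplitudes $\frac43(-\frac13)^m$;
\item $v_2$ enters along $t-y=(2m+1)l_1+s$, with $y=d(x,v_S)$.
\end{itemize}
The product $v_0v_1$ is non-oscillating along the whole common characteristic $t+y=l_1+2l_0+t_0$. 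On $e_0$ its amplitude is $(-\frac43)\cdot\frac43$. It continues into $e_j$ as $t-w=l_1+2l_0+t_0$, $w=d(x,v_S)$, with amplitude $(-\frac89)\cdot 1$.

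Every pulse of $v_2$ crossing this characteristic produces a diamond on which $v_0v_1|v_2|^2$ is non-oscillating. With $s=2(l_0-y_0)+t_0$, these diamonds lie at $y=y_0-ml_1$ on $e_0$ and at $w=ml_1-y_0$ on $e_1$ and $e_2$. For example, the $-\frac13$ pulse of $v_2$ gives a diamond at $w=l_1-y_0$ on each of $e_1,e_2$ whenever $y_0\le l_1$. So there are always contributions from $e_1\cup e_2$, and when $l_0>l_1$ there are further diamonds on $e_0$ itself.

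These secondary diamonds have exactly the same structure as your main one: a common characteristic of $v_0,v_1$ crossed by a line carrying $|v_2|^2$. They have the same $O(b^2)$ area and a nonzero $h$-independent amplitude, so they survive in the limit at the same order. Your fallback's contrast between measure $O(b^2)$ and the main term is therefore mistaken, since the main term is itself an $O(b^2)$ diamond.

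Your step 1 also misuses Proposition~\ref{star-diff}. Its support argument only rules out interactions on the part of the graph beyond the target edge (here beyond $z_0$, as in Lemma~\ref{subtree}). It says nothing about the edges $e_1,e_2$, which the waves traverse first and keep bouncing in.

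\textbf{What is missing is the mechanism of the paper's proof.}
\begin{itemize}
\item \emph{Subtract the $e_1,e_2$ contributions.} The diamonds on $e_j$ occur at times $t=l_1+2l_0+t_0+w\le T-l_1+w$. This is inside the region where $a$ is already known on $e_j$ by Proposition~\ref{star-diff}, so their contribution can be computed and subtracted.
\item \emph{Recover $a$ on $e_0$ inductively.} Work along $t+y=l_1+2l_0+t_0$ from $v_S$ outwards in steps of length $l_1$. For $y_0\in[0,l_1)$ the main diamond is the only one on $e_0$. For $y_0\in[ml_1,(m+1)l_1)$ the other diamonds on $e_0$ sit at $y_0-l_1,\dots$, where $a$ has already been recovered, so they can be subtracted.
\item \emph{Avoid exceptional delays.} Discard the finitely many delays $s$ for which $v_2$ passes through the isolated crossing points of the supports of $v_0$ and $v_1$ (condition \eqref{hatA}), and fill in those values by continuity.
\end{itemize}
With these ingredients your two-leaf symmetric excitation would work. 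However, it gains nothing over the paper's one-sided excitation from $z_1$, because $l_1=l_2$ already makes the reflected pulses enter $e_0$ in $2l_1$-increments.
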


\begin{proof}
By Proposition \ref{star-diff}, $\Lambda_T$ uniquely determines $a$ on the edges $e_1,e_2$ in the domain stated in Lemma \ref{star-equal}. Hence we focus on the other edge $e_0$.
%where $z_0$ is the other boundary vertex where we do not have control.
%Denote by $e_i$ the edge incident to the boundary vertex $z_i$, $i=0,1,2$,
%and by $l_i$ the length of the edge $e_i$.
%We assume that $l_1=l_2$.
For convenience, we parametrize the edges $e_1$ and $e_0$ together as $[0,l_1+l_0]$, where $z_1$ corresponds to $x=0$; $z_0$ corresponds to $x=l_1+l_0$; $x=l_1$ corresponds to the internal vertex.
We focus on the behavior of waves on $e_0$, i.e., on $[l_1,l_1+l_0]$.

\begin{figure}
    \begin{minipage}{0.47\textwidth}
        \includegraphics[width=0.75\linewidth]{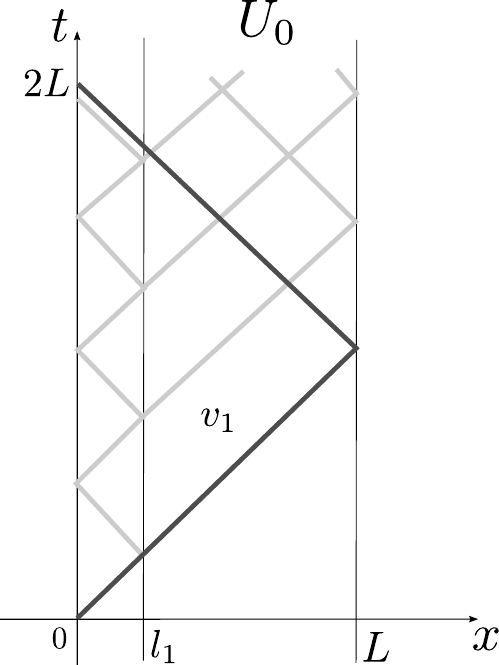}
        \caption{The support of the wave $v_1$ (translated so that the wave starts from the origin).}
        \label{v1}
    \end{minipage}
    \begin{minipage}{0.47\textwidth}
        \includegraphics[width=0.87\linewidth]{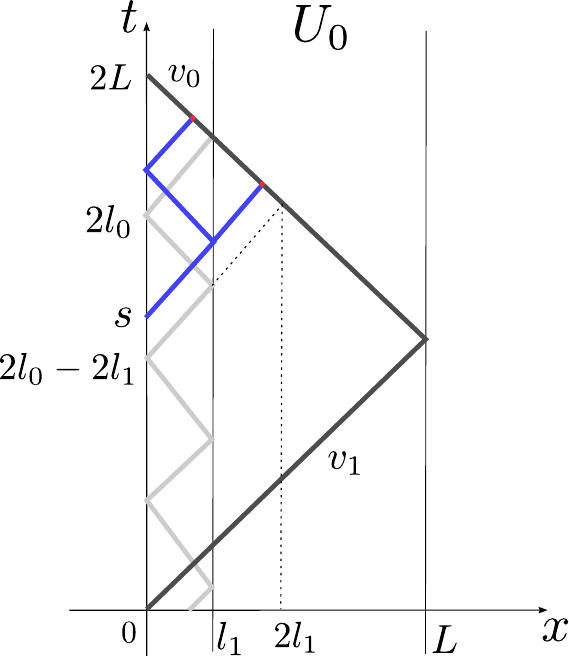}
        \caption{A two-step procedure for reconstructing $a$ on $[l_1,L]$. The blue lines represent the support of $v_2,v_3$.}
       \label{procedure}
    \end{minipage}
\end{figure}

Let $L=l_1+l_0$ so that $T>2L$ by assumption.
As before, we send geometric optics solutions $v_0,v_1,v_2,v_3$ from $z_1$, where $v_1$ is sent at time $t=t_0>0$; $v_2,v_3$ are sent at a delayed time $s>t_0$;
$v_0$ is sent at time $t=2L+t_0$.
What we need to figure out is where these four waves intersect.
{Recall that the requirement of $t_0>0$ is due to the initial condition of the equation \eqref{eq-wave-nonlinear}.
However, for the sake of explaining our construction, it is enough to demonstrate for $t_0=0$.}
We focus on the region over $e_0$, i.e., $$U_0:=[l_1,L]\times \mathbb [0,2L],$$ 
where the second coordinate is the time coordinate.

The support of the wave $v_1$ up to time $2L$ is illustrated in Figure \ref{v1}, where multiple reflections occur. Since we assume $l_1=l_2$, the reflected waves (grey lines) always come into $e_0$ in $2l_1$-increment.
In $U_0$,
$$\bigcap_{b>0}\textrm{supp}(v_1) \cap U_0 \subset \bigcup_{m\in \mathbb{N}} \{t-x=2m l_1\} \cup \{t+x=2L+2ml_1\}.$$
Similarly,
$$\bigcap_{b>0}\textrm{supp}(v_0) \cap U_0 \subset \bigcup_{m\in \mathbb{N}} \{t-x=-2m l_1\} \cup \{t+x=2L-2ml_1\}.$$
Recall that the waves are supported on a thin neighborhood of size $b$, where $b$ is the parameter in the cutoff function, see \eqref{source}.
Although these two supports above intersect frequently, one can see that the intersection is actually two lines union a finite number of points:
\begin{equation}\label{supp-01}
\bigcap_{b>0}\textrm{supp}(v_0) \cap \textrm{supp}(v_1) \cap U_0 =\{t=x\}\cup \{t+x=2L\} \cup \hat{A},
\end{equation}
where $\hat{A}$ is a finite set of points in $U_0$.
This gives much clarity about the intersections of four waves when we take the delayed waves $v_2,v_3$ into account as follows.
%However, the intersection on $[0,l_1]$ can be complicated. For example when $l_1$ divides $L$, the waves $v_0,v_1$ coincide on $[0,l_1]$, and the four waves can intersect at lines when delayed wave $v_2$ is sent at those specific delayed time. For our purpose, one can very well excludes these delayed time and recover them by continuity. In this lemma, one can include those specific delayed time, as the information required is included in the domain of recovery already obtained. However, in general construction, one has to be careful when the construction is done one step further.

\smallskip
First, we consider $v_2,v_3$ with delayed time parameter $s$ close to $2L$. 
If $2l_0=2L-2l_1<s<2L$ and $v_2$ satisfies
\begin{equation}\label{hatA}
\bigcap_{b>0}\textrm{supp}(v_2) \cap \hat{A} =\emptyset,
\end{equation}
%There exists a time $\hat{s}_0 \in (0,2L)$ such that if we send the waves $v_2,v_3$ with delayed time $$s\in (\hat{s}_0,2L), \quad \bigcap_{b>0}\textrm{supp}(v_2) \cap A_{01} =\emptyset,$$
there is no intersection region between $v_0,v_1,v_2,v_3$ in $U_0$ when $b$ is sufficiently small, see Figure \ref{procedure}.
In view of the finiteness of $\hat{A}$, the condition \eqref{hatA} excludes at most a finite number of $s$, since $\bigcap_{b>0} \textrm{supp}(v_2)$ is a finite collection of lines with slopes either $1$ or $-1$.

Next, we lower the time parameter $s$. There exists a time $\hat{s}_1 \in (0,2l_0)$ such that if we send the waves $v_2,v_3$ with delayed time $s\in (\hat{s}_1,2l_0)$ subject to \eqref{hatA},
%$$s\in (\hat{s}_1,2l_2), \quad \bigcap_{b>0}\textrm{supp}(v_2) \cap \hat{A} =\emptyset,$$
then there is only one intersection region between $v_0,v_1,v_2,v_3$ in $U_0$ when $b$ is sufficiently small, see Figure \ref{procedure}.
One can verify that $\hat{s}_1=2l_0-2l_1$ in the present situation, due to our assumption that $l_1=l_2$.
Since we assume to know the graph structure and edge lengths, we know the form of the waves by Lemma \ref{CGO-Y}, in particular, the coefficients depending on how the waves are reflected.
Then following the method in Section \ref{sec-linearization}, one can determine
\begin{equation} \label{lower-s-1}
C  a(L-\frac{s}{2},L+\frac{s}{2}) + \textrm{an integral over }e_1,e_2,
\end{equation}
where $C$ is some known constant. 
This integral over $e_1,e_2$ depends only on the pointwise values of $a$ at some points on $e_1,e_2$ where the four waves intersect.
%Consider if $L/l_1\in \mathbb{Z}$, the $v_1,v_0$ waves completely coincide on $e_1$. so the four waves should have multiple intersections on $e_1$.
On $e_1$, the waves $v_1,v_2$ can intersect only above (including) the line $x+t=2l_1$.
%it is enough to look at these two waves. $v_2$ is sent delayed than $v_1$.
On $e_2$, any intersection occurs only after time $t\geq l_1+d(x,v_S)=l_1+l_2-d(x,z_2)$, $x\in e_2$, where $v_S$ is the internal vertex of the star graph,
because this is the shortest travel time needed for the waves reaching $x\in e_2$ from $z_1$.
%The actual intersection time on $e_2$ is strictly larger than the shortest travel time (except endpoints). The following two situations may happen.
%(1) The intersection only occurs after reflection of $v_1$ on $z_2$. Then the time is at least $l_1+l_2$.
%(2) $v_2$ coincides on $e_2$ with the reflected wave of $v_1$ after hitting $z_0$. In this case, the intersection time is at least $l_1+2l_0+d(x,v_S)$.
On either $e_j$, $j=1,2$, the intersections with the wave $v_0$ must satisfy $t\leq 2L-d(x,z_1)$, $x\in e_j$.
%this is the shortest time to reach $z_1$ at time $2L$.
Recall that $T>2L$ and $d(x,z_1)\geq l_1=l_2\geq d(x,z_2)$ for $x\in e_2$.
Hence, the integral over $e_1,e_2$ in \eqref{lower-s-1} can be computed, since
$a$ is already recovered on $e_1,e_2$ in the domain $\{(x,t)\in e_j\times [0,T]: 2l_j\leq t+d(x,z_j)\leq T\}$ for $j=1,2$ due to Proposition \ref{star-diff}.
%Note that we have used the same parametrization on $e_2$ as $e_1$, namely $z_2$ corresponding to $x=0$.
%The parametrizations on $e_1,e_2$ are exactly the same.
This procedure determines $a$ on 
$$\big\{x+t=2L, \; x\in [l_1, L-\hat{s}_1/2]=[l_1,2l_1]\big\}$$ 
except for a finite number of points excluded by the condition \eqref{hatA}, which determines $a$ on $\{x+t=2L,\, x\in [l_1, 2l_1]\}$ by continuity. 

Note that if $l_1\geq l_0$ the procedure stops after the previous step. Otherwise,
we keep lowering $s$ up to some time $\hat{s}_2$ while avoiding the finite set $\hat{A}$, more precisely $\hat{s}_2=2l_0-4l_1$, and we have two intersection regions in $U_0$ for sufficiently small $b$. Thus one determines the sum
$$C  a(L-\frac{s}{2},L+\frac{s}{2}) + C'  a(x',2L-x') + \textrm{an integral over }e_1,e_2,$$
where $x'=L-s/2-l_1\in [l_1,2l_1]$ in our present situation. Since we have already solved $a$ for $x\in [l_1,2l_1]$, this determines $a$ on the line $\{x+t=2L\}$ for $x\in [2l_1,3l_1]$.
{Repeating this procedure determines $a$ on the line $\{x+t=2L,\, x\in [l_1,L]\}$ in finite steps.}
%Combining with Proposition \ref{star-diff}, we can determine $a$ on $\big\{ x+t=2L,\; x\in [0,L]  \big\}.$

Now instead of sending the wave $v_1$ from $t_0=0$, we vary $t_0$ in $(0,T-2L)$ and repeat the same construction above on the line $\{x+t=2L+t_0\}$. 
Proposition \ref{star-diff} is valid since $2l_1 < 2L+t_0< T$.
Then the same arguments above determine $a$ on 
$\big\{x+t=2L+t_0\in (2L,T), \, x\in [l_1,L] \big\}$, which determines $a$ on $\big\{2L \leq x+t \leq T, \, x\in [l_1,L] \big\}$ by continuity.
The latter domain can be translated to the following unparametrized form:
$$\big\{(x,t)\in e_0\times [0,T]: 
L+d(x,z_0) \leq t \leq T-L+d(x,z_0) \big\},$$
as stated in the lemma.
%Identifying $x$ with $L-d(x,z_0)$.
\end{proof}

Lemma \ref{star-interfere} is still valid when one has control on all but one boundary vertices and the edges incident to these boundary vertices have equal length. The proof stays the same. The reason that the equal length case looks mild is that the trajectories of the reflected wave from those boundary vertices "synchronize".
For the general case, i.e., a star graph with different lengths, the situation looks messier but the approach still works.

\begin{lemma} \label{star-general}
For any star graph, 
suppose that one has control and observation on the boundary vertices $z_1,z_2,\cdots,z_{n}\in \Gamma$, while does not have control and observation on the other boundary vertex $z_0\in \Gamma$. Denote by $e_i$ the edge incident to $z_i$ and by $l_i$ its length.
Let $l_1=\max\limits_{1\leq j\leq n}l_j$ and assume $T>2(l_1+l_0)$.
Then $\Lambda_T$ uniquely determines $a$ on the domain
$$\big\{(x,t)\in e_j\times [0,T]: 2l_j-d(x,z_j)\leq t\leq T-d(x,z_j)  \big\},\quad j=1,2,\cdots,n, \textrm{ and }$$
$$\big\{ (x,t)\in e_0\times [0,T]:  l_1+l_0+d(x,z_0) \leq t \leq T-l_1-l_0+d(x,z_0)\big\}.$$
\end{lemma}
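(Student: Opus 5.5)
The plan follows the proof of Lemma \ref{star-interfere}; the main change is that with unequal lengths the reflected waves no longer synchronize. First, Proposition \ref{star-diff} applied at each $z_j$, $j=1,\dots,n$, gives $a$ on each controlled edge $e_j$ in the domain $\{2l_j-d(x,z_j)\le t\le T-d(x,z_j)\}$. For this step only the other leaves must be ignored, which the remark after Proposition \ref{star-diff} allows. This uses $T>2l_j$, which holds since $T>2(l_1+l_0)$.

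For $e_0$, I send all four geometric optics waves $v_0,v_1,v_2,v_3$ from the leaf $z_1$ of the longest controlled edge. I parametrize $e_1\cup e_0$ as $[0,L]$ with $L=l_1+l_0$, and send $v_1$ at $t_0$, $v_2,v_3$ at a delay $s$, and $v_0$ at $2L+t_0$; it suffices to take $t_0=0$. By Lemma \ref{CGO-Y} and the delay equations \eqref{del}/\eqref{svi}, all reflection coefficients are known, since the tree and $q$ are known. Inside $U_0=[l_1,L]\times[0,2L]$, the limit support of $v_1$ consists of finitely many characteristic lines. They are the line $t=x$ together with lines delayed by nonzero elements of the finite set $\mathcal{T}=\{\sum_j 2k_jl_j\}\cap[0,2L]$, and the same holds for $v_0$ by time reflection. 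Hence $\bigcap_b\operatorname{supp}v_0\cap\operatorname{supp}v_1\cap U_0$ equals $\{t=x\}\cup\{t+x=2L\}$ plus a finite set $\hat A$, exactly as in \eqref{supp-01}. Every extra line of $v_1$ in $U_0$ has the form $t-x=\tau$ with $\tau>0$, or $t+x=2L+\tau$, and similarly for $v_0$. The identity $x-t=-\tau'$ coinciding with $t-x=\tau$ is impossible unless $\tau=\tau'=0$, so extra lines of $v_1$ and $v_0$ never coincide.

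Next I lower $s$ from $2L$, avoiding the finitely many values for which $v_2$ meets $\hat A$. The first intersection region in $U_0$ is the point $(L-s/2,L+s/2)$ on $\{t+x=2L\}$. Further regions appear only where a delayed branch of $v_2$, with delay $\tau\in\mathcal{T}$, $\tau\ge 2\min_j l_j>0$, crosses $\{t+x=2L\}$. Such a region lies at a point $x'=L-(s+\tau)/2$, strictly to the left of $L-s/2$. Since $\tau$ is bounded below, I induct on $x$ in steps of size $\min_j l_j$. The plan is to recover $a$ on $\{x+t=2L,\ x\in[l_1,l_1+k\min_j l_j]\}$ successively. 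At each step the identity \eqref{a-integral} gives $C\,a(L-s/2,L+s/2)$ with known $C\ne 0$ (a product of transmission coefficients $2/\deg$ and leading ones). The remaining terms are a finite sum of known constants times values of $a$ at already recovered points of $e_0$, plus contributions on $e_j$, $j\ge1$.

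The controlled-edge contributions have to be shown to lie in the domain already recovered. Any intersection with $v_1$ on $e_j$ occurs at $t\ge l_1+d(x,v_S)=l_1+l_j-d(x,z_j)\ge 2l_j-d(x,z_j)$, using $l_1\ge l_j$; this is exactly where maximality of $l_1$ enters. Intersections with $v_0$ satisfy $t\le 2L-d(x,z_1)\le T-d(x,z_j)$, since $d(x,z_1)\ge l_1\ge d(x,z_j)$. On $e_1$ the bound $t+x\ge 2l_1$ is immediate. Dividing by $C$ then gives $a$ on $\{x+t=2L\}\cap e_0$ except at finitely many points, and continuity fills these in. Finally, varying $t_0\in(0,T-2L)$ and using continuity at the endpoints yields $\{2L\le x+t\le T\}$ on $e_0$, which is the stated domain.

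The main obstacle I expect is the combinatorial bookkeeping. It must be shown that, for generic $s$ and small $b$, the intersection regions in $U_0$ lie only on $\{t+x=2L\}$ at points with smaller $x$. I also need the leading coefficient $C$ to be nonzero, so that no cancellation occurs. I would verify both from the finite-delay structure of \eqref{del}, whose coefficient at the minimal delay is explicitly $2/n$ times $-(n-2)/n$-type factors.
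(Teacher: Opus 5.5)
Your proposal is correct and follows the paper's proof essentially step for step. It uses Proposition \ref{star-diff} on the controlled edges and sends all four waves from the leaf $z_1$ of the longest controlled edge. It relies on the fact that in $U_0$ the supports of $v_0,v_1$ meet only along $\{t=x\}\cup\{t+x=2L\}$ plus finitely many points. It then lowers $s$ while inducting along $\{x+t=2L\}$ in steps of $\min_{1\le j\le n} l_j$, and uses maximality of $l_1$ to place the controlled-edge contributions inside the already recovered domains.

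The one thing to tidy is your closing remark on $C\neq 0$. At $(L-s/2,L+s/2)$ the product of the leading amplitudes is $-\bigl(2/(n+1)\bigr)^4$: each of $v_0,v_1,v_2,v_3$ passes through $v_S$ (degree $n+1$) with transmission factor $2/(n+1)$, and $v_1$ takes the Dirichlet reflection $-1$ at $z_0$. No reflection at $v_S$ enters, so $C$ is nonzero for every $n\ge 1$, including $n=1$.
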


\begin{proof}
Since we have control on $z_1,\cdots,z_{n}$, Lemma \ref{star-diff} gives $a$ on $e_1,\cdots,e_{n}$.
%Lemma \ref{star-interfere} solves the case when $l_1=\cdots=l_{n}$.
We follow the argument in Lemma \ref{star-interfere} to recover $a$ on $e_0$.
We parametrize the edge $e_1,e_0$ together by $[0,l_1+l_0]$ as before, where $z_1$ corresponds to $x=0$.
The waves $v_0,v_1,v_2,v_3$ are sent from $z_1$ in the same way as before, where $v_1$ is sent at time $t_0=0$. 
Denote $L=l_1+l_0$ so that $T>2L$ by assumption.
One can verify that the supports of $v_0,v_1$ still intersect on $e_0$ at two lines union a finite number of points (bigger than $\hat{A}$) as in \eqref{supp-01}.
%The support of $v_1$ on $e_0$ is within the region $\{t-x\geq 0\}\cup \{t+x\geq 2L\}$, the exact lines depending on the graph structure. The support of $v_0$ on $e_0$ is within the region $\{t+x\leq 2L\}\cup \{t-x\leq 0\}$. So these regions only intersect on $e_0$ at two main lines plus a finite number of points. However, the intersecton on $e_1,\cdots,e_n$ can be complicated, i.e., lines.
This means that if we choose the delayed time parameter $2l_0=2L-2l_1<s<2L$ and $v_2$ satisfies \eqref{hatA}, there is no intersection between the waves $v_0,v_1,v_2,v_3$ in $U_0=[l_1,L]\times [0,2L]$, when $b$ is sufficiently small.

Next, we lower the parameter $s$.
The difference between this case and Lemma \ref{star-interfere} is how soon one sees reflected waves on $e_0$. In Lemma \ref{star-interfere}, the reflected waves always come into $e_0$ in $2l_1$-increment due to lengths of $e_1,e_2$ being equal.
In the present case, the reflected waves comes in according to the smallest length, i.e. after time 
$$\hat{s}:= 2\min\{l_1,l_2,\cdots,l_{n}\}.$$
This means that if we choose $\hat{s}_1=2l_0-\hat{s}$ subject to \eqref{hatA}, there is only one intersection region between the four waves in $U_0$ when $b$ is sufficiently small. 
The part of the integral \eqref{a-integral} over the intersection of the waves on $e_j$, for $j=1,\cdots,n$, only depends on the pointwise values of $a$ on some points where the four waves intersect, similar to \eqref{lower-s-1}. 
On $e_1$, the waves $v_1,v_2$ can intersect only above (including) the line $x+t=2l_1$.
On $e_k$, $k=2,\cdots,n$, any intersection occurs only after time $t\geq l_1+d(x,v_S)=l_1+l_k-d(x,z_k)$, $x\in e_k$, where $v_S$ denotes the internal vertex of the star graph.
%see related comments in Lemma 5.1.
On all $e_j$, $j=1,\cdots,n$, the intersection points with the wave $v_0$ must satisfy $t\leq 2L-d(x,z_1)$, $x\in e_j$.
%see similar argument in Lemma 5.1.
Due to the maximal choice of the length $l_1$, for $x\in e_k$, $k=2,\cdots,n$, we see that $l_1+l_k\geq 2l_k$, and $d(x,z_1)\geq l_1\geq l_k\geq d(x,z_k)$.
Hence the required knowledge of $a$ is contained in $\{(x,t)\in e_j\times [0,T]: 2l_j \leq t+d(x,z_j) \leq T\}$ for $j=1,\cdots,n$, which are the domains of recovery obtained in Proposition \ref{star-diff}.
Then the same argument as Lemma \ref{star-interfere} gives $a$ on the line $\{x+t=2L\}$ for $x\in [l_1,L-\hat{s}_1/2]=[l_1,l_1+\hat{s}/2]$. 
The rest of proof is essentially the same as Lemma \ref{star-interfere} and we omit the details.
Repeating this procedure gives $a$ on the line $\{x+t=2L\}$ for $x\in [l_1,L]$ in finite steps due to $\hat{s}>0$.
Ranging $t_0$ in $(0,T-2L)$ recovers $a$ on the domain $\{2L\leq x+t \leq T\}$.
\end{proof}

\begin{lemma} \label{subtree}
In the setting of Lemma \ref{star-general},
let $z_0$ be the boundary vertex where one does not have control and observation.
Then attaching any graph to $z_0$ does not affect the ability to recover $a$ on the edge $e_0$ incident to $z_0$.
\end{lemma}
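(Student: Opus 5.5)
The argument rests on finite speed of propagation together with causality. We rerun the proof of Lemma \ref{star-general} verbatim on the new graph $\tilde\Omega$, obtained by attaching a graph $G$ at $z_0$. We then check that every ingredient used to recover $a$ on $e_0$ is unchanged in the relevant space-time region. As before, parametrize $e_1\cup e_0$ by $[0,L]$ with $L=l_1+l_0$, and send $v_1$ at $t_0$, $v_2,v_3$ at delay $s$, and $v_0$ at time $2L+t_0$, all from $z_1$. The recovery of $a$ on $e_0$ only uses the values of the four waves in a neighborhood of the lines $\{t-x=t_0\}$ and $\{t+x=2L+t_0\}$ on $U_0=[l_1,L]\times[t_0,2L+t_0]$, together with the (already recovered) values of $a$ on $e_1,\dots,e_n$. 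The point is to show that the extra reflections created by $G$ do not produce new four-wave intersection regions that we cannot account for.

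The key steps are as follows. First, $v_1$ (the earliest wave) first reaches $z_0$ at time $L+t_0-b$. Anything entering $G$ and reflecting back into $e_0$ therefore occupies only the region $\{t+x\ge 2L+t_0-b\}$ plus later reflections. The wave $v_0$, being time-reversed, is supported in $\{t+x\le 2L+t_0+b\}$ on the part that could see $G$. By symmetry its own reflections from $G$ lie in $\{t-x\le t_0+b\}$. Second, and for the same reason, $v_2$ arrives in $G$ only after $L+s-b$, while $v_0$ vanishes on $G$ after time $L+t_0+b$. Choosing $2b<s-t_0$ as in Proposition \ref{star-diff} then makes the product $v_0v_1v_2v_3$ vanish on $G$, so the integral \eqref{a-integral} gets no contribution from $G$. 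Third, on $e_0$ the waves differ from those in Lemma \ref{star-general} only by modified reflection coefficients at $z_0$; the Dirichlet coefficient $-1$ is replaced by a known coefficient given by Lemma \ref{CGO-Y}, since the graph is known. Components returning from deeper inside $G$ lie on additional characteristic lines. The intersection of $\bigcap_b\textrm{supp}(v_0)\cap\textrm{supp}(v_1)$ with $U_0$ is still the two lines $\{t-x=t_0\}$, $\{t+x=2L+t_0\}$ plus a finite set. This is because $v_1$ lives in $\{t-x\ge t_0\}\cup\{t+x\ge 2L+t_0\}$, $v_0$ lives in $\{t-x\le t_0\}\cup\{t+x\le 2L+t_0\}$, and each consists of finitely many characteristic lines up to time $2L+t_0$. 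Finally, the peeling procedure in $s$ (first no intersection, then one, then several, each time subtracting already recovered values) goes through unchanged, with the condition \eqref{hatA} still excluding only finitely many $s$.

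The main obstacle is the third step. For $s$ near $2L$ we must make sure that the new waves reflected from $G$ produce no four-fold intersection inside $U_0$ other than the red point on $\{t+x=2L+t_0\}$, including near $x=L$, where $v_1$ and its reflections from $z_0$ overlap. I would handle this by noting that any point of $\{t+x=2L+t_0\}\cap\{t-x=t_0+s\}$ with $x<L$ uses only the incoming part of $v_1$ reflected at $z_0$. Its amplitude is a known nonzero constant: the coefficient $(2-m)/m$ at a vertex of degree $m$ is nonzero unless $m=2$, and degree-2 vertices are invisible and can be removed. If this coefficient vanishes, I would instead use the symmetric configuration with $v_0$ and $v_1$ interchanged. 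Points at $x=L$ itself are then recovered by continuity.
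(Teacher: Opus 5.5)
Your proposal is correct and follows the paper's proof. Your first two steps are exactly the paper's two observations: reflections of $v_1$ from the attached graph lie above $t+x=2L+t_0$ and those of $v_0$ below $t-x=t_0$, and $v_0$, $v_2$ have disjoint supports on the attached graph once $2b<s-t_0$. Your third step usefully makes explicit what the paper leaves tacit, namely that the Dirichlet coefficient $-1$ at $z_0$ is replaced by the known coefficient $(2-m)/m$, which is nonzero for $m\neq 2$.

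One correction: your fallback of interchanging $v_0$ and $v_1$ when $m=2$ would not work. By time-reversal symmetry, $v_0$ also has vanishing leading-order reflection at $z_0$. The correct and sufficient remedy is the one you also give, that degree-two vertices are removed; this holds in the application to trees, where every attachment point then has degree at least $3$.
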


\begin{proof}
This is simply due to the observation that the left-hand side of the integral \eqref{a-integral} has no contribution from the additionally attached subgraph, with the choice of $v_0,v_1,v_2,v_3$ in Lemma \ref{star-general}.
Recall that we also used this observation in Lemma \ref{star-diff}.
More precisely, we send geometric optics solutions $v_0,v_1,v_2,v_3$ from $z_1$ satisfying $l_1=\max \{l_1,\cdots,l_n\}$, where $v_1$ is sent at time $t=t_0$; $v_2,v_3$ are sent at a delayed time $s>t_0$;
$v_0$ is sent at time $t=2L+t_0$.
On one hand, the attached subgraph does not generate additional intersections on $e_0$, since the reflected wave of $v_1$ from the attached subgraph is only supported above the $x+t=2L+t_0$ line, while the reflected wave of $v_0$ from the attached subgraph is only supported below the $t-x=t_0$ line.
%both are strictly above and below.
On the other hand, the waves do not intersect on the attached subgraph when the size of the cut-off function in geometric optics solutions is small.
The latter is because the wave $v_0$ vanishes on the attached subgraph on time interval $(L+t_0+b,2L+2t_0]$, while the wave $v_2$ vanishes on the attached subgraph on time interval $[0,L+s-b)$, where $b$ is the size of the cut-off function in geometric optics. Thus if we choose $2b<s-t_0$, the supports of $v_0,v_2$ do not intersect on the attached subgraph.
\end{proof}

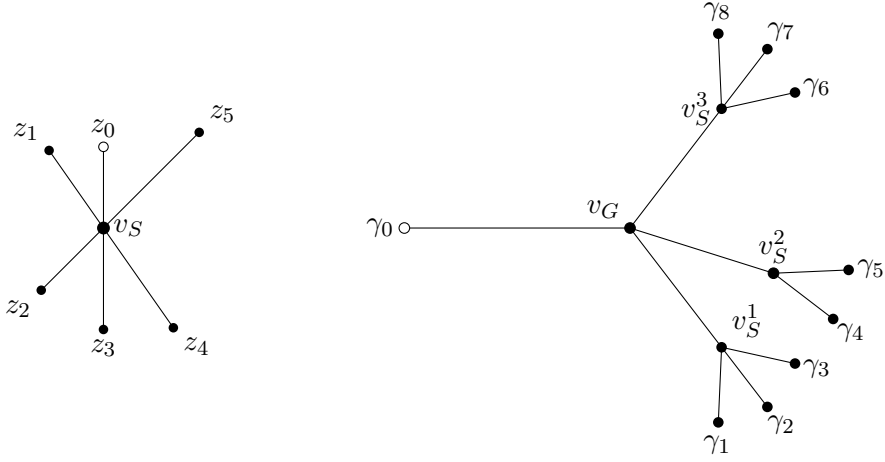
\begin{figure}[h]
%\begin{center}
\centering
 \begin{tikzpicture}[scale=1,grow cyclic,
% every child/.style={fill,circle (2pt)},
 level 1/.style={level distance=3cm,sibling angle=60},
 level 2/.style={level distance=2cm,sibling angle=35},
 level 3/.style={level distance=1cm,sibling angle=40}]
 \coordinate (g1) 
 child { child {child {edge from parent node[pos=1.3] {$\gamma_1$}}  child  {edge from parent node[pos=1.3] {$\gamma_2$}} child {edge from parent node[pos=1.3] {$\gamma_3$}}
 }
         child {child {edge from parent node[pos=1.3]  {$\gamma_4$}} child {edge from parent node[pos=1.3]  {$\gamma_5$}}}
         child[missing]
         child {child {edge from parent node[pos=1.3]  {$\gamma_6$}} child {edge from parent node[pos=1.3]  {$\gamma_7$}} child {edge from parent node[pos=1.3]  {$\gamma_8$}} }};
 \draw[fill=white]  circle (2pt) ;
 \draw node[left] {$\gamma_0$} (g1-1)
 circle (2pt) node[above left] {$v_G$}
        (g1-1-4) node[left] {$v_S^3$}
        (g1-1-2) [fill] circle (2pt) node[above] {$v_S^2$}
        (g1-1-1) node[above right] {$v_S^1$} ;
        \foreach \j in {1,4}
        {\fill (g1-1-\j) circle (2pt);
        \foreach \i in {1,2,3}
        {
        \fill (g1-1-\j-\i) circle (2pt);
        }}
         \foreach \i in {1,2}
                {
                \fill (g1-1-2-\i) circle (2pt);
                }
%\draw (3,-4) node {Figure 7: A tree graph $G$.};

\begin{scope}[xshift=-4cm,scale=0.9]
 \fill (0,0) circle (2.7pt) node[right] {$v_S$};
 \draw[] (0,1.2) node[above] {$z_0$} -- (0,-1.5) node[below] {$z_3$} 
              (225:1.3cm) node[below left] {$z_2$} -- (45:2cm) node[above right] {$z_5$}
              (125:1.4cm) node[above left] {$z_1$} -- (-55:1.8cm) node[below right] {$z_4$};
 \draw[fill=white] (0,1.2) circle (2pt);
\fill       (0,-1.5) circle (2pt)
            (225:1.3cm) circle (2pt)
            (45:2cm) circle (2pt)
            (125:1.4cm) circle (2pt)
                        (-55:1.8cm) circle (2pt);
%\draw (-0.5,-4/1.1) node {Figure 6: A star graph $S$.};
\end{scope}
 \end{tikzpicture}
%\end{center}
\caption{Left: a star graph $S$. Right: a tree graph $G$.} 
\label{star-tree}
\end{figure}

Now we are able to complete the proof of Theorem \ref{main-tree}.
We consider an arbitrary metric tree graph $\Om$ with controls and observations at all but one of the boundary vertices; we will call this vertex a root and denote by $\ga_0$, see an illustration in Figure \ref{star-tree}(right). Let $\Ga_0:=\Ga \setminus \{\ga_0\} =
\{\ga_1,\ldots,\ga_{m-1}\}.$  
%We need the following definition.
 We consider a subgraph of $\Om$ which is a star graph 
consisting of {\sl all} edges incident to an internal vertex $v_S$, see Figure \ref{star-tree}(left). This star graph is called a \emph{sheaf}
if all but one of its edges are adjacent to the boundary vertices in $\Ga_0$.
It is well-known that any tree contains at least one sheaf (see, e.g. \cite{ACLM}).

\begin{proof}[Proof of Theorem \ref{main-tree}]
First, we reconstruct $a$ on each sheaf, i.e., each star subgraph $S$ of $\Omega$ with control and observation on all but one leaves.
Denote by $z_0$ the leaf of a sheaf $S$ where no control and observation are available, and by $v_S$ the internal vertex of $S$, see Figure \ref{star-tree}. 
Let $D_S(z_0)$ denote the maximal distance between $z_0$ and the other leaves $\{z_1,\cdots,z_{n}\}$ of $S$.
Denote by $e_i$ the edge incident to $z_i$ and by $l_i$ its length, for $i=1,\cdots,n$.
To distinguish the role of the edge incident to $z_0$ we denote the edge by $e_S$ here.
Lemma \ref{star-general} and \ref{subtree} determine $a$ on each sheaf in the domains stated in Lemma \ref{star-general}. Namely, the domains of recovery are
\begin{eqnarray} 
\big\{(x,t)\in e_j\times [0,T]: 2l_j-d_S(x,z_j)\leq t\leq T-d_S(x,z_j)  \big\},\; j=1,2,\cdots,n, \label{proof-domain-ej} \\
\big\{ (x,t)\in e_S\times [0,T]:  D_S(z_0)+d_S(x,z_0) \leq t \leq T-D_S(z_0)+d_S(x,z_0)\big\}, \label{proof-domain-e0}
\end{eqnarray}
where $d_S$ denotes the distance function on the star graph $S$.

%While the formulae for the domains of recovery on different edges differ, they can be unified as
%\begin{equation} \label{domain-sheaf}
%\big\{ (x,t)\in S \times [0,T]:  D_S(z_0)+d_S(x,z_0)\leq t \leq T-D_S(z_0)+d_S(x,z_0) \big\},
%\end{equation}
%where $d_S$ denotes the distance function on the star graph $S$.
%To see that this formula is valid on the edges $e_j$ incident to $z_j$ $(j=1,\cdots,n)$ where control and observation is available, one observes that
%$2l_j-d_S(x,z_j)=l_j+d_S(x,v_S) <D_S(z_0)+d_S(x,z_0)$ for any $x\in e_j$, and
%$$T-d_S(x,z_j)=T-(l_j+l_0-d_S(x,z_0)) \geq T-D_S(z_0)+d_S(x,z_0).$$

\smallskip
Next, we consider each subtree $G$ made of sheaves $S_1, \cdots, S_N$ joining together at a common vertex $v_G$, so that the points of no control and observation for each $S_k$ coincide at $v_G$, see Figure \ref{star-tree}.
Suppose that $v_G$ is connected to the leaf $\gamma_0$ where no control and observation is available.
Since we already recovered $a$ on the sheaves $S_1,\cdots S_N$, we only need to recover $a$ on the edge incident to $\gamma_0$.
Let $\gamma_1$ be a leaf of $G$ so that $d_G(\gamma_1,\gamma_0)=D_G(\gamma_0)$, where $D_G=D_G(\gamma_0)$ is the maximal distance between $\gamma_0$ and the other leaves of $G$.
Let us assume $\gamma_1\in S_1$ without loss of generality. 
One can repeat the proof in Lemma \ref{star-general} and \ref{subtree} by sending and observing waves at $\gamma_1$. The only difference is that the edges in Lemma \ref{star-general} are replaced by star graphs. Hence it suffices to verify that the domains of recovery \eqref{proof-domain-ej} and \eqref{proof-domain-e0} for the sheaves are sufficient to complete the reconstruction of $a$ on the edge incident to $\gamma_0$.

We still denote by $e_j$ the edge incident to the leaf $\gamma_j$ and by $l_j$ its length.
%the notation includes $\gamma_0,e_0,l_0$.
We parametrize the path $[\gamma_1 \gamma_0]$ from $\gamma_1$ to $\gamma_0$ together as an interval $[0,D_G]$ with $\gamma_1$ corresponding to $x=0$, and follow the same argument as in Lemma \ref{star-general} by sending waves $v_0,v_1,v_2,v_3$ from $\gamma_1$.
Our remaining task is to check that the intersections of the waves on each sheaf $S_k$ occur within the domains of recovery already obtained above.
On any sheaf $S_k$ for $k\in \{2,\cdots,N\}$, the intersection time of the waves must satisfy 
\begin{equation}
d_G(\gamma_1,v_G)+d_G(x,v_G) \leq t \leq T-d_G(\gamma_1,v_G)-d_G(x,v_G), \quad x\in S_k,\; k\in \{2,\cdots, N\}.
\end{equation}
%This is shortest time that the wave travel from $\gamma_1$ to $x\in S_k$ in other sheaves than $S_1$. The actual time is larger than this, since the wave has to be reflected in order to intersect.
Due to maximal choice of $\gamma_1$, we see that $d_G(\gamma_1,v_G)\geq D_{S_k}(v_G)$, and $2l_j-d_{S_k}(x,\gamma_j)=l_j+d_{S_k}(x,v_{S}^k)< d_G(\gamma_1,v_G)+d_{S_k}(x,v_G)$ for $x\in e_j\subset S_k$, where $v_S^k$ denotes the internal vertex of the star graph $S_k$. 
%$d_G(\gamma_1,v_G)\geq D_{S_k}(v_G)>l_j$ for any $\gamma_j\in S_k$.
Thus, the intersections of waves on $S_k$, $k=2,\cdots,N$, are contained in the domains of recovery \eqref{proof-domain-ej} and $\eqref{proof-domain-e0}$.

\begin{figure}[h]
    \begin{minipage}{0.47\textwidth}   \includegraphics[width=0.84\linewidth, angle=0.3]{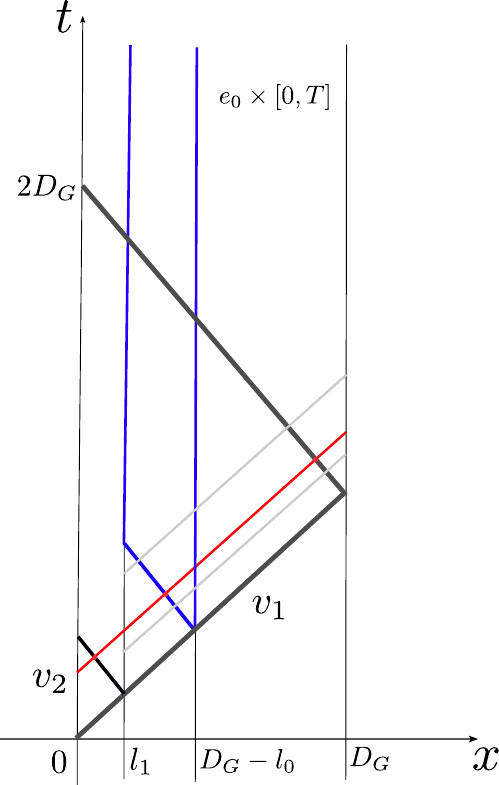}
        \caption{The intersection between the waves $v_1,v_2$ may occur in the middle region earlier than the domain \eqref{second-layer}, the one enclosed by the blue lines, if the wave $v_2$ is sent coinciding with the grey lines that are the reflections of $v_1$ from other leaves in $S_1$ than $\gamma_1$.}
        \label{fig_second-layer}
    \end{minipage}
    \begin{minipage}{0.47\textwidth}
        \includegraphics[width=0.84\linewidth, angle=0.3]{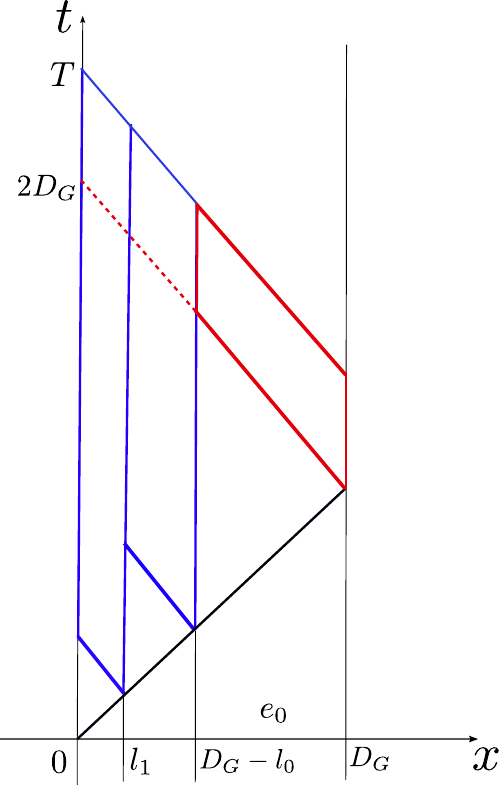}
        \caption{
         A three-step illustration for reconstructing coefficient $a$ on the edge $e_0$ incident to $\gamma_0$ in the domain enclosed by the red lines.
        Enclosed by the blue lines are the domains of recovery for $a$ on edges in previous two steps. 
        The restricted domain of recovery \eqref{Domain_recovery} is indicated by the dashed red line.}
       \label{fig-3-layers}
    \end{minipage}
\end{figure}

On the sheaf $S_1$, the intersection time of the waves on the complement of the path $[\gamma_1 \gamma_0]$ (from $\gamma_1$ to $\gamma_0$) satisfies
\begin{equation}
l_1+d_G(x,v_S^1) \leq t \leq T-l_1-d_G(x,v_S^1), \quad x\in S_1\setminus [\gamma_1 \gamma_0],
\end{equation}
which is clearly contained in the domains of recovery due to the maximal  choice of $\gamma_1$.
On the path $[\gamma_1 \gamma_0]\cap S_1$, the intersection time of the waves satisfies
\begin{equation}
l_1+d_G(x,v_S^1) \leq t\leq T-d_G(x,\gamma_1),\quad x\in e_1,
\end{equation}
%the wave $v_1$ has to first reflect at $v_S^1$ to be able to intersect with $v_2$.
and, up to excluding a finite number of lines of slope $1$,
\begin{equation} \label{second-layer}
d_G(\gamma_1,v_G)+d_G(x,v_G) \leq t \leq T-d_G(x,\gamma_1),\quad x\in S_1\cap [\gamma_1 \gamma_0] \setminus e_1,
\end{equation}
which exactly matches with the domains of recovery \eqref{proof-domain-ej} and $\eqref{proof-domain-e0}$ for $S_1\cap [\gamma_1 \gamma_0]$.
In the latter case, the intersections of the four waves can occur on $S_1\cap [\gamma_1 \gamma_0] \setminus e_1$ earlier than the domain \eqref{second-layer} when a reflection of the wave $v_1$ at other leaves in $S_1$ coincides with the delayed wave $v_2$ on $S_1\cap [\gamma_1 \gamma_0] \setminus e_1$, in which situation the possible intersections are a finite number of (grey) lines of slope 1, see Figure \ref{fig_second-layer}. 
%Namely, when $v_1$ reflected at $\gamma_2$ or $\gamma_3$ coincides with $v_2$ on $e(v_S^1 v_G)$, e.g. when $l_2$ or $l_3$ is small.
These possible intersection lines of slope $1$ correspond to sending the delayed waves $v_2,v_3$ at specific times, which is inconsequential to our construction as we can avoid sending $v_2,v_3$ at those specific times and recover $a$ at the missing points on $e_0\times [0,T]$ by continuity, similar to \eqref{hatA}.
%The support of $v_1$ on $e_0$ is within the region $\{t-x\geq 0\}\cup \{t+x\geq 2D_G\}$, the exact lines depending on the graph structure. The support of $v_0$ on $e_0$ is within the region $\{t+x\leq 2D_G\}\cup \{t-x\leq 0\}$. So these regions only intersect on $e_0$ at two main lines plus a finite number of points, exactly the same as before.
Therefore, repeating the arguments in Lemma \ref{star-general} and \ref{subtree} recovers $a$ on the edge $e_0$ incident to $\gamma_0$ in the domain
\begin{equation} \label{domain-recovery-final}
\big\{ (x,t)\in e_0\times [0,T]:  D_G(\gamma_0)+d_G(x,\gamma_0) \leq t \leq T-D_G(\gamma_0)+d_G(x,\gamma_0)\big\}.
\end{equation}
This construction is illustrated in Figure \ref{fig-3-layers}.
We assume $T>2D_G(\gamma_0)$ so that the domain of recovery \eqref{domain-recovery-final} is nonempty.
Repeating the constructions recovers $a$ on the original tree $\Omega$ in finite steps.
%For general formulation, one considers the path from $\gamma_0$ realizing the largest distance to other leaves, say $\gamma_1$, then send waves from $\gamma_1$. One can parametrize this path by interval $[0,D_G]$, with marker points in the interval representing each vertex on the path. One follows the construction from $x=0$, and recovers the next segment one by one. There would be clearly no issue with the upper time end. For lower time end, the figure shows the domains of recovery match each other exactly, except the situation when the reflection of wave $v_1$ coincides with $v_2$. This generates finite number of intersection lines of slope 1 on known segments, depending on the graph structure. Thus, one only needs to send $v_2$ avoiding all these intersection lines, and recover $a$ on the next segment, except at the missing points corresponding to the intersection lines which can be recovered by continuity. 
%Effectively, recovering $a$ on the whole graph in the restricted domain \eqref{Domain_recovery} is the same as recovering $a$ on the interval $[0,D_G(\gamma_0)]$.

To verify the formulation of the domain \eqref{Domain_recovery} in Theorem \ref{main-tree}, one observes that the same formula \eqref{domain-recovery-final} for each sheaf $S_k$,
$$\big\{ (x,t)\in S_k\times [0,T]:  D_G(\gamma_0)+d_G(x,\gamma_0) \leq t \leq T-D_G(\gamma_0)+d_G(x,\gamma_0)\big\}, \quad k=1,\cdots,N,$$
is a subset of the actual domains of recovery \eqref{proof-domain-ej} and \eqref{proof-domain-e0} for each $S_k$.
Indeed, this is due to $D_G(\gamma_0)-l_0 \geq D_{S_k}(v_G) > l_j$, and $D_G(\gamma_0)-d_G(x,\gamma_0)\geq l_0+D_{S_k}(v_G)-d_G(x,\gamma_0)=D_{S_k}(v_G)-d_G(x,v_G)$, for any $\gamma_j\in S_k$ and $x\in S_k$.
%$v_G$ is the $z_0$ in the formulae.
%When $x\in e_j$ for $\gamma_j\in S_k$, then $D_{S_k}(v_G)-d_G(x,v_G)\geq d(v_G,\gamma_j)-d(x,v_G)=d(x,\gamma_j)$ for any $\gamma_j\in S_k$.
%This is argued for each fixed sheaf $S_k$ and any leaves in the sheaf $S_k$.
Thus, the formulae for the domains of recovery for different edges can be unified as \eqref{Domain_recovery} but at the cost of restricting to smaller domains, see Figure \ref{fig-3-layers}.
\end{proof}

\section{Direct problem} \label{direct}
Let $\Omega=\Omega(V,E)$ be a finite metric tree, and $\Gamma$ be the set of leaves of $\Omega$. 
%Let us introduce a few notations. 
For $T>0$, we set 
\begin{equation*}
    E^0 := C([0,T]; L^2(\Omega)),
    \quad
    E^1 := C([0,T]; H^1(\Omega)) \cap C^1([0,T]; L^2(\Omega)),
\end{equation*}
and equip these spaces with the norms 
\begin{equation*}
    \|u\|_{E^0} = \sup_{t\in [0,T]} \|u(\cdot, t)\|_{L^2(\Omega)},
    \quad
    \|u\|_{E^1} = \sup_{t\in [0,T]} \|u(\cdot, t)\|_{H^1(\Omega)} + \sup_{t\in [0,T]} \|\partial_t u(\cdot, t)\|_{L^2(\Omega)}.
\end{equation*}

\begin{proposition}\label{Prop_linear_energy_est} 
Let $T>0$ and $q\in L^\infty(\Omega)$. Assume that $F\in L^1([0,T]; L^2(\Omega))$ and $f \in H^{3/2}([0,T]; \mathbb{R}^{|\Gamma|})$ satisfies $f = 0$ near $\{t=0\}$ and $\{t=T\}$. Then the boundary value problem 
\begin{equation}\label{lin_eq} 
\begin{cases} 
    (\partial_{t}^2 -\partial_x^2 ) u(x,t)+q(x) u(x,t)=F(x,t), &\text{in }(\Omega\setminus V)\times (0,T),\\ 
    u(\cdot,t)|_{\Gamma}=f(t), & \text{on } \Gamma\times(0,T),\\ 
    u(x,t)|_{t \leq 0}=0, & \text{in } \Omega,\\ 
    u \textrm{ satisfies the Kirchhoff-Neumann condition on } V\setminus\Gamma,
\end{cases} 
\end{equation} has a unique solution $u \in E^1$ satisfying the energy estimate
\begin{equation}\label{est_energy_linear_case}
    \|u\|_{E^1} \leq C\left(\|f\|_{H^{3/2}([0,T]; \mathbb{R}^{|\Gamma|})} + \|F\|_{L^1([0,T]; L^2(\Omega))}\right),
\end{equation}
where $C$ depends only on $T$, $q$, and $\Omega$.

Finally, if assume additionally that 
\begin{equation}\label{add_assumptions}
    F\in C^1([0,T]; L^2(\Omega)),
    \qquad
    F\arrowvert_{t=0}=0,
    \qquad
    \text{and }
    f \in H^{3 + 1/2}([0,T]; \mathbb{R}^{|\Gamma|}),
\end{equation}
then for any edge from $E$, $e\simeq(0,l)$, it follows that $u\arrowvert_{e}(\cdot, t) \in H^2([0,l])$ for all $t\in [0,T]$.
\end{proposition}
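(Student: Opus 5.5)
The plan is to reduce to the case of homogeneous boundary data and then apply the standard Galerkin/semigroup energy method for the Schr\"odinger operator $\mathcal{L}$ on the tree. First I will lift the boundary data. For each leaf $\gamma\in\Gamma$, I fix a smooth cutoff $\phi_\gamma$ on $\Omega$. It equals $1$ near $\gamma$, is supported in a small neighbourhood of $\gamma$ inside its incident edge, and vanishes near every interior vertex. I then set $P(x,t)=\sum_{\gamma}\phi_\gamma(x)f_\gamma(t)$. Since the cutoffs are constant near the vertices, $P$ satisfies the Kirchhoff--Neumann conditions trivially. Writing $u=P+w$, the function $w$ must solve \eqref{lin_eq} with zero Dirichlet data on $\Gamma$ and right-hand side
$$\tilde F=F-(\partial_t^2-\partial_x^2+q)P=F-\sum_\gamma\big(\phi_\gamma f_\gamma''-\phi_\gamma'' f_\gamma+q\phi_\gamma f_\gamma\big).$$
The main point is to control $\tilde F$ in $L^1([0,T];L^2)$ by $\|f\|_{H^{3/2}}$. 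The term $f''$ is only in $H^{-1/2}$, so this naive lifting is not directly enough. I would therefore instead construct $P$ on each leaf edge as the solution of the free one-dimensional wave equation $(\partial_t^2-\partial_x^2)P=0$ with data $f_\gamma$ at $x=0$, namely $P=\phi_\gamma(x)f_\gamma(t-x)$. Then $\tilde F$ only involves $\phi'_\gamma f'_\gamma$, $\phi''_\gamma f_\gamma$ and $q\phi_\gamma f_\gamma$, which lie in $C([0,T];L^2)$ with norm $\lesssim\|f\|_{H^{3/2}}$. Here $H^{3/2}\subset C^1$ fails only marginally, so I will use $H^{3/2}\subset H^1$ in $t$ together with translation invariance to get $L^2_x$ bounds uniformly in $t$. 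The same bound gives $\|P\|_{E^1}\lesssim\|f\|_{H^{3/2}}$. Because $f$ vanishes near $t=0$, $P$ vanishes for $t\le 0$, provided $\phi_\gamma$ has support shorter than the time before $f$ turns on, or after extending $f$ by zero.

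Next I solve for $w$ with homogeneous data. The operator $\mathcal{L}$ with domain $\mathcal{H}^2\cap\mathcal{H}^1_0$ is self-adjoint and bounded below on $L^2(\Omega)$, since $q\in L^\infty$. Its form domain is $\mathcal{H}^1_0$, and integrating by parts edge by edge, the vertex terms cancel thanks to \eqref{s-d}. Expanding $w$ in the eigenfunctions of $\mathcal{L}$, or using Duhamel's formula with the cosine family $\cos(t\sqrt{\mathcal{L}+c})$, gives a unique $w\in E^1$. Multiplying by $\partial_t w$ and integrating gives the energy identity, and Gronwall's inequality then yields $\|w\|_{E^1}\le C\|\tilde F\|_{L^1L^2}$. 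Combining this with the lifting bound proves \eqref{est_energy_linear_case}. For uniqueness, the difference of two solutions solves the homogeneous problem with zero data, so its energy vanishes.

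For the final regularity claim, I will differentiate in time. Under \eqref{add_assumptions}, $u_t$ solves the same problem with data $(F_t,f')$, where $f'\in H^{5/2}\subset H^{3/2}$. The compatibility conditions hold because $F|_{t=0}=0$ and $f$ vanishes near $0$. By the first part, $u_t\in E^1$, so $u_{tt}\in C([0,T];L^2)$. Then on each edge $u_{xx}=u_{tt}+qu-F\in L^2$ for every $t$, which gives $u|_e(\cdot,t)\in H^2$. The main obstacle I expect is the lifting step: I must check carefully that the $H^{3/2}$-in-time norm controls the $L^1L^2$ norm of the residual and the $E^1$ norm of $P$ in a way that is compatible with the vanishing near $t=0$ and $t=T$.
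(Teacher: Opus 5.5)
Your proof is correct. Its skeleton matches the paper's: lift the Dirichlet data on the leaf edges, solve the homogeneous-Dirichlet problem with an $L^1([0,T];L^2(\Omega))$ energy bound, and add the two. Your spectral/energy--Gronwall treatment of the homogeneous problem is a repackaging of the paper's semigroup for the bounded perturbation $\mathcal{A}_0+\mathcal{B}$ of a skew-adjoint operator.

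The real differences are in the other two steps.

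\emph{Lifting.} The paper lifts $f|_v$ by an arbitrary $w_e\in H^2([0,l]\times[0,T])$ obtained from the inverse trace theorem. This is exactly why $H^{3/2}$ appears, since $\partial_t^2 w_e$ must lie in $L^2$. Your outgoing wave $\phi_\gamma(x)f_\gamma(t-x)$ leaves only the residual $2\phi_\gamma' f_\gamma'(t-x)-\phi_\gamma'' f_\gamma(t-x)+q\phi_\gamma f_\gamma(t-x)$. So you actually obtain the estimate with $\|f\|_{H^1}$ in place of $\|f\|_{H^{3/2}}$, and a residual in $C([0,T];L^2(\Omega))$. This is more elementary and sharper, but specific to one space dimension, whereas the trace-extension argument is dimension-free.

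\emph{$H^2$ regularity.} The paper invokes Kato's regularity theorem for $C^1$ forcing vanishing at $t=0$, together with an $H^4$ lifting; this is why it assumes $f\in H^{3+1/2}$. Your time differentiation needs only $f'\in H^{3/2}$, or even $f'\in H^1$ with your lifting.

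The one step to write out carefully is the identification $u_t=\tilde u$, where $\tilde u\in E^1$ solves the problem with data $(F_t,f')$. A priori $u_t$ is only in $C([0,T];L^2(\Omega))$, so argue as follows. Set $U(t)=\int_0^t\tilde u(s)\,ds$ and check that $U\in E^1$ solves the problem with data $(F,f)$; this is exactly where $F|_{t=0}=0$, $f(0)=0$ and $\partial_t\tilde u|_{t=0}=0$ enter. Uniqueness then gives $U=u$, hence $u_{tt}=\partial_t\tilde u\in C([0,T];L^2(\Omega))$. Finally, $u_{xx}(\cdot,t)=u_{tt}(\cdot,t)+qu(\cdot,t)-F(\cdot,t)\in L^2(e)$ for every $t$ on each edge, which is the claim.
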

%\HOX{M.N.: I required more conditions on $f$}

%\HOX{M.N.: And the estimate is a bit weaker than in \cite[Proposition 1]{LassasLiimatainenPotenciano-MachadoTyni}}

\begin{proof}
    \emph{Step 1: The case $f=0$.} Define a sesquilinear form
    \begin{equation*}
        \mathbf{a}(u,v) = \int_\Omega \partial_x u \overline{\partial_x v} dx,
    \end{equation*}
    with domain $H_0^1(\Omega)$. This yields the nonnegative self-adjoint operator $A$ corresponding to $-\partial_x^2$ with Kirchhoff condition on $V\setminus\Gamma$.

    Consider the space $X = H_0^1(\Omega) \times L^2(\Omega)$, equipped with norm
    \begin{equation*}
        \|(u,v)\|_X = \left(\mathbf{a}(u,u) + \|v\|_{L^2(\Omega)}^2\right)^{1/2}.
    \end{equation*}
    Since $\Gamma \neq \emptyset$, $X$ is a Hilbert space. Define operators 
    \begin{equation*}
        \mathcal{A}_0 = 
        \begin{bmatrix}
            0 & -I\\
            A & 0
        \end{bmatrix},
        \qquad
        \mathcal{B} = 
        \begin{bmatrix}
            0 & 0\\
            q & 0
        \end{bmatrix}
    \end{equation*}
    with the domains $D(\mathcal{A}_0) = D(A)\times H_0^1(\Omega)$ and $D(\mathcal{B}) = X$. Finally, we set $\mathcal{A} = \mathcal{A}_0+ \mathcal{B}$. Since $\mathcal{B}$ is bounded, $D(\mathcal{A}) = D(\mathcal{A}_0)$. Since $-\mathcal{A}$ is a bounded perturbation of the skew-adjoint operator $-\mathcal{A}_0$, it generates strongly continuous semigroup $\{T(t)\}_{t>0}$. Then,
    \begin{equation*}
        \mathcal{U}^F = 
        \begin{bmatrix}
            u^F\\v^F
        \end{bmatrix}
        = \int_0^t T(t - s) \mathcal{F}(s)ds,
        \qquad
        \text{where } 
        \mathcal{F} =
        \begin{bmatrix}
            0\\
            F
        \end{bmatrix},
    \end{equation*}
    solves the equation 
    \begin{equation*}
    \begin{cases}
        \frac{d}{dt} \mathcal{U}^F = - \mathcal{A} \mathcal{U}^F + \mathcal{F},\\
        \mathcal{U}^F(0) = 0.
    \end{cases}
    \end{equation*}
    In particular, $v^F = \partial_t u^F$, and hence, $u^F \in E^1$. Moreover,
    \begin{equation}\label{est_uF}
        \|\mathcal{U}^F\|_{X} \leq 
        \int_0^t \|T(t - s) \mathcal{F}(s)\|_Xds\leq 
        C \int_0^t \|\mathcal{F}(s)\|_Xds = C\|F\|_{L^1([0,T]; L^2(\Omega))}.
    \end{equation}
    Using the Poincare inequality, we complete the proof for the case $f=0$. 

    \emph{Step 2: The case $F=0$.}
    Fix a leaf $v\in\Gamma$ and its incident edge $e\simeq(0,l)$ with endpoint $x=l$ corresponding to $v$. Then there exists a function $w_e\in H^{2}([0,l]_x\times [0,T]_t)$ with
    \begin{equation*}
        w_e = 0 \quad \text{near } \{t=0,T\}\cup \{x=0\}, 
        \qquad 
        w_e\arrowvert_{x=l} = f\arrowvert_v,
    \end{equation*}
    and satisfying 
    \begin{equation}\label{est_w_f}
        \|w_e\|_{H^2([0,l]\times [0,T])} \leq C \|f\arrowvert_v\|_{H^{3/2}([0,T])}.
    \end{equation}
    This can be done by using Sobolev extension theorem and multiplication by a cutoff function which is $1$ near the support of $f\arrowvert_v$ and zero near the boundaries corresponding to $t=0,T$ and $x=0$. 

    The above inequality implies 
    \begin{equation*}
        \sup_{t\in [0,T]} \|w_e(\cdot, t)\|_{H^1(\Omega)} + \sup_{t\in [0,T]} \|\partial_t w_e(\cdot, t)\|_{L^2(\Omega)} \leq C \|f\arrowvert_v\|_{H^{3/2}([0,T])}.
    \end{equation*}
    Set $w=\sum_{v\in\Gamma} w_e$ on boundary edges and $w\equiv0$ elsewhere. Then $w$ satisfies the trivial initial condition, the Kirchhoff-Neumann condition on $V\setminus\Gamma$, and
    \begin{equation}\label{est_for_w}
        \|w\|_{E^1} \leq C \|f\|_{ H^{3/2}([0,T]; \mathbb{R}^{|\Gamma|})}.
    \end{equation}
    We set $u^f = v + w$, where $v$ is the solution to the equation
    \begin{equation*}
    \begin{cases}
        (\partial_t^2-\partial_x^2)v+qv=-(\partial_t^2-\partial_x^2)w - qw, & \text{in }(\Omega\setminus V)\times (0,T),\\
        v=0, & \text{on } \Gamma\times(0,T),\\
        v|_{t\leq 0}=0, & \text{in } \Omega,\\
        v \text{ satisfies the Kirchhoff-Neumann condition on } V\setminus\Gamma.
    \end{cases}
    \end{equation*}
    Since $w_e \in H^2(e\times [0,T])$, we know that 
    \begin{equation*}
        -(\partial_t^2-\partial_x^2)w - qw \in L^2([0,T]; L^2(\Omega)) \subset L^1([0,T]; L^2(\Omega)).
    \end{equation*}
    Therefore, from the first step, we obtain
    \begin{align*}
        \|v\|_{E^1} &\leq C \|(\partial_t^2-\partial_x^2)v+qv\|_{L^1([0,T]; L^2(\Omega))} \\
        & \leq C \sum_{e\in E} \left( \|\partial_t^2 w\|_{L^2(e\times [0,T])} + \|\partial_x^2 w\|_{L^2(e\times [0,T])} + \|q w\|_{L^2(e\times [0,T])}\right)\\
        & \leq C \sum_{e\in E} \|w\|_{H^2(e\times [0,T])}.
    \end{align*}
    Here, the constant $C$ depends only on the graph, $T$, and $q$. By estimate \eqref{est_w_f}, 
    \begin{equation}\label{est_for_v}
        \|v\|_{E^1} \leq C \|f\|_{ H^{3/2}([0,T]; \mathbb{R}^{|\Gamma|})}.
    \end{equation}
    Therefore, due to \eqref{est_for_w}, we derive that
    \begin{equation}\label{est_uf}
        \|u^f\|_{E^1} \leq C \|f\|_{ H^{3/2}([0,T]; \mathbb{R}^{|\Gamma|})}.
    \end{equation}

    \emph{Step 3: General case.}
    Finally, by linearity, $u=u^F+u^f$ solves \eqref{lin_eq} with estimate \eqref{est_energy_linear_case} following from \eqref{est_uF} and \eqref{est_uf}. Uniqueness follows from the group property or an energy identity for homogeneous data.

    \emph{Step 4: Under assumptions \eqref{add_assumptions}.} Since $\mathcal{A}_0$ is skew-adjoint and $\mathcal{B}$ is bounded, we know that $-\mathcal{A} = -\mathcal{A}_0 - \mathcal{B}$ is closed densely defined and satisfies
    \begin{equation*}
        \|(\mathcal{A} + \lambda I)^{-k}\|_{X\rightarrow X} \leq (\lambda - \|\mathcal{B}\|_{X\rightarrow X})^{-k},
        \qquad
        k\in \mathbb{N},
        \text{ }
        \lambda> \|\mathcal{B}\|_{X\rightarrow X}.
    \end{equation*}
    Therefore, by \cite[Theorem 1.19, IX, Ch. 5]{Kato1976} and \cite[(1.49), IX, Ch. 5]{Kato1976}, it follows that $\mathcal{U}^F \in D(\mathcal{A})$, and hence, $u^F\in D(A)$. In particular, $u^F\arrowvert_{e}\in H^2([0,l])$ for any $e\simeq [0,l]$. 

    Since $f \in H^{3 + 1/2}([0,T]; \mathbb{R}^{|\Gamma|})$, we can additionally require $w_e$ to be in $H^{4}([0,l]\times [0,T])$, where $e\simeq [0,l]$. Then,
    \begin{equation*}
        -(\partial_t^2-\partial_x^2)w - qw \in C^1([0,T]; L^2(\Omega))
    \end{equation*}
    and, as we showed above, $v\arrowvert_{e} (\cdot, t) \in H^2([0,l])$, for $e\simeq [0,l]$. By construction of $w$, this is true for $u^f$, and hence, for $u=u^F+u^f$.
\end{proof}

\begin{lemma}\label{lemma_for_direct_problem}
    Let $q\in L^\infty(\Omega)$, $L>0$, and $a\in C^1(\Omega\times [0,T])$ be such that $\|a\|_{C^1(\Omega\times [0,T])} <L$. There exist $\kappa$, $\rho>0$ such that if $f \in H^{3/2}([0,T]; \mathbb{R}^{|\Gamma|})$ satisfies $\|f\|_{ H^{3/2}([0,T]; \mathbb{R}^{|\Gamma|})}<\kappa$ and $f = 0$ near $\{t=0\}\cup\{t=T\}$, then there is a unique solution to 
    \begin{equation}\label{non_linear_eq}
    \begin{cases}
    (\partial_{t}^2 -\partial_x^2 ) u(x,t)+q(x) u(x,t)+a(x,t)u^3(x,t)=0, &x\in \Omega\setminus V, \ t \in (0,T),\\
    u(\cdot,t)|_{\Gamma}=f(t), & t\in (0,T),\\
    u(x,t)|_{t \leq 0}=0, & x\in \Omega, \\
          u \textrm{ subject to the Kirchhoff-Neumann condition on } V\setminus\Gamma,
    \end{cases}
    \end{equation}
    in the ball
    \begin{equation*}
        B_\rho(0) = \{u \in E^1: \; \|u\|_{E^1} \leq \rho\}.
    \end{equation*}
    Moreover, the solution satisfies the estimate
    \begin{equation*}
        \|u\|_{E^1} \leq C_{L,T} \|f\|_{ H^{3/2}([0,T]; \mathbb{R}^{|\Gamma|})}.
    \end{equation*}
\end{lemma}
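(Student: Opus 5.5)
We will prove the lemma with a contraction mapping argument, using Proposition \ref{Prop_linear_energy_est} as the linear solver. Let $u_0\in E^1$ be the solution of \eqref{lin_eq} with boundary data $f$ and $F=0$. By \eqref{est_energy_linear_case}, $\|u_0\|_{E^1}\le C\|f\|_{H^{3/2}}$. Write $u=u_0+w$, where $w$ has zero boundary data. Define the map $\Phi$ on $B_\rho(0)$ by letting $\Phi(u)=u_0+w$, where $w$ solves \eqref{lin_eq} with $f=0$ and $F=-a u^3$. A fixed point of $\Phi$ in $B_\rho(0)$ is exactly a solution of \eqref{non_linear_eq} in $B_\rho(0)$. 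The maps are set up so that the boundary values, the zero initial condition and the Kirchhoff--Neumann conditions are all carried by the linear problem.

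The key estimate is that the cubic term is controlled in $L^1([0,T];L^2(\Omega))$. On each edge $e\simeq(0,l)$ the one-dimensional Sobolev embedding $H^1(0,l)\hookrightarrow L^\infty(0,l)$ holds. Since $\Omega$ is a finite graph, this gives $\|u(\cdot,t)\|_{L^\infty(\Omega)}\le C_\Omega\|u(\cdot,t)\|_{H^1(\Omega)}$. Hence
\[
\|a u^3\|_{L^1([0,T];L^2(\Omega))}\le T\,L\,\sup_t\|u\|_{L^\infty}^2\|u\|_{L^2}\le C T L\,\|u\|_{E^1}^3 .
\]
Similarly, $u^3-\tilde u^3=(u-\tilde u)(u^2+u\tilde u+\tilde u^2)$ gives
\[
\|a(u^3-\tilde u^3)\|_{L^1L^2}\le CTL\,(\|u\|_{E^1}^2+\|\tilde u\|_{E^1}^2)\,\|u-\tilde u\|_{E^1}.
\]
Note that $a u^3\in C([0,T];L^2)$ because $u\in C([0,T];H^1)$ and $a$ is continuous, so the inhomogeneous term is admissible in Proposition \ref{Prop_linear_energy_est}.

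Combining these estimates with \eqref{est_energy_linear_case}, for $u,\tilde u\in B_\rho(0)$ we get
\[
\|\Phi(u)\|_{E^1}\le C_0\kappa + C_1 L\rho^3,\qquad \|\Phi(u)-\Phi(\tilde u)\|_{E^1}\le 2C_1L\rho^2\|u-\tilde u\|_{E^1},
\]
where $C_0,C_1$ depend on $T$, $q$ and $\Omega$. Choose $\rho$ so small that $2C_1L\rho^2\le 1/2$, and then choose $\kappa$ so that $C_0\kappa\le \rho/2$. Then $\Phi$ maps the closed ball $B_\rho(0)$ into itself and is a contraction there. $B_\rho(0)$ is a closed subset of the Banach space $E^1$, so Banach's fixed point theorem gives existence and uniqueness of the solution in $B_\rho(0)$. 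For the estimate, the fixed point satisfies
\[
\|u\|_{E^1}\le C_0\|f\|_{H^{3/2}}+C_1L\rho^2\|u\|_{E^1}\le C_0\|f\|_{H^{3/2}}+\tfrac14\|u\|_{E^1},
\]
which yields $\|u\|_{E^1}\le \tfrac43 C_0\|f\|_{H^{3/2}}$. This constant depends only on $L$ and $T$, together with the fixed $q$ and $\Omega$.

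The main subtlety is to check that $\Phi$ is well defined in $E^1$, i.e. that the solution of the linear problem with source $-au^3$ lies in $E^1$ and satisfies the vertex conditions. Step~1 of Proposition \ref{Prop_linear_energy_est} gives exactly this for $F\in L^1([0,T];L^2(\Omega))$, with the Kirchhoff condition built into the operator $A$. The uniformity of the Sobolev constant over the finitely many edges is immediate. Only the $C^0$ bound on $a$ is needed here; the $C^1$ assumption matters only for higher regularity, which this argument does not use.
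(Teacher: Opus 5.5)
Your proof is correct and takes essentially the same route as the paper. Both run a Banach fixed-point argument in $B_\rho(0)\subset E^1$ for the map sending $u$ to the linear solution with boundary data $f$ and source $-au^3$, using the energy estimate \eqref{est_energy_linear_case}. The only difference is that you write out the cubic and Lipschitz bounds explicitly via $H^1\hookrightarrow L^\infty$ on each edge, whereas the paper invokes the algebra property of $E^1$ and defers the contraction step to \cite[Lemma 1]{LassasLiimatainenPotenciano-MachadoTyni}.
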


\begin{proof}
The argument parallels the proof of \cite[Lemma 1]{LassasLiimatainenPotenciano-MachadoTyni}. The only difference is that \cite[Lemma 1]{LassasLiimatainenPotenciano-MachadoTyni} assumes $s \in \mathbb{N}$, whereas here $s=0$. That assumption is used to guarantee that $E^{s+1}$ is an algebra, which remains true in our setting. All other steps are identical.  

Assume $F \in L^1([0,T];L^2(\Omega))$ and $f \in H^{3/2}([0,T];\mathbb{R}^{|\Gamma|})$ be such that $f = 0$ near $\{t=0\}\cup\{t=T\}$. By Proposition \ref{Prop_linear_energy_est}, equation \eqref{lin_eq} admits a unique solution $u \in E^1$.  

For fixed $f$, define the source-to-solution map
\begin{equation*}
   S : E^1 \cap \{F\arrowvert_{t=0} = 0\} \longrightarrow E^1 \cap \{F\arrowvert_{t=0} = 0\},
\end{equation*}
which assigns to $F$ the solution $u$ of equation \eqref{lin_eq}. Next, set
\begin{equation*}
   \Theta : B_\rho(0) \cap \{F\arrowvert_{t=0} = 0\} \longrightarrow B_\rho(0) \cap \{F\arrowvert_{t=0}=0\}, 
   \qquad \Theta u := S(au^3),
\end{equation*}
with $\rho>0$ to be chosen later. Since $E^1$ is an algebra (e.g. \cite{AF}), $\Theta$ is well defined. By \eqref{est_energy_linear_case},
\begin{multline*}
   \|\Theta u\|_{E^1} 
   \leq C_T \bigl(\|f\|_{H^{3/2}([0,T];\mathbb{R}^{|\Gamma|})} + \|au^3\|_{L^1([0,T]; L^2(\Omega))}\bigr) \\
   \leq C_T \bigl(\|f\|_{H^{3/2}([0,T];\mathbb{R}^{|\Gamma|})} + \|au^3\|_{E^0}\bigr) 
   \leq C_T\bigl(\kappa + \|a\|_{C(\Omega\times[0,T])}\rho^3\bigr).
\end{multline*}
Therefore, for sufficiently small $\kappa$ and $\rho$, $\Theta$ defines a self-map of $B_\rho(0)$.

The remaining part of the proof is a repetition of the steps in \cite[Lemma 1]{LassasLiimatainenPotenciano-MachadoTyni}, with \cite[estimate (16)]{LassasLiimatainenPotenciano-MachadoTyni} replaced by \eqref{est_energy_linear_case}. 
\end{proof}

\begin{proposition} \label{prop-diff-epsilon}
Let $q\in L^\infty(\Omega)$, $L>0$, and $a\in C^1(\Omega\times [0,T])$ with $\|a\|_{C^1(\Omega\times [0,T])}<L$. Then there exist $\kappa,\rho>0$ such that for any $f_j\in H^{3/2}([0,T];\mathbb{R}^{|\Gamma|})$ and $\varepsilon_j\in\mathbb{R}$, $j=1,2,3$, satisfying 
\begin{equation*}
    \|\varepsilon_1 f_1+\varepsilon_2 f_2+\varepsilon_3 f_3\|_{H^{3/2}([0,T];\mathbb{R}^{|\Gamma|})}\leq \kappa,
    \qquad 
    f_j = 0 \text{ near } \{t=0\}\cup\{t=T\},
\end{equation*}
the equation \eqref{non_linear_eq} with boundary input $f=\varepsilon_1 f_1+\varepsilon_2 f_2+\varepsilon_3 f_3$ admits a unique solution $u$ in the ball $B_\rho(0)$. Moreover,
\begin{equation*}
    \|u\|_{E^1}\leq C_{L,T}\|f\|_{H^{3/2}([0,T];\mathbb{R}^{|\Gamma|})}.
\end{equation*}

The solution admits the expansion
\begin{equation*}
    u=\varepsilon_1 v_1+\varepsilon_2 v_2+\varepsilon_3 v_3
    +\sum_{\substack{k_1+k_2+k_3=3 \\ k_j \geq 0}}
    \begin{pmatrix}
        3\\k_1,k_2,k_3
    \end{pmatrix}
    \varepsilon_1^{k_1}\varepsilon_2^{k_2}\varepsilon_3^{k_3}
    w_{k_1,k_2,k_3}
    +\mathcal{R},
\end{equation*}
where, for $j=1,2,3$, each $v_j$ solves
\begin{equation*}
\begin{cases}
(\partial_t^2-\partial_x^2)v_j+qv_j=0, & \text{in }(\Omega\setminus V)\times (0,T),\\
v_j=f_j, & \text{on } \Gamma\times(0,T),\\
v_j|_{t\leq 0}=0, & \text{in } \Omega,\\
v_j \text{ satisfies the Kirchhoff-Neumann condition on } V\setminus\Gamma,
\end{cases}
\end{equation*}
and for $k_j\in\{0,1,2,3\}$, each $w_{k_1,k_2,k_3}$ solves
\begin{equation*}
\begin{cases}
(\partial_t^2-\partial_x^2)w_{k_1,k_2,k_3}+q w_{k_1,k_2,k_3}
+av_1^{k_1}v_2^{k_2}v_3^{k_3}=0, & \text{in }(\Omega\setminus V)\times (0,T),\\
w_{k_1,k_2,k_3}=0, & \text{on } \Gamma\times(0,T),\\
w_{k_1,k_2,k_3}|_{t\leq 0}=0, & \text{in } \Omega,\\
w_{k_1,k_2,k_3} \text{ satisfies the Kirchhoff-Neumann condition on } V\setminus\Gamma,
\end{cases}
\end{equation*}
with remainder bounded by
\begin{equation*}
    \|\mathcal{R}\|_{E^1}\leq C_T \|a\|_{E^1}^2  \|f\|_{H^{3/2}([0,T];\mathbb{R}^{|\Gamma|})}^5.
\end{equation*}
\end{proposition}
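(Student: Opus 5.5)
The plan is to obtain the expansion by combining the fixed-point solution of Lemma \ref{lemma_for_direct_problem} with the linear energy estimate \eqref{est_energy_linear_case}, treating the cubic term perturbatively.

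\textbf{Existence and the a priori bound.} Since $\|f\|_{H^{3/2}}\leq\kappa$ with $f=\varepsilon_1f_1+\varepsilon_2f_2+\varepsilon_3f_3$ vanishing near $t=0,T$, Lemma \ref{lemma_for_direct_problem} applies verbatim. It gives a unique solution $u\in B_\rho(0)$ with $\|u\|_{E^1}\leq C_{L,T}\|f\|_{H^{3/2}}$. Nothing new is needed here.

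\textbf{Defining the linear and cubic parts.} Set $v:=\varepsilon_1v_1+\varepsilon_2v_2+\varepsilon_3v_3$, so by linearity $v$ is the solution of \eqref{lin_eq} with data $(f,F=0)$. Let $w$ solve \eqref{lin_eq} with zero boundary data and source $F=-av^3$. Expanding $v^3$ by the multinomial theorem gives
\[
v^3=\sum_{k_1+k_2+k_3=3}\binom{3}{k_1,k_2,k_3}\varepsilon_1^{k_1}\varepsilon_2^{k_2}\varepsilon_3^{k_3}v_1^{k_1}v_2^{k_2}v_3^{k_3}.
\]
By linearity of the solution map, $w$ is exactly the stated sum of the $w_{k_1,k_2,k_3}$. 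Each $w_{k_1,k_2,k_3}$ is well defined in $E^1$ by Proposition \ref{Prop_linear_energy_est}, because $E^1$ is an algebra and so $av_1^{k_1}v_2^{k_2}v_3^{k_3}\in E^0\subset L^1([0,T];L^2)$.

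\textbf{Remainder.} Define $\mathcal R:=u-v-w$. Then $\mathcal R$ has zero boundary and initial data and solves
\[
(\partial_t^2-\partial_x^2+q)\mathcal R=-a(u^3-v^3).
\]
Energy estimate \eqref{est_energy_linear_case} gives $\|\mathcal R\|_{E^1}\leq C\|a(u^3-v^3)\|_{L^1L^2}$. Write $u-v=w+\mathcal R$ and factor
\[
u^3-v^3=(u-v)(u^2+uv+v^2).
\]
The algebra property of $E^1$ (a sup-norm bound for $H^1$ in one dimension) gives
\[
\|u^3-v^3\|_{E^0}\leq C\|u-v\|_{E^1}\bigl(\|u\|_{E^1}+\|v\|_{E^1}\bigr)^2.
\]
The two factors are bounded as follows. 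First, $\|u\|_{E^1},\|v\|_{E^1}\leq C\|f\|$. Second, $u-v$ solves the linear problem with source $-au^3$, so $\|u-v\|_{E^1}\leq C\|a\|\,\|u\|_{E^1}^3\leq C\|a\|\,\|f\|^3$. Multiplying gives $\|\mathcal R\|_{E^1}\leq C\|a\|^2\|f\|^5$, with the $\|a\|$ factors measured in sup norm. The sup norm is controlled by $\|a\|_{C^1}$; to match the stated norm $\|a\|_{E^1}$, I would note that $\|a\|_{C}\leq C\|a\|_{E^1}$ by one-dimensional Sobolev embedding.

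The only delicate point is bookkeeping: one must estimate $u-v$ directly, rather than $\mathcal R$ in terms of itself, so that no smallness or absorption argument is required. Every step then reduces to \eqref{est_energy_linear_case} together with the algebra property.
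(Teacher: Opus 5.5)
Your proposal is correct and follows essentially the same route as the paper. The paper likewise first bounds $\mathcal{F}=u-\sum_j\varepsilon_j v_j$, which has source $-au^3$, by $C_T\|a\|_{E^1}\|f\|^3$; it then writes the source of $\mathcal{R}$ as $-a\,\mathcal{F}\,(u^2+uv+v^2)$ and applies \eqref{est_energy_linear_case} once more. The only cosmetic difference is that you measure the sources in $E^0$ using the one-dimensional embedding $H^1\subset L^\infty$, whereas the paper bounds them more crudely in the $E^1$ norm via the algebra property, which is why $\|a\|_{E^1}$ appears in the stated bound.
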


\begin{proof}
The argument follows from \cite[Proposition 2]{LassasLiimatainenPotenciano-MachadoTyni}.  
Existence, uniqueness, and the first estimate are given by Lemma \ref{lemma_for_direct_problem}.  
We now prove the remaining claims.  

Note that $\mathcal{F} = u - \varepsilon_1 v_1 - \varepsilon_2 v_2 - \varepsilon_3 v_3$ satisfies
\begin{equation*}
\begin{cases}
(\partial_{t}^2 -\partial_x^2 ) \mathcal{F}+q\,\mathcal{F}
=-au^3, &\text{in } (\Omega\setminus V)\times (0,T),\\
\mathcal{F}\arrowvert_{\Gamma}=0, & \text{on }\Gamma\times(0,T),\\
\mathcal{F}\arrowvert_{t \leq 0}=0, & \text{in }\Omega, \\
\mathcal{F} \text{ subject to the Kirchhoff-Neumann condition on } V\setminus\Gamma.
\end{cases}
\end{equation*}
By Proposition \ref{Prop_linear_energy_est} and the algebra property of $E^1$, we obtain
\begin{multline}\label{est_F}
    \|\mathcal{F}\|_{E^1} 
    \leq   C_T \|au^3\|_{L^1([0,T]; L^2(\Omega))} \leq \sup_{t\in [0,T]} \|au^3(\cdot, t)\|_{H^1(\Omega)}\\
    \leq C_T \|au^3\|_{E^1}
    \leq C_T \|a\|_{E^1}\|u\|_{E^1}^3
    \leq C_T \|a\|_{E^1}\|f\|_{H^{3/2}([0,T];\mathbb{R}^{|\Gamma|})}^3.
\end{multline}

Next, consider the remainder term
\begin{equation*}
    \mathcal{R}
    = \mathcal{F}
    - \sum_{\substack{k_1+k_2+k_3=3 \\ k_j \geq 0}}
    \begin{pmatrix} 3\\ k_1,k_2,k_3 \end{pmatrix}
    \varepsilon_1^{k_1}\varepsilon_2^{k_2}\varepsilon_3^{k_3}
    w_{k_1,k_2,k_3}.
\end{equation*}
It satisfies
\begin{equation}\label{eq_R}
\begin{cases}
(\partial_{t}^2 -\partial_x^2 ) \mathcal{R}+q(x)\mathcal{R}
= -au^3 + a\big(\varepsilon_1 v_1+\varepsilon_2 v_2+\varepsilon_3 v_3\big)^3, & \text{in }(\Omega\setminus V)\times (0,T),\\
\mathcal{R}\arrowvert_{\Gamma}=0, & \textrm{on }\Gamma\times(0,T),\\
\mathcal{R}\arrowvert_{t \leq 0}=0, & \textrm{in }\Omega, \\
\mathcal{R} \text{ subject to the Kirchhoff-Neumann condition on } V\setminus\Gamma.
\end{cases}
\end{equation}
Applying Proposition \ref{Prop_linear_energy_est}, estimate \eqref{est_F}, and the algebra property of $E^1$, we obtain
\begin{align}\label{arg_est_R}
    \nonumber\|\mathcal{R}\|_{E^1} 
    &\leq C_T \|au^3 + a(\varepsilon_1 v_1+\varepsilon_2 v_2+\varepsilon_3 v_3)^3\|_{E^1}\\
    &\leq C_T \|a\|_{E^1}\|\mathcal{F}\|_{E^1}
        \|u^2 + u(\varepsilon_1 v_1+\varepsilon_2 v_2+\varepsilon_3 v_3) + (\varepsilon_1 v_1+\varepsilon_2 v_2+\varepsilon_3 v_3)^2\|_{E^1}\\
    \nonumber&\leq C_T \|a\|_{E^1}^2 \|f\|_{H^{3/2}([0,T];\mathbb{R}^{|\Gamma|})}^5.
\end{align}
This completes the proof.
\end{proof}

The following proposition justifies the integral identity \eqref{a-integral} used in Section \ref{sec-linearization}.

\begin{proposition}\label{prop-linearization-identity}
Let the assumptions and notation of Proposition~\ref{prop-diff-epsilon} hold. 
Assume additionally that $f_j \in H^{3+1/2}([0,T]; \mathbb{R}^{|\Gamma|})$ for $j=1,2,3$.
Let $h\in H^{3 + 1/2}([0,T]; \mathbb{R}^{|\Gamma|})$ be compactly supported in $(0,T)$ with sufficiently small $H^{3/2}$ norm, and let $v^h$ be the solution of 
\begin{equation*}
    \begin{cases}
    (\partial_{t}^2 -\partial_x^2 ) v(x,t)+q(x) v(x,t)=0, &x\in \Omega\setminus V, t \in (0,T),\\
    v(\cdot,t)|_{\Gamma}=h(t), & t\in (0,T),\\
    v(x,t)|_{t \geq T}=0, & x\in \Omega, \\
    v \textrm{ subject to the Kirchhoff--Neumann condition on } V\setminus\Gamma.
    \end{cases}
\end{equation*}
Then 
\begin{equation}\label{eq-linearization-identity}
    - \int_0^T \int_{\Gamma} h \partial_{\varepsilon_1} \partial_{\varepsilon_2}\partial_{\varepsilon_3}\Lambda(\varepsilon_1f_1 + \varepsilon_2f_2 + \varepsilon_3f_3)\big|_{\varepsilon_1=\varepsilon_2=\varepsilon_3 = 0} = 6\int_0^T\int_\Omega a v^h v_1 v_2 v_3.
\end{equation}
\end{proposition}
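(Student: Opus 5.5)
The plan is to identify the third-order derivative of $\Lambda$ with the Neumann trace of a single function $w$, and then obtain the identity from one integration by parts of $w$ against $v^h$ over $\Omega\times(0,T)$.

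\textbf{Step 1: identifying $\partial_{\varepsilon_1}\partial_{\varepsilon_2}\partial_{\varepsilon_3}\Lambda$.} Let $u_\varepsilon$ be the solution with boundary input $f=\varepsilon_1 f_1+\varepsilon_2 f_2+\varepsilon_3 f_3$, and use the expansion of Proposition \ref{prop-diff-epsilon}. The only term containing the monomial $\varepsilon_1\varepsilon_2\varepsilon_3$ is $6\,w_{1,1,1}$. The remainder satisfies $\|\mathcal R\|_{E^1}=O(|\varepsilon|^5)$, and the linear terms are killed by the mixed derivative. Hence
\[
\partial_{\varepsilon_1}\partial_{\varepsilon_2}\partial_{\varepsilon_3}u_\varepsilon|_{\varepsilon=0}=6w_{1,1,1}=:w \quad\text{in } E^1 .
\]
To pass to Neumann traces I will not differentiate traces pointwise. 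Instead I pair with $h$ and express $\int_0^T\int_\Gamma h\,\partial u$ through the weak (Green) identity, which involves only $E^1$ quantities. This gives
\[
\int_0^T\!\!\int_\Gamma h\,\partial\Lambda\text{-derivative}=\int_0^T\!\!\int_\Gamma h\,\partial w .
\]
The function $w$ solves $(\partial_t^2-\partial_x^2+q)w=-6av_1v_2v_3$ with zero Dirichlet data and zero initial data, and it satisfies the Kirchhoff--Neumann conditions.

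\textbf{Step 2: regularity.} The source $F=-6av_1v_2v_3$ must satisfy the hypotheses \eqref{add_assumptions} of Proposition \ref{Prop_linear_energy_est}.
\begin{itemize}
\item Since $f_j\in H^{3+1/2}$ vanish near $t=0$, the $v_j$ are smooth enough: apply the last part of Proposition \ref{Prop_linear_energy_est} to $v_j$ and to $\partial_t v_j$, the latter having data $f_j'$.
\item This puts $v_1v_2v_3\in C^1([0,T];L^2)$, using that $E^1$ is an algebra and that $a\in C^1$.
\item $F$ vanishes at $t=0$.
\end{itemize}
Therefore $w(\cdot,t)|_e\in H^2$ for all $t$. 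Applying the same argument to $v^h$ with time reversed gives $v^h(\cdot,t)|_e\in H^2$.

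\textbf{Step 3: integration by parts.} Compute
\[
\int_0^T\!\!\int_\Omega \bigl[(\partial_t^2-\partial_x^2+q)w\bigr]v^h-w\bigl[(\partial_t^2-\partial_x^2+q)v^h\bigr].
\]
\begin{itemize}
\item The time boundary terms vanish: $w$ and $\partial_t w$ are zero at $t=0$, and $v^h$ and $\partial_t v^h$ are zero at $t=T$.
\item On each edge, integrating by parts in $x$ gives the vertex sums $\sum_v\sum_{j\in J(v)}(\partial w_j\,v^h_j-w_j\,\partial v^h_j)(v)$.
\item At interior vertices these sums vanish: by continuity of $v^h$ and $w$ the common values factor out, and the Kirchhoff--Neumann condition makes the sum of outward derivatives zero.
\item At leaves, $w=0$ and $v^h=h$, so the boundary contribution is $\pm\int_0^T\int_\Gamma h\,\partial w$.
\end{itemize}
The left side equals $-6\int_0^T\int_\Omega av_1v_2v_3v^h$ because the second bracket is zero. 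Tracking the outward-derivative sign convention then yields \eqref{eq-linearization-identity}. For the formal computation I would first work with smooth approximations, or rely on the $H^2$-in-space regularity of Step 2 together with the $E^1$ regularity in time; the $t$-integration by parts is then justified in $H^{-1}$ duality.

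\textbf{Main obstacle.} The delicate point is Step 1: justifying that the third $\varepsilon$-derivative commutes with the Neumann trace defining $\Lambda$. $E^1$ control alone does not give traces of $\partial_x u$. My approach is to define the paired quantity $\int h\,\partial u$ by the same Green identity used in Step 3, applied to $u_\varepsilon$ against $v^h$. This expresses it as a bulk integral $-\int a u_\varepsilon^3 v^h$, which is continuous in $E^1$. Differentiating under the integral then gives the result directly, using the expansion and the $O(|\varepsilon|^5)$ remainder bound.
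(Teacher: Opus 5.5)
Your proposal is correct and is essentially the paper's argument: you pair the Neumann data with $h$ via Green's identity against $v^h$, so that the boundary pairing becomes the bulk integral $-\int a u_\varepsilon^3 v^h$ plus a term linear in $\varepsilon$, and you use the expansion of Proposition~\ref{prop-diff-epsilon} to isolate $6w_{1,1,1}$, with your Step 2 supplying the $H^2$ regularity (from the $H^{3+1/2}$ hypotheses and the last part of Proposition~\ref{Prop_linear_energy_est}) that the paper's integration by parts uses only implicitly. The paper takes the limit through the finite-difference quotient $D^3_{\varepsilon_1,\varepsilon_2,\varepsilon_3}$ rather than through derivatives, which is the cleaner way to use the $O(|\varepsilon|^5)$ remainder bound: that bound alone does not force the mixed third derivative of $\mathcal R$ to vanish unless you also know that $\varepsilon\mapsto u_\varepsilon\in E^1$ is $C^3$ (e.g.\ by the implicit function theorem in the algebra $E^1$).
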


\begin{proof}
We recall the finite difference operator: for a function $g$ depending on $\varepsilon = (\varepsilon_1,\varepsilon_2,\varepsilon_3)$,
\begin{equation*}
    D^3_{\varepsilon_1,\varepsilon_2,\varepsilon_3}\big|_{\varepsilon = 0}  g(\varepsilon_1,\varepsilon_2,\varepsilon_3)
    = \frac{1}{\varepsilon_1\varepsilon_2\varepsilon_3}\sum_{\sigma\in\{0,1\}^3} (-1)^{3+|\sigma|} g(\sigma_1 \varepsilon_1,\sigma_2 \varepsilon_2,\sigma_3 \varepsilon_3).
\end{equation*}
By \cite[Lemma 10]{LassasLiimatainenPotenciano-MachadoTyni} and the expansion in Proposition~\ref{prop-diff-epsilon},
\begin{equation*}
    D^3_{\varepsilon_1,\varepsilon_2,\varepsilon_3} (\partial_{t}^2 -\partial_x^2 + q)u = 6  (\partial_{t}^2 -\partial_x^2 + q) w_{1,1,1} + D^3_{\varepsilon_1,\varepsilon_2,\varepsilon_3} (\partial_{t}^2 -\partial_x^2 + q)\mathcal{R}.
\end{equation*}
Integration by parts against $v^h$ gives
\begin{multline*}
     - \int_0^T \int_{\Gamma} v^h D^3_{\varepsilon_1,\varepsilon_2,\varepsilon_3} \Lambda(\varepsilon_1f_1 + \varepsilon_2f_2 + \varepsilon_3f_3)\\
     = 6\int_0^T\int_\Omega v^h (\partial_{t}^2 -\partial_x^2 + q) w_{1,1,1} + \int_0^T\int_\Omega v^h  D^3_{\varepsilon_1,\varepsilon_2,\varepsilon_3} (\partial_{t}^2 -\partial_x^2 + q)\mathcal{R}.
\end{multline*}
By recalling that $(\partial_t^2 - \partial_x^2 + q) w_{1,1,1} = -a v_1 v_2 v_3$ and integrating by parts once more, we obtain
\begin{multline*}
     - \int_0^T \int_{\Gamma} h  D^3_{\varepsilon_1,\varepsilon_2,\varepsilon_3} \Lambda(\varepsilon_1f_1 + \varepsilon_2f_2 + \varepsilon_3f_3)\\
     = 6\int_0^T\int_\Omega a  v^h v_1 v_2 v_3 + \int_0^T\int_\Omega v^h  D^3_{\varepsilon_1,\varepsilon_2,\varepsilon_3} (\partial_{t}^2 -\partial_x^2 + q)\mathcal{R}.
\end{multline*}
It remains to show that the remainder term vanishes in the limit $\varepsilon_j \to 0$.
Since 
\begin{equation*}
    (\partial_{t}^2 -\partial_x^2 + q)\mathcal{R} = -au^3 + a(\varepsilon_1 v_1+\varepsilon_2 v_2+\varepsilon_3 v_3)^3,
\end{equation*}
the estimate \eqref{arg_est_R} implies
\begin{equation*}
    \big\|D^3_{\varepsilon_1,\varepsilon_2,\varepsilon_3} (\partial_{t}^2 -\partial_x^2 + q)\mathcal{R}\big\|_{E^0} \leq   \frac{C_T}{\varepsilon_1\varepsilon_2\varepsilon_3} \|a\|_{E^1}^2 \|\varepsilon_1f_1 + \varepsilon_2f_2 + \varepsilon_3f_3\|_{H^{3/2}([0,T];\mathbb{R}^{|\Gamma|})}^5.
\end{equation*}
Therefore, taking $\varepsilon_j \rightarrow 0$, $j=1,2,3$, we obtain \eqref{eq-linearization-identity}.
\end{proof}

\section{Acknowledgments}
The research of S.A. was  supported  in part by the National Science Foundation, grant DMS 2308377, and by the Ministry of Education and Science of the Russian Federations part of the program of the Moscow Center for Fundamental and Applied Mathematics under the Agreement No. 075-15-2025-345. The most part of this work was done during the visits of S.A. to the University of Helsinki. He is very grateful to the Department of Mathematics and Statistics of the University of Helsinki for its hospitality.
M.L. and J.L. were supported by the PDE-Inverse project of the European Research Council of the European Union, project 
101097198, and the Research Council of Finland, grants 273979 and 284715. L.O. and M.N. were supported by the European Research Council of the European Union, grant 101086697 (LoCal), and the Research Council of Finland, grants 347715, 353096 (Centre of Excellence of Inverse Modelling and Imaging) and 359182 (Flagship of Advanced Mathematics for Sensing Imaging and Modelling). Views and opinions expressed are those of the authors only and do not necessarily reflect those of the European Union or the other funding organizations.

\end{document}